\newtheorem{theorem}{Theorem}[section]
\newtheorem{lemma}[theorem]{Lemma}
\newtheorem{corollary}[theorem]{Corollary}
\newtheorem{definition}[theorem]{Definition}
\newtheorem{assumption}[theorem]{Assumption}
\newtheorem{observation}[theorem]{\textbf{Observation}}
\newtheorem{remark}[theorem]{Remark}
\newcommand{\hpk}{\hat{\Phi}_k}
\newcommand{\txkm}{\tilde{x}_{k,t-1}}
\newcommand{\txk}{\tilde{x}_{k,t}}
\newcommand{\cX}{\mathcal{X}}
\newcommand{\cE}{\chi}
\newcommand{\RR}{\mathbb{R}}
\newcommand{\email}[1]{\texttt{#1}}
\title{Efficient First Order Method for Saddle Point Problems with Higher Order Smoothness\thanks{The first two authors equally contribute to this paper.}}
\author{Nuozhou Wang\thanks{Department of Industrial \& Systems Engineering, University of Minnesota
  (\email{wang9886@umn.edu}).} 
\and Junyu Zhang\thanks{Department of Industrial Systems Engineering and Management, National University of Singapore
  (\email{junyuz@nus.edu.sg}). }
\and Shuzhong Zhang\thanks{Department of Industrial \& Systems Engineering, University of Minnesota
  (\email{zhangs@umn.edu}).}}
\begin{document}

\maketitle

\begin{abstract}
This paper studies the complexity of finding approximate stationary points for the smooth nonconvex-strongly-concave (NC-SC) saddle point problem: $\min_x\max_yf(x,y)$. Under the standard first-order smoothness conditions where $f$ is $\ell$-smooth in both arguments and $\mu_y$-strongly concave in $y$, existing literature shows that the optimal complexity for first-order methods to obtain an $\epsilon$-stationary point is $\tilde{O}\big(\sqrt{\kappa_y}\ell\epsilon^{-2}\big)$, where $\kappa_y=\ell/\mu_y$ is the condition number. However, when $\Phi(x):=\max_y f(x,y)$ has $L_2$-Lipschitz continuous Hessian in addition, we derive a first-order algorithm with an $\tilde{O}\big(\sqrt{\kappa_y}\ell^{1/2}L_2^{1/4}\epsilon^{-7/4}\big)$ complexity by designing an accelerated proximal point algorithm enhanced with the “Convex Until Proven Guilty” technique. Moreover, an improved $\Omega\big(\sqrt{\kappa_y}\ell^{3/7}L_2^{2/7}\epsilon^{-12/7}\big)$ lower bound for first-order method is also derived for sufficiently small $\epsilon$. As a result, given the second-order smoothness of the problem, the complexity of our method improves the state-of-the-art result by a factor of $\tilde{O}\big(\big(\frac{\ell^2}{L_2\epsilon}\big)^{1/4}\big)$, while almost matching the lower bound except for a small $\tilde{O}\big(\big(\frac{\ell^2}{L_2\epsilon}\big)^{1/28}\big)$ factor.
\end{abstract}

\section{Introduction}
\label{sec:intro}
In this work, we consider the smooth nonconvex-strongly-concave (NC-SC)  saddle point problem 
\begin{equation}
    \label{prob:minimax}
    \min _{x\in\mathbb{R}^{n}} \max _{y\in\mathbb{R}^{n'}} f(x, y),
\end{equation} 
where $f(x, y)$ is $\ell$-smooth, $\mu_y$-strongly concave in $y$, but possibly nonconvex in $x$. Such problems are widely encountered in generative model \cite{goodfellow2020generative}, robustness in adversarial training \cite{sinha2017certifying}, distributionally robust regression \cite{shafieezadeh2015distributionally}, classification with fairness \cite{nouiehed2019solving}, and so on. For NC-SC problems, the common goal is to find  
an $\epsilon$-stationary point $\bar x$ for the primal problem, namely, 
\(\|\nabla \Phi(\bar x)\|\leq \epsilon\)
with $\Phi(x):=\max_y f(x,y)$. To obtain an $\epsilon$-stationary point, \cite{lin2020gradient} proposed a two-time-scale gradient method with $O(\kappa_y^2\ell\epsilon^{-2})$ complexity, where $\kappa_y=\ell/\mu_y$ is the condition number of $f(x,\cdot)$ and the complexity is counted by the total number of gradient and function evaluations of the algorithm. By applying the inexact Proximal Point Algorithm (PPA) to solve the NC-SC problem while using Nesterov's acceleration to solve the subproblems, \cite{lin2020near} obtained an improved complexity of $\tilde{O}\big(\sqrt{\kappa_y}\ell\epsilon^{-2}\big)$, the same complexity is also achieved by \cite{zhang2021complexity} via the Catalyst scheme. In terms of the lower bound, under the standard first-order smoothness conditions, \cite{li2021complexity,zhang2021complexity} concurrently extended the hard instances in \cite{carmon2020lower,carmon2021lower} from nonconvex minimization to the NC-SC minimax setting, showing that $\Omega\big(\sqrt{\kappa_y}\ell\epsilon^{-2}\big)$ first-order oracles are required to find $\epsilon$-stationary points, which indicates the optimality of both \cite{lin2020near,zhang2021complexity}. 

Although the above results are tight for NC-SC problems that possess only gradient Lipschitz continuity, further improvements can still be expected for problems with higher-order smoothness. In \cite{luo2021finding}, a variant of the cubic regularized Newton's method was proposed to solve the NC-SC saddle point problem. To find an $\epsilon$-stationary point, \cite{luo2021finding} requires $\tilde{O}\big(\kappa_y^{1.5}\sqrt{\ell_2}\epsilon^{-1.5}\big)$ Hessian computations and $\tilde{O}\big(\kappa_y^2\sqrt{\ell_2}\epsilon^{-1.5}\big)$ gradient evaluations. Regardless of the Hessian evaluation and the complexity of solving cubic subproblems, \cite{luo2021finding} improves $\epsilon$-dependence of the first-order oracle complexity by a factor of $O(\epsilon^{-0.5})$.
Besides the research under the NC-SC setting, the general $p$-th order method \cite{bullins2022higher,lin2023monotone,lin2022perseus,jiang2022generalized} and the Quasi-Newton method \cite{asl2023j} have been considered for the monotone inequalities or convex-concave (C-C) setting, and \cite{chen2022cubic} considered the nonconvex-nonconcave (NC-NC) settings.

Despite the improved first-order oracle complexity, the expensive computation of higher-order oracles prevents the applications of higher-order methods in large-scale problems. Therefore, researchers have put significant efforts to designing first-order methods that can exploit high-order smoothness without requiring higher-order oracles. If we ignore the inner maximization structure of $\Phi(x)$  and consider merely the nonconvex optimization problem: $\min_{x} \Phi(x)$, several papers have achieved this purpose \cite{agarwal2016finding,carmon2018accelerated,carmon2017convex} with $\tilde{O}(\epsilon^{-7/4})$ complexity for first-order methods. Hence, a straightforward attempt to exploit the higher-order smoothness of NC-SC problems is to apply \cite{agarwal2016finding,carmon2018accelerated,carmon2017convex} to minimizing $\Phi$ while computing $\nabla \Phi$ via the Danskin's theorem. By the $O(\kappa_y\ell)$-smoothness of $\Phi$, it requires $\Tilde{O}\big(\sqrt{\kappa_y}\ell^{1/2}L_2^{1/4}\epsilon^{-7/4}\big)$ evaluations of $\nabla \Phi$ to find an $\epsilon$-stationary point, while each approximate evaluation of $\nabla \Phi(x)$ requires $\Tilde{O}\big(\sqrt{\kappa_y}\ln\epsilon^{-1}\big)$ computation of $\nabla_y f(x,\cdot)$. Overall, an $\tilde{O}\big(\kappa_y\ell^{1/2}L_2^{1/4}\epsilon^{-7/4}\big)$ complexity is achieved. On the other hand, when studying the lower bound of NC-SC problems with second-order smoothness, the hard instance we construct indicates that $\Omega\big(\sqrt{\kappa_y}\ell^{3/7}L_2^{2/7}\epsilon^{-12/7}\big)$ gradient evaluations are required for first-order methods, for sufficiently small $\epsilon$ (Section \ref{lower}). Compared to the best-known $\tilde{O}\big(\sqrt{\kappa_y}\ell\epsilon^{-2}\big)$ complexity \cite{lin2020near,zhang2021complexity}, our lower bound indicates a clear $\tilde{O}(\epsilon^{-2/7})$ sub-optimality gap in the existing results as $\epsilon\to0$. For finite $\epsilon$, the upper bound \cite{lin2020near,zhang2021complexity} differs from our lower bound by a factor of $O((\ell^2/\epsilon L_2)^{2/7})$. In the accuracy regime $\epsilon<O\big(\frac{\ell^2}{L_2}\big)$ where the lower bound becomes effective, in other words, the accuracy regime where the second-order smoothness starts to dominate the complexity, this sub-optimality gap is strictly greater than 1. On the contrary, the upper bound by naively applying \cite{agarwal2016finding,carmon2018accelerated,carmon2017convex} differs from \cite{lin2020near,zhang2021complexity} by a factor of $O((\ell^2/\epsilon L_2\kappa_y^2)^{1/4})$, which is a strict improvement only when $\epsilon<O(\frac{\ell^2}{L_2\kappa_y^2})$. Therefore, though the second-order smoothness dominates the optimal complexity in the accuracy regime $\Omega(\frac{\ell^2}{L_2\kappa_y^2})<\epsilon<O(\frac{\ell^2}{L_2})$, naively applying the existing results cannot exploit this benefit.

To resolve this issue, we first observe a weakly convex structure of $\phi$ introduced by the inner maximization of \eqref{prob:minimax}. Then, to exploit the weak-convexity, we carefully craft a monitored inexact proximal point algorithm framework based on the ``(assuming) convex until proven guilty'' (CUPG) \cite{carmon2017convex} technique. With an accelerated subproblem solver, our method achieves an $\tilde{O}\big(\sqrt{\kappa_y}\ell^{1/2}L_2^{1/4}\epsilon^{-7/4}\big)$ complexity, which improves the state-of-the-art result \cite{lin2020near,zhang2021complexity} by an $\tilde{O}(\epsilon^{-1/4})$ factor as $\epsilon\to0$. For finite $\epsilon$ in the effective accuracy regime where second-order smoothness dominates, our result is suboptimal by a small factor of $\tilde{O}\big(\big(\frac{\ell^2}{L_2\epsilon}\big)^{1/28}\big)$ compared to the our lower bound, while achieving an $\tilde{O}\big(\big(\frac{\ell^2}{L_2\epsilon}\big)^{1/4}\big)$ improvement compared to \cite{lin2020near,zhang2021complexity}. \vspace{0.2cm}

\newpage
\noindent\textbf{Contributions.} We summarize the contribution of this paper as follows. \vspace{0.1cm}
\begin{itemize}[leftmargin=0.8cm]
    \item Restricted by the gradient descent structure, the original CUPG method \cite{carmon2017convex} is unable to exploit the weak-convexity of $\Phi$ introduced by the minimax structure of \eqref{prob:minimax}. To resolve this issue, we design an \underline{I}nexact \underline{AP}PA \underline{U}ntil \underline{N}onconvexity (IAPUN) algorithm, which is an inexact accelerated proximal point variant of CUPG that allows inexact first-order evaluations. The inexact nature of our method further relaxes the need for exact function and gradient evaluations, which is crucial in the original CUPG for the progress checking purpose.
    \item We apply the IAPUN method to the deterministic NC-SC minimax problem \eqref{prob:minimax} and achieve an $\tilde{O}\big(\sqrt{\kappa_y}\ell^{1/2}L_2^{1/4}\epsilon^{-7/4}\big)$ complexity by exploiting the weak-convexity of $\Phi$. Due to our ability to handle inexactness, we derive a variance reduced version of our method to obtain an $\tilde{O}\big(\big(m+m^{\frac{3}{4}} \sqrt{\kappa_y}\big)\ell^{1/2}L_2^{1/4} \epsilon^{-7/4}\big)$ sample complexity for the finite-sum setting with $m$ component functions, improving its deterministic counterpart by a factor of $O(m^{-1/4})$.
    \item For the NC-SC minimax problems with second-order smoothness, we derive an $\Omega\big(\sqrt{\kappa_y}\ell^{3/7}L_2^{2/7}\epsilon^{-12/7}\big)$ lower bound for first-order methods when $\epsilon$ is small enough, proving an almost negligible $\tilde{O}\big(\big(\frac{\ell^2}{L_2\epsilon}\big)^{1/28}\big)$ sub-optimality gap of our algorithm.  It is also worth noting our hard instance can be easily extended to provide lower bounds for NC-SC problems with general $p$-th order smoothness.  
\vspace{0.1cm}
\end{itemize} 
Besides, we also apply the IAPUN approach to linearly constrained minimax problems and bilevel optimization problems and derive new convergence and complexity results. In particular, our $\tilde{O}(\epsilon^{-7/4})$ complexity for bilevel optimization was 
also derived in a parallel work \cite{yang2023accelerating} which was released 2 months after ours.\vspace{0.2cm}\\
\noindent\textbf{Related Works.} This paper is closely related to several important topics in nonconvex optimization, minimax optimization, and bilevel optimization problems, where the related works are listed below according to their topics. 

\paragraph{Nonconvex optimization} For smooth nonconvex optimization with gradient Lipschitz continuity, it is long known that the standard gradient descent method achieves the optimal $O(\epsilon^{-2})$ complexity \cite{bertsekas1997nonlinear,carmon2020lower} for first-order method. Given access to higher-order oracles, under appropriate continuity conditions, \cite{nesterov2006cubic} derived a cubic regularized Newton method with $O(\epsilon^{-3/2})$ oracle complexity, and is further extended to a general $p$-th order method an $O\big(\epsilon^{-(p+1)/p}\big)$ oracle complexity \cite{birgin2017worst}. Lower bounds for $p$-th order methods are shown in \cite{carmon2020lower}, proving the optimality of \cite{nesterov2006cubic,birgin2017worst}. Another line of research, including our work, has been focusing on exploiting higher-order smoothness with pure first-order methods. Several works have achieved the $\tilde{O}\big(\epsilon^{-7/4}\big)$ complexity under second-order smoothness \cite{agarwal2016finding,carmon2018accelerated,carmon2017convex}, which is sub-optimal by an almost negligible $\tilde{O}\big(\epsilon^{-1/28}\big)$ factor compared to the lower bound result \cite{carmon2021lower}.

\paragraph{Nonconvex minimax problem} 
There are two important settings under this topic, the NC-SC setting considered in this paper and the nonconvex-concave (NC-C) setting. Many papers have studied this setting \cite{lin2020gradient,lin2020near,lu2020hybrid,thekumparampil2019efficient,kong2021accelerated,boct2023alternating}, yet the lack of strong-concavity prevents the smoothness or even the existence of $\nabla\Phi$ and $\nabla^2\Phi$, which is out of the scope of this paper. For the NC-SC setting, several convergent first-order algorithms have been designed and analyzed \cite{lin2020gradient,lin2020near,lu2020hybrid,zhang2021complexity,boct2023alternating}. In particular, given first-order smoothness conditions, \cite{lin2020near,zhang2021complexity} have achieved an optimal complexity of $\tilde{O}\big(\sqrt{\kappa_y}\ell\epsilon^{-2}\big)$, matching the lower bound provided by \cite{zhang2021complexity}. The finite-sum NC-SC setting has also been studied in \cite{zhang2021complexity,luo2020stochastic}, where an $\tilde{O}\big(\!\big(m+\!\sqrt{\kappa_y}m^{\frac{3}{4}} \big) \epsilon^{-2}\big)$ sample complexity upper bound and an $\Omega\left(\!\big(m+\sqrt{\kappa_ym}\big) \epsilon^{-2}\right)$ lower bound are established in \cite{zhang2021complexity}. Though there is a rich bulk of literature on higher-order methods for saddle point problems \cite{luo2021finding,bullins2022higher,lin2023monotone,lin2022perseus,jiang2022generalized,chen2022cubic,huang2022cubic} as listed in the introduction, to our best knowledge, there is no existing first-order method that can harness the higher-order smoothness of the minimax problems without computing higher-order derivatives.

\paragraph{Bilevel optimization} This topic has various different setups, among which the most related one is the NC-SC setting. Under this setting, an $\tilde{O}(\epsilon^{-5/2})$ complexity was first derived in \cite{ghadimi2018approximation}, and was later improved to $\tilde{O}(\epsilon^{-2})$ in \cite{ji2021bilevel,chen2023near}. Given the third-order smoothness assumptions, \cite{yang2023accelerating} improved the complexity to $\tilde{O}(\epsilon^{-7/4})$ by applying an restarted acceleration method \cite{li2023restarted}. Besides, there are also several results on escaping saddle points of NC-SC bilevel optimization \cite{huang2022efficiently,chen2023near} with perturbed gradient method \cite{jin2017escape}, but we do not expand this discussion as it is beyond the scope of this paper.\vspace{0.2cm} 

\noindent\textbf{Outline.}  
In Section \ref{alg}, we describe the IAPUN framework for general weakly convex optimization problems and analyze its complexity for obtaining $\epsilon$-stationary points. And then, we discuss the consequence of IAPUN in both deterministic and finite-sum NC-SC minimax problems and obtain the improved complexities. In Section \ref{lower}, a hard NC-SC instance with second-order smoothness is constructed and an almost tight complexity lower bound is derived for the first-order methods. Finally, in Section \ref{application}, we discuss some further applications of IAPUN on linearly constrained minimax problems and bilevel optimization problems. We conclude the paper in Section \ref{conclusion}.

\section{IAPUN and its application to NC-SC problems} \label{alg}
\subsection{Fundamental assumptions}\label{subsec:prelim}
In this section, we formally state the basic assumptions of this paper, where we mainly focus on the NC-SC problems.  

\begin{assumption}\label{assu:minimax1}
The function $f$ is nonconvex and $\mu$-strongly concave. That is, $f(x,\cdot)$ is $\mu$-strongly concave in $y$ for $\forall x$ and $f(\cdot, y)$ may be nonconvex in $x$ for $\forall y$.
\end{assumption}
\begin{assumption}\label{assu:minimax2}
The function $\Phi(x):=\max_yf(x,y)$ is lower bounded. And the function $f$ is $\ell$-smooth, namely, $\nabla f$ is $\ell$-Lipschitz continuous in $(x,y)$.
\end{assumption}
As the NC-SC problem \eqref{prob:minimax} is equivalent to $\min_{x} \Phi(x)$, we introduce the following Lemma characterizes the first-order smoothness of $\Phi(x)$. 

\begin{lemma}\label{lemma:minimax}
Let $L_1\!=\!(1+\kappa_y)\ell$ and $\kappa_y\!=\!\ell/\mu$. Then, $\Phi$ is $L_1$-smooth and $\ell$-weakly convex under Assumptions \ref{assu:minimax1} and \ref{assu:minimax2}, where we say a function $g$ is $\gamma$-weakly convex ($\gamma>0$) if the sum of $g$ and a $\gamma$-strongly convex function is convex. 
\end{lemma}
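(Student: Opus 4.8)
The plan is to derive both properties from the strong concavity of $f(x,\cdot)$ together with Danskin's theorem. First I would observe that, since $f(x,\cdot)$ is $\mu$-strongly concave (hence has a unique maximizer) and $\nabla f$ is continuous, the map $y^*(x):=\arg\max_y f(x,y)$ is well defined and Danskin's theorem yields that $\Phi$ is differentiable with $\nabla\Phi(x)=\nabla_x f(x,y^*(x))$. The crux of the smoothness half is then to show that $x\mapsto y^*(x)$ is $\kappa_y$-Lipschitz. To do this I would use the first-order optimality condition $\nabla_y f(x,y^*(x))=0$, pair the difference $\nabla_y f(x_1,y^*(x_2))-\nabla_y f(x_1,y^*(x_1))$ against $y^*(x_2)-y^*(x_1)$, lower-bound the resulting inner product via $\mu$-strong concavity of $f(x_1,\cdot)$ and upper-bound it via the $\ell$-Lipschitz continuity of $\nabla f$ and Cauchy--Schwarz; cancelling a factor of $\|y^*(x_1)-y^*(x_2)\|$ gives $\|y^*(x_1)-y^*(x_2)\|\le(\ell/\mu)\|x_1-x_2\|=\kappa_y\|x_1-x_2\|$.

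Given this, the $L_1$-smoothness of $\Phi$ is a one-line estimate: $\|\nabla\Phi(x_1)-\nabla\Phi(x_2)\|=\|\nabla_x f(x_1,y^*(x_1))-\nabla_x f(x_2,y^*(x_2))\|\le \ell\big(\|x_1-x_2\|+\|y^*(x_1)-y^*(x_2)\|\big)\le \ell(1+\kappa_y)\|x_1-x_2\|=L_1\|x_1-x_2\|$, using the $\ell$-Lipschitz continuity of $\nabla f$ and the Lipschitz bound on $y^*$ just established.

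For the $\ell$-weak convexity I would avoid Danskin entirely: since $\nabla f$ is $\ell$-Lipschitz on $\RR^{n+n'}$, for each fixed $y$ the map $x\mapsto f(x,y)$ has $\ell$-Lipschitz gradient and is therefore $\ell$-weakly convex, i.e.\ $x\mapsto f(x,y)+\tfrac{\ell}{2}\|x\|^2$ is convex. Consequently $\Phi(x)+\tfrac{\ell}{2}\|x\|^2=\max_y\big(f(x,y)+\tfrac{\ell}{2}\|x\|^2\big)$ is a pointwise supremum of convex functions of $x$, hence convex, which is precisely $\ell$-weak convexity of $\Phi$.

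I expect the only genuinely delicate point to be the justification that Danskin's theorem applies, i.e.\ that $\Phi$ is differentiable with the stated gradient formula; this rests on uniqueness of $y^*(x)$ from strong concavity plus a standard envelope argument, and may be quoted as a known fact. Everything else — the Lipschitz estimate on $y^*$ and the two final inequalities — is routine, and the weak-convexity half needs no differentiability at all.
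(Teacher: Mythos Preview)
Your proposal is correct and follows essentially the same approach as the paper. The weak-convexity argument is identical to the paper's (pointwise supremum of convex functions), and for the $L_1$-smoothness the paper simply cites \cite[Lemma~23]{lin2020near}, whose content is precisely the Danskin-plus-Lipschitz-of-$y^*$ argument you spell out.
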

\begin{proof} First, the $L_1$-smoothness of $\Phi$ follows directly from \cite[Lemma 23]{lin2020near}. Second, the $\ell$-smoothness of $f$ indicates the convexity of $f(\cdot,y)+\frac{\ell}{2}\|\cdot\|^2$  for any $y$. As the maximum of a family of convex functions, $\Phi(x)+\frac{\ell}{2}\|x\|^2$ is convex.  \end{proof}
Finally, we make the following assumption on the second-order smoothness of $\Phi$.

\begin{assumption} \label{assu:minimax3} The function $\Phi(x)$ is twice continuously differentiable and the Hessian matrix $\nabla^2 \Phi(x)$ is $L_2$-Lipschitz continuous in $\mathbb{R}^{n}.$
\end{assumption}
This assumption is very natural. Suppose $\nabla^2f$ is $\ell_2$-Lipschitz continuous, then \cite[Proposition 1]{chen2022cubic} provides a pessimistic bound that $L_2\leq (1+\kappa_y)^3\ell_2$. However, $L_2$ can be much smaller in many situations. For example, we even have $L_2=\ell_2$ in the hard instance for lower bound construction in Section \ref{lower}. As $\ell_2$ does not directly affect the algorithm design and analysis, we would make Assumption \ref{assu:minimax3} directly on $L_2$ instead of $\ell_2$. Nevertheless, one may still use the upper estimate $(1+\kappa_y)^3\ell_2$ in the algorithm if a reasonable estimate of $L_2$ is not available.

\subsection{The IAPUN algorithm}
\label{section:algo}
To motivate the algorithm, we first briefly discuss the insights behind the optimal method without exploiting  second-order smoothness \cite{lin2020near,zhang2021complexity}, which is based on the proximal point algorithm (PPA): \vspace{-0.09cm}
\begin{equation}
    \label{eqn:ppa} x_{k+1}=\mathop{\text{argmin}}_x \,\, \Phi(x)+\gamma\|x-x_k\|^2.\vspace{-0.13cm}
\end{equation}
It is well-known that PPA takes $O(\gamma\epsilon^{-2})$ iterations to find an $\epsilon$-stationary point, while the $\ell$-weak convexity of $\Phi(x)$ allows us to take an aggressive proximity parameter $\gamma=\ell$ while maintaining the solvability of subproblems. This is a clear contrast to gradient descent (GD) that needs $O(\kappa_y\ell\epsilon^{-2})$ iterations to converge, as GD requires a stepsize $\eta\leq 1/L_1$ for $L_1$-smooth functions. With $\gamma=\ell$ and  $\hat{f}_k(x,y) \!:=\! f(x,y)+\ell\|x-x_k\|^2$, the proximal point subproblem \eqref{eqn:ppa} is equivalent to a minimax subproblem\vspace{-0.09cm} 
\begin{equation}
\label{eqn:ppa-minimax}
x_{k+1}=\mathop{\text{argmin}}_x \max_y \hat{f}_k(x,y),\vspace{-0.13cm}
\end{equation} 
whose objective function is $3\ell$-smooth, $\ell$-strongly convex, and $\mu$-strongly concave. Exchanging the min and max of \eqref{eqn:ppa-minimax} leads to maximizing $\Psi_k(y):=\min_x\hat{f}_k(x,y)$. As the condition number of $\hat{f}(\cdot,y)$ is $O(1)$ in $x$,  we know $\Psi_k(y)$ is still $O(\ell)$-smooth and $\mu$-strongly concave. Note that $\nabla\Psi_k(y)$ can be evaluated by solving $\min_x \hat{f}_k(x,y)$ to sufficient accuracy and applying Danskin's theorem. If both $\max_y\Psi_k(y)$ and $\min_{x} \hat f_k(x,y)$ are solved by Nesterov's accelerated gradient method \cite{nesterov2018lectures}, it will take $\Tilde{O}\big(\sqrt{\kappa_y}\ln\frac{1}{\epsilon}\big)$ gradient evaluations to inexactly compute each PPA step, leading to a first-oracle complexity of $\Tilde{O}\big(\sqrt{\kappa_y}\ell\epsilon^{-2}\big)$ for NC-SC problem. 

Next, to further exploit the second-order smoothness to accelerate the algorithm, we propose to adopt a proximal point variant of the CUPG technique \cite{carmon2017convex}. We start by presuming $\Phi$ to be convex and then apply the accelerated proximal point algorithm to a sequence of carefully crafted subproblems with $\gamma\ll\ell$. If $\Phi$ behaves convex along the iterations, then the fast convergence of convex optimization is achieved. If $\Phi$ behaves nonconvex at some iteration, then a certificate of negative curvature will be returned and a sufficiently large descent can be achieved. 
Below, we formally describe the IAPUN algorithm for solving $\min_x \Phi(x)$ and the required assumptions. 
\begin{assumption}\label{assu:Phi}
Function $\Phi(\cdot)$ is lower bounded, $\ell$-weakly convex, $L_1$-smooth, and it has globally $L_2$-Lipschitz continuous Hessian matrix $\nabla^2 \Phi(\cdot)$. 
\end{assumption}
\noindent If $\Phi(\cdot)$ originates from the NC-SC minimax problem \eqref{prob:minimax}, then Assumption \ref{assu:Phi} is implied by Assumptions \ref{assu:minimax1}, \ref{assu:minimax2} and \ref{assu:minimax3}. In this case,  $\ell$ is often much smaller than $L_1$. Note that in many settings the approximate evaluations of $\Phi(x)$ and $\nabla \Phi(x)$ are efficiently accessible while the exact values cannot be obtained. For generality, we make the following assumption on the inexact estimation of $\Phi$ and $\nabla\Phi$.  
\begin{assumption}\label{error}
For $\forall \Delta_y,\delta_y\!>\!0$ and $\forall x$, we can obtain approximate function and gradient estimators $\phi_x\!$ and $g_x$ s.t. $\left|\phi_{x}-\Phi(x)\right|\le \delta_y$ and $\left\|g_x-\nabla\Phi(x)\right\|\le \Delta_y.$
\end{assumption}

We describe the IAPUN method as Algorithm \ref{alg:main}. Similar to \cite{lin2020near,zhang2021complexity}, at each outer iteration $p_{k-1}$, we attempt to solve the proximal point subproblem (Line 5 - Line 16): \vspace{-0.4cm}
\begin{equation}
\label{prob:PPA-IAPUN}
\min_x\,\,\,\hat{\Phi}_k(x):=\Phi(x)+\alpha \|x-p_{k-1}\|^2,\vspace{-0.13cm}
\end{equation} 
where $\alpha\ll\ell$. Despite the nonconvexity of $\hat{\Phi}_k(x)$, IAPUN executes the inexact accelerated proximal point algorithm (APPA) to minimize it (Line 8 - Line 9). If $\hat{\Phi}_k(x)$ behaves convex along the iterations, then the progress checking (Line 10) in the $\texttt{Certify}$ function (Algorithm \ref{alg:Certify}) will observe a fast convergence of convex optimization and we successfully perform an inexact PPA iteration \eqref{prob:PPA-IAPUN} with sufficient descent (Line 12 with Flag=5), otherwise IAPUN either observes a sufficient descent (Line 12 with Flag = 3 \& Line 15) or can perform a negative curvature descent without computing Hessian (Line 16) by the $\texttt{Exploit-Ncvx}$ function (Algorithm \ref{alg:Exploit-Ncvx}). With function and gradient estimators satisfying Assumption \ref{error}, we adopt the notations\vspace{-0.05cm}   
$$\hat{\phi}_{k(x)}:=\phi_{x} + \alpha\|x-p_{k-1}\|^2\qquad\mbox{and}\qquad\hat{g}_{k(x)}:=g_x + 2\alpha(x-p_{k-1})\vspace{-0.05cm}$$ 
in Algorithm \ref{alg:Exploit-Ncvx} and Algorithm \ref{alg:Certify} for the ease of presentation.

\begin{algorithm2e}
\caption{IAPUN} 
\label{alg:main}
\textbf{Input:} Initial point $p_0$, strong concavity modulus $\mu>0$, Lipschitz constants $\ell,L_2>0$, proximal coefficients $\alpha>0,\gamma>\ell$,  and tolerances $\delta_x,\delta_y,\epsilon>0$ \\
Set $\eta=\frac{\alpha}{L_2}$, $\kappa_x \!=\! \frac{\gamma}{\alpha}$, $\omega \!=\! \frac{2\sqrt{\kappa_x}-1}{2\sqrt{\kappa_x}+1}$, $\kappa_y=\frac{\ell}{\mu},d = \frac{\alpha}{L_2}$, and 
$~\qquad\qquad\qquad~$$\chi \!=\! 6\sqrt{\kappa_x}\cdot\left(11\kappa_x\delta_x+(2\ell+L_1+\alpha)\sqrt{\frac{2\delta_x}{\gamma+2\alpha}}\cdot d\right)$.\vspace{0.1cm}\\
\For{$k=1,2,3,\cdots$}{
    Define $\hat{\Phi}_k(x)=\Phi(x)+\alpha \|x-p_{k-1}\|^2$ and set $\tilde{x}_{k,0}=x_{k,0}=p_{k-1}$.\\
    \For{$t = 1,2,3,\cdots$}{/*** Line 8-9 is the inexact proximal point method. ***/\\ 
    Update the epoch length as $T_k = t$.\\
    Find $x_{k,t}\approx\mathop{\mathrm{argmin}}_{x} \hat{\Phi}_k(x)+\gamma \|x-\tilde{x}_{k,t-1}\|^2$, such that $\hat{\Phi}_k(x_{k,t})+\gamma \|x_{k,t}-\tilde{x}_{k,t-1}\|^2\le\min_x \hat{\Phi}_k(x)+\gamma \|x-\tilde{x}_{k,t-1}\|^2+\delta_x$.\\
    Set $\txk = x_{k,t} + \omega(x_{k,t} - x_{k,t-1})$. \\
    Compute $(\text{Flag},w^x_k) = \texttt{Certify}(\hpk,t,x_{k,t},x_{k,0}).$\\
    \textbf{if} \,$\text{Flag}\neq\textbf{null}\,$ \textbf{then}\,  \textbf{break} the forloop. 
}
\textbf{if} $\text{Flag} == 3 \text{ or } 5$, \,\,\textbf{then}\,\,  Set $p_k = w^x_k$. \\
\If{$\emph{Flag}==1, 2 \emph{ or } 4$,}{
    Evaluate $\phi_{z^x}\!:\! z^x\!\in\!\{w^x_k\}\!\cup\!\{x_{k,t}\!\}_{t=0}^{T_k}$ satisfying Assumption \ref{error}. Set \vspace{-0.15cm}
    $$z^x_k = \mathop{\mathrm{argmin}}_{z^x} \big\{\!\phi_{z^x}: z^x\!\in\!\{w^x_k\}\!\cup\!\{x_{k,t}\!\}_{t=0}^{T_k}\big\}.\vspace{-0.25cm}$$\\
    \textbf{if} $\phi_{z^x_k}<\phi_{x_{k,0}}-\frac{\alpha^3}{32L_2^2}\,$ \textbf{then} Set $p_k = z^x_k$.\\
    \textbf{else} $p_k = \texttt{Exploit-Ncvx}\big(\Phi,\{w^x_k\}\cup\{x_{k,t}\}_{t=0}^{T_k}\big).$
} 
Approximately evaluate $g_{p_k}$  satisfying Assumption \ref{error}.\\
\textbf{if}\,\,$\|g_{p_k}\|\leq \frac{3\epsilon}{4}$ \,\,\textbf{then} \,\,Return($p_k$).
} \vspace{-0.1cm}
\end{algorithm2e} \vspace{-0.1cm}
Algorithm \ref{alg:Certify} can return many flag values to represent different scenarios that can happen during the algorithm. In detail, the first three cases happen when we found the approximate majorized function value increases a lot. $\text{Flag}=1$ means a sufficient descent is obtained. $\text{Flag}=2$ shows the iterate is close to the starting point while the function increases. In this case, an NC pair exists and it serves as a certificate of detecting negative curvature of $\Phi$. If $\text{Flag}=3$, a descent similar to the case $\text{Flag}=1$ can be proved. If the approximate majorized function value does not increase a lot, we check the iterate by inexactly solving $\min_{x} \hat{\Phi}_k(x)+\gamma \|x-x_{k,t}\|^2$. If $\text{Flag}=4$, the approximate solution $w_k^x$ is far from $x_{k,t}$. However, if $\hat{\Phi}$ behaves convex along the trajectory, then $\gamma\|w_k^x-x_{k,t}\|^2$ should be well controlled by the compared value. Thus $\hat{\Phi}$ must behave nonconvex along the trajectory and we can detect an NC-pair. For $\text{Flag}=5$, the sufficient descent analysis is similar to the case $\text{Flag}=3$. 

\begin{algorithm2e}
\caption{$p_k = \texttt{Exploit-Ncvx}(\hpk,\Phi,\{w^x_k\}\cup\{x_{k,t}\}_{t=0}^{T_k})$}
\label{alg:Exploit-Ncvx} 
\textbf{Additional input:} Parameters $\delta_y,\kappa_y,\ell,\alpha,\eta$ are inherited from Algorithm \ref{alg:main}.\\
Denote $\zeta(x,x' ):= \hat{\phi}_{k(x)}-\hat{\phi}_{k(x')}-\hat{g}_{k(x')}^{\top}(x-x')-\frac{\alpha}{2}\|x-x'\|^2$.\\
\For{$t=1,2,\cdots T_k$}{
    Compute $\zeta(x_{k,t-1},x_{k,t})$ after approximately evaluating $\phi_{x_{k,t-1}}$, $\phi_{x_{k,t}}$, and $g_{x_{k,t}}$ satisfying Assumption \ref{error}.\\
    \If{$\zeta(x_{k,t-1},x_{k,t})<-2\delta_y - \Delta_y\|x_{k,t-1}-x_{k,t}\|$}{Set $(u,v) = (x_{k,t-1},x_{k,t})$ and \textbf{break} the forloop.}
    Compute $\zeta(w_k^x,x_{k,t})$ after approximately evaluating $\phi_{w_k^x}$, $\phi_{x_{k,t}}$, and $g_{x_{k,t}}$ satisfying Assumption \ref{error}.\\
    \If{$\zeta(w_k^x,x_{k,t})<-2\delta_y - \Delta_y\|w_k^x-x_{k,t}\|$}{Set $(u,v) = (w_k^x,x_{k,t})$ and \textbf{break} the forloop. }
    \textcolor{gray}{\,\,\,/*** We will theoretically prove that a pair $(u,v)$ can be found. ***/}
}
Set $x^1 = u + \eta\cdot\frac{(u-v)}{\|u-v\|}$ and $x^2 = u - \eta\cdot\frac{(u-v)}{\|u-v\|}$.\\
Return $p_k = \mathop{\mathrm{argmin}}_x\left\{\phi_{x}:x\in\{x^1,x^2\}\right\}$. 
\end{algorithm2e} 

\subsection{Complexity of IAPUN}
In this section, we present the complexity analysis of IAPUN. Starting by assuming $\hat{\Phi}_k(x)$ to be $\alpha$-strongly convex, a linear convergence is expected. If it turns out not to be the case,  we can then find a negative curvature, and thus apply Algorithm \ref{alg:Exploit-Ncvx} to decrease the objective value. As a result, we can guarantee sufficient improvement in each epoch. The lemma below guarantees the inner loop of Algorithm \ref{alg:main} (Line 4 - Line 18) to stop in $\tilde{O}\big(1+\sqrt{\frac{\gamma}{\alpha}}\big)$ steps.

\begin{lemma}
\label{lemma:epln} Consider the $k$-th epoch $\{x_{k,t}\}_{t=0}^{T_k}$ and the point $w^x_k$ generated by Algorithm \ref{alg:main}. Suppose the parameters are chosen so that $\chi+\delta_x+\delta_y\leq\frac{\epsilon^2}{3200\gamma}$, then 
\begin{equation}
    \label{prop:epln-01}
    T_k\leq 1 + 6\sqrt{\frac{\gamma}{\alpha}}\cdot\max\left\{0,\log\left(\frac{3200\gamma(\Phi(x_{k,0})-\Phi^*+2\delta_y)}{\epsilon^2}\right)\right\}.
\end{equation} 
\end{lemma}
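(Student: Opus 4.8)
The plan is to analyze the only event that keeps the inner \texttt{for} loop of Algorithm~\ref{alg:main} alive, namely that \texttt{Certify} returns $\text{Flag}=\textbf{null}$. Inspecting Algorithm~\ref{alg:Certify}, reaching its final line forces, at the current index $t$: (i) the first \texttt{if} block was skipped, so we are genuinely on the null-path and the $w^x_k,x_{k,t}$ at hand are the original (unclipped) ones; (ii) the $\text{Flag}=4$ test failed, i.e. $\gamma\|w^x_k-x_{k,t}\|^2\le\big(1-\tfrac{1}{6\sqrt{\kappa_x}}\big)^tE_k+\chi+\delta_x+2\delta_y$, where $E_k=\hat{\phi}_{k(x_{k,0})}-\hat{\phi}_{k(w^x_k)}+\tfrac{\alpha}{4}\|w^x_k-x_{k,0}\|^2$; and (iii) the $\text{Flag}=5$ test failed, i.e. $\gamma\|w^x_k-x_{k,t}\|>\tfrac{\epsilon}{40}$. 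Since $T_k$ is, by construction, the first $t$ at which \texttt{Certify} does \emph{not} return null, it suffices to show that (ii)--(iii) can persist only for $t$ below the claimed threshold.

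From (iii), $\gamma\|w^x_k-x_{k,t}\|^2=\tfrac{1}{\gamma}\big(\gamma\|w^x_k-x_{k,t}\|\big)^2>\tfrac{\epsilon^2}{1600\gamma}$, which combined with (ii) and the parameter choice (under which the $\chi+\delta_x+2\delta_y$ slack is absorbed thanks to $\chi+\delta_x+\delta_y\le\tfrac{\epsilon^2}{3200\gamma}$) gives $\big(1-\tfrac{1}{6\sqrt{\kappa_x}}\big)^tE_k>\tfrac{\epsilon^2}{3200\gamma}$; in particular $E_k>0$. I then bound $E_k$ from above using only Assumption~\ref{error}, the identity $x_{k,0}=p_{k-1}$, and $\hat{\Phi}_k(w^x_k)=\Phi(w^x_k)+\alpha\|w^x_k-p_{k-1}\|^2\ge\Phi^*$: the two $\delta_y$ slacks from $|\phi_z-\Phi(z)|\le\delta_y$ together with the fact that the quadratic terms contribute $-\alpha\|w^x_k-x_{k,0}\|^2+\tfrac{\alpha}{4}\|w^x_k-x_{k,0}\|^2\le0$ yield $E_k\le\Phi(x_{k,0})-\Phi^*+2\delta_y=:D_k$, a strictly positive quantity.

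Substituting $E_k\le D_k$ gives $\big(1-\tfrac{1}{6\sqrt{\kappa_x}}\big)^t>\tfrac{\epsilon^2}{3200\gamma D_k}$. Taking natural logarithms and applying $\ln(1-u)\le-u$ with $u=\tfrac{1}{6\sqrt{\kappa_x}}\in(0,1)$ turns this into $-\tfrac{t}{6\sqrt{\kappa_x}}>-\log\!\big(\tfrac{3200\gamma D_k}{\epsilon^2}\big)$, i.e. $t<6\sqrt{\kappa_x}\,\log\!\big(\tfrac{3200\gamma D_k}{\epsilon^2}\big)=6\sqrt{\gamma/\alpha}\,\log\!\big(\tfrac{3200\gamma D_k}{\epsilon^2}\big)$. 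This bound holds for every $t\in\{1,\dots,T_k-1\}$ (each of which produced $\text{Flag}=\textbf{null}$); taking $t=T_k-1$ when $T_k\ge2$ gives $T_k\le1+6\sqrt{\gamma/\alpha}\,\log\!\big(\tfrac{3200\gamma D_k}{\epsilon^2}\big)$ and forces the logarithm to be positive, while $T_k=1$ satisfies \eqref{prop:epln-01} trivially since the second term there is nonnegative; wrapping the logarithm in $\max\{0,\cdot\}$ covers the remaining regime $\tfrac{3200\gamma D_k}{\epsilon^2}\le1$, in which one checks directly from (ii)--(iii) that the $\text{Flag}=5$ test must already fire at $t=1$. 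Recalling $D_k=\Phi(x_{k,0})-\Phi^*+2\delta_y$ gives exactly \eqref{prop:epln-01}.

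The genuine difficulty is bookkeeping rather than any isolated inequality: one must verify that in the null-branch of \texttt{Certify} the points $w^x_k$ and $x_{k,t}$ are the originals (not the copies recomputed and clipped to $\mathcal{X}_k$ inside the $\text{Flag}=2$ branch), and one must propagate the $\delta_x$- and $\delta_y$-level errors consistently through the definition of $E_k$ and through the $\text{Flag}=4$/$\text{Flag}=5$ thresholds so that the absolute constants ($40$, $3200$, $6$) in \eqref{prop:epln-01} actually close. Beyond that, the argument is the familiar one that a contraction by a factor $1-\Theta(1/\sqrt{\kappa_x})$ cannot stay above a target of size $\Theta(\epsilon^2/\gamma)$ for more than $\tilde{O}(\sqrt{\kappa_x})$ iterations.
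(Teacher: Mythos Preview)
Your proposal is correct and follows essentially the same route as the paper: identify that $\text{Flag}=\textbf{null}$ at $t\le T_k-1$ forces both the failed Flag=4 inequality and the failed Flag=5 inequality, combine them to get $(1-\tfrac{1}{6\sqrt{\kappa_x}})^t E_k \gtrsim \epsilon^2/\gamma$, bound $E_k\le \Phi(x_{k,0})-\Phi^*+2\delta_y$ via Assumption~\ref{error} and the nonpositivity of $-\tfrac{3\alpha}{4}\|w^x_k-x_{k,0}\|^2$, and take logs. Your handling of the $T_k=1$ and $\log\le 0$ edge cases is in fact slightly more explicit than the paper's; the only cosmetic wrinkle is that the Flag=4 threshold in Algorithm~\ref{alg:Certify} carries a $2\delta_y$ rather than the $\delta_y$ appearing in the lemma's hypothesis, so ``absorbed thanks to $\chi+\delta_x+\delta_y\le\epsilon^2/(3200\gamma)$'' is off by one $\delta_y$---but this same discrepancy is present in the paper's own proof and is immaterial to the argument.
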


\begin{proof}
Suppose the epoch $k$ terminates at the $T_k$-th iteration. Then according to Line 11 of Algorithm \ref{alg:main}, Algorithm \ref{alg:Certify} must return $\text{Flag}=\textbf{null}$ for the first $T_k-1$ iterations. According to Line 15, Line 17, and Line 19 in Algorithm \ref{alg:Certify}, for  $1\leq t\leq T_k-1$, the following inequalities must hold:
\begin{equation}
    \label{prop:epln-1}
    \gamma\|w^x_k-x_{k,t}\|^2\leq\big(1-\frac{1}{6\sqrt{\kappa_x}}\big)^tE_k + \chi +\delta_x+\delta_y \qquad \mbox{and}\qquad\gamma\|w^x_k-x_{k,t}\|\geq \frac{\epsilon}{40},
\end{equation}  
which further indicates that
\begin{equation} 
    \label{prop:epln-2}
    e^{-\frac{t}{6\sqrt{\kappa_x}}}E_k\geq\big(1-\frac{1}{6\sqrt{\kappa_x}}\big)^tE_k \geq \frac{\epsilon^2}{1600\gamma}-\chi - \delta_x-\delta_y\geq \frac{\epsilon^2}{3200\gamma}.
\end{equation}  
Under Assumption \ref{error}, an upper bound for $E_k$ at any $t\leq T_k-1$ satisfies
\begin{eqnarray}
    \label{prop:epln-3}
    E_k & = & \hat{\phi}_{k(x_{k,0})} - \hat{\phi}_{k(w^x_k)} + \frac{\alpha}{4}\|w^x_k-x_{k,0}\|^2\\
    & \leq & \Phi(x_{k,0})+\delta_y - (\Phi(w^x_k) + \alpha\|w^x_k-x_{k,0}\|^2-\delta_y) + \frac{\alpha}{4}\|w^x_k-x_{k,0}\|^2\nonumber\\
    & \leq & \Phi(x_{k,0}) - \Phi^* + 2\delta_y.\nonumber
\end{eqnarray} 
Combining the inequalities \eqref{prop:epln-2} and \eqref{prop:epln-3} and taking $t=T_k-1$ proves \eqref{prop:epln-01}. 
\end{proof}

\begin{algorithm2e}
    \caption{$(\text{Flag},w^x_k,w^y_k) = \texttt{Certify}(\hpk,t,x_{k,t},x_{k,0})$}
	\label{alg:Certify} 
	\textbf{Additional input:}  $\delta_x,\delta_y,\chi,\alpha,\gamma,\kappa_x,\epsilon$ are inherited from Algorithm \ref{alg:main}.\\
    Approximately evaluate $\phi_{x_{k,t}}$, and $\phi_{x_{k,0}}$ satisfying Assumption \ref{error}.\\
\If{$\hat{\phi}_{k(x_{k,t})}>\hat{\phi}_{k(x_{k,0})}+\chi+2\delta_y$}{
	\If{\,$\min_{1\leq s\leq t-1} \,\phi_{x_{k,s}}\leq \phi_{x_{k,0}}-\frac{\alpha^3}{32L_2^2}$}{Return$(\text{Flag}=1, w^x_k = \textbf{null} )$}
    \uIf{$\|x_{k,0}-x_{k,t}\|\leq \frac{\alpha}{4L_2}$\vspace{0.1cm}}{
    Return$(\text{Flag}=2, w^x_k = x_{k,0})$}
    \Else{Recompute  $x_{k,t}\approx \mathop{\mathrm{argmin}}_{x \in \mathcal{X}_k} \hat{\Phi}_k(x)+\gamma \|x-\tilde{x}_{k,t-1}\|^2$ such that $x_{k,t}\in\mathcal{X}_k:=B(x_{k,0},\frac{\alpha}{4L_2})$, and \vspace{-0.3cm}
    $$\hat{\Phi}_k(x_{k,t})+\gamma \|x_{k,t}-\tilde{x}_{k,t-1}\|^2\le\min_{x \in \mathcal{X}_k} \hat{\Phi}_k(x)+\gamma \|x-\tilde{x}_{k,t-1}\|^2+\delta_x$$\vspace{-0.45cm}}
     Approximately evaluate $\phi_{x_{k,t}}$ satisfying Assumption \ref{error}.\\
     \uIf{$\hat{\phi}_{k(x_{k,t})}>\hat{\phi}_{k(x_{k,0})}+\chi+2\delta_y $\vspace{0.1cm}}{Return$(\text{Flag}=2, w^x_k = x_{k,0})$ }
    \Else{Return$(\text{Flag} = 3$, $w^x_k = x_{k,t})$.\vspace{-0.1cm}}
	}
	Find $w^x_k\approx\mathop{\mathrm{argmin}}_{x} \hat{\Phi}_k(x)+\gamma \|x-x_{k,t}\|^2$, such that\vspace{-0.2cm} $$\hat{\Phi}_k(w_k^x)+\gamma \|w_k^x-x_{k,t}\|^2\le\min_{x} \hat{\Phi}_k(x)+\gamma \|x-x_{k,t}\|^2+\delta_x$$\vspace{-0.45cm} \\
	 Approximately evaluate $\phi_{w^x_k}$ satisfying Assumption \ref{error}. Define $E_k:= \hat{\phi}_{k(x_{k,0})} - \hat{\phi}_{k(w^x_k)} + \frac{\alpha}{4}\|w^x_k-x_{k,0}\|^2$.\\
	\uIf{$\gamma\|w^x_k-x_{k,t}\|^2>\big(1-\frac{1}{6\sqrt{\kappa_x}}\big)^tE_k + \chi +\delta_x +2\delta_y$\vspace{0.1cm}}{Return($\text{Flag} = 4$, $w^x_k$).\vspace{0.1cm}}
	\ElseIf{$\gamma\|w^x_k-x_{k,t}\|\leq\frac{\epsilon}{40}$\vspace{0.1cm}}{Return($\text{Flag} = 5$, $w^x_k$).}  
	Return($\text{Flag} = \textbf{null}$, $w^x_k=\textbf{null}$). 
\end{algorithm2e}

The following two lemmas give the descent guarantee when $\text{Flag}=3 \text{ or } 5$, i.e., the descent when the algorithm does not exploit the negative curvature. The proof of which can be found in Appendices \ref{proof:lemma:Flag-5} and \ref{proof:lemma:Flag-3}.

\begin{lemma}\label{lemma:Flag-5}
Suppose Algorithm \ref{alg:main} does not terminate in the $k$-th epoch. Namely, we have $\|g_{p_k}\|>\frac{3\epsilon}{4}$. Suppose the algorithmic parameters are chosen so that $\Delta_y<\frac{\epsilon}{4}$, $\sqrt{\frac{2 \delta_x}{\gamma+2 \alpha}} \leq \frac{\epsilon}{20\left(L_1+2 \gamma\right)}$ and $\chi+\delta_x+4 \delta_y \leq \frac{\epsilon^2}{50 \alpha}$. If $p_k$ is given by Line 12 with $\text{Flag} = 5$ and $p_k=w^x_k$, then
\(\Phi\left(p_k\right)-\Phi\left(p_{k-1}\right) \leq-\frac{\epsilon^2}{50 \alpha}.\)
\end{lemma}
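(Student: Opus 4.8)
The plan is to exploit the inexact proximal-point structure behind $\text{Flag}=5$, for which $p_k=w^x_k$ is a $\delta_x$-approximate minimizer of $h(x):=\hat{\Phi}_k(x)+\gamma\|x-x_{k,T_k}\|^2$. First I would read off the branch condition: reaching the final block of \texttt{Certify} forces its test $\hat{\phi}_{k(x_{k,T_k})}>\hat{\phi}_{k(x_{k,0})}+\chi+2\delta_y$ to be false, i.e.\ $\hat{\phi}_{k(x_{k,T_k})}\le\hat{\phi}_{k(x_{k,0})}+\chi+2\delta_y$; applying Assumption~\ref{error} on both sides together with $x_{k,0}=p_{k-1}$ (so $\hat{\Phi}_k(x_{k,0})=\Phi(p_{k-1})$) turns this into $\hat{\Phi}_k(x_{k,T_k})\le\Phi(p_{k-1})+\chi+4\delta_y$. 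Testing the defining inequality of the $\delta_x$-accurate solution $w^x_k$ at the point $x=x_{k,T_k}$ and dropping a nonnegative term gives $\hat{\Phi}_k(w^x_k)\le\hat{\Phi}_k(x_{k,T_k})+\delta_x$, so, since $p_k=w^x_k$ and $\hat{\Phi}_k(w^x_k)=\Phi(p_k)+\alpha\|p_k-p_{k-1}\|^2$,
\[
\Phi(p_k)-\Phi(p_{k-1})\le\chi+\delta_x+4\delta_y-\alpha\|p_k-p_{k-1}\|^2\le\frac{\epsilon^2}{50\alpha}-\alpha\|p_k-p_{k-1}\|^2,
\]
the last step using $\chi+\delta_x+4\delta_y\le\epsilon^2/(50\alpha)$. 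It then suffices to show $\alpha\|p_k-p_{k-1}\|^2>2\epsilon^2/(50\alpha)$, equivalently $\|p_k-p_{k-1}\|>\epsilon/(5\alpha)$.

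For that lower bound I would argue that $p_k$ has a large true gradient but a nearly vanishing regularized gradient. Non-termination in epoch $k$ means $\|g_{p_k}\|>3\epsilon/4$, so Assumption~\ref{error} with $\Delta_y<\epsilon/4$ yields $\|\nabla\Phi(p_k)\|\ge\|g_{p_k}\|-\Delta_y>\epsilon/2$. Since $\nabla\hat{\Phi}_k(p_k)=\nabla\Phi(p_k)+2\alpha(p_k-p_{k-1})$, we get $2\alpha\|p_k-p_{k-1}\|\ge\|\nabla\Phi(p_k)\|-\|\nabla\hat{\Phi}_k(p_k)\|$, so it is enough to prove $\|\nabla\hat{\Phi}_k(p_k)\|\le\epsilon/10$. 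Here I use that $h$ is $(L_1+2\alpha+2\gamma)$-smooth and (by $\ell$-weak convexity of $\Phi$ and $\gamma>\ell$) strongly convex, hence has a minimizer; the standard smoothness inequality and the $\delta_x$-accuracy of $p_k=w^x_k$ then give $\|\nabla h(p_k)\|^2\le 2(L_1+2\alpha+2\gamma)\bigl(h(p_k)-\min_x h(x)\bigr)\le 2(L_1+2\alpha+2\gamma)\delta_x$. Writing $\nabla\hat{\Phi}_k(p_k)=\nabla h(p_k)-2\gamma(p_k-x_{k,T_k})$ and invoking the $\text{Flag}=5$ exit condition $\gamma\|p_k-x_{k,T_k}\|\le\epsilon/40$ gives $\|\nabla\hat{\Phi}_k(p_k)\|\le\sqrt{2(L_1+2\alpha+2\gamma)\delta_x}+\epsilon/20$, and the hypothesis $\sqrt{2\delta_x/(\gamma+2\alpha)}\le\epsilon/(20(L_1+2\gamma))$ together with $\alpha$ being negligible against $L_1$ and $\gamma$ (so $(L_1+2\alpha+2\gamma)(\gamma+2\alpha)\lesssim(L_1+2\gamma)^2$) bounds the square-root term by $\epsilon/20$. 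Thus $\|\nabla\hat{\Phi}_k(p_k)\|\le\epsilon/10$, whence $2\alpha\|p_k-p_{k-1}\|>\epsilon/2-\epsilon/10=2\epsilon/5$ and $\alpha\|p_k-p_{k-1}\|^2>\epsilon^2/(25\alpha)=2\epsilon^2/(50\alpha)$.

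Plugging this into the first display yields $\Phi(p_k)-\Phi(p_{k-1})\le\epsilon^2/(50\alpha)-2\epsilon^2/(50\alpha)=-\epsilon^2/(50\alpha)$, which is the claim. I expect the main obstacle to be the gradient estimate in the second step: one must convert the function-value accuracy of the inexact subproblem solve into a bound on $\|\nabla\hat{\Phi}_k(p_k)\|$ and then carry three unrelated error sources — the subproblem tolerance $\delta_x$ (entering amplified by the large constant $L_1$), the inexact-oracle errors $\delta_y,\Delta_y$, and the certificate slack $\chi$ — through the argument so that the $\epsilon/2$ margin on $\|\nabla\Phi(p_k)\|$ survives with enough room to absorb the factor-two loss in the first display. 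Keeping all the constants inside the stated parameter restrictions is the delicate part; the rest reduces to bookkeeping with Assumption~\ref{error} and the definitions in Algorithms~\ref{alg:main} and~\ref{alg:Certify}.
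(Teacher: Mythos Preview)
Your argument follows the paper's proof almost exactly: both pieces — the descent bound
\[
\Phi(p_k)-\Phi(p_{k-1})\le\chi+\delta_x+4\delta_y-\alpha\|p_k-p_{k-1}\|^2
\]
obtained from the branch inequality $\hat\phi_{k(x_{k,T_k})}\le\hat\phi_{k(x_{k,0})}+\chi+2\delta_y$ together with the $\delta_x$-accuracy of $w^x_k$, and the distance bound $2\alpha\|p_k-p_{k-1}\|\ge 2\epsilon/5$ coming from $\|\nabla\Phi(p_k)\|>\epsilon/2$ and $\|\nabla\hat\Phi_k(p_k)\|\le\epsilon/10$ — are identical in spirit and order to the paper's.

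The one point of divergence is how you obtain $\|\nabla\hat\Phi_k(p_k)\|\le\epsilon/10$. You invoke the smoothness inequality $\|\nabla h(p_k)\|^2\le 2(L_1+2\alpha+2\gamma)\delta_x$; the paper instead uses the $(\gamma+2\alpha)$-strong convexity of $h$ to get $\|w^x_k-w^{*x}_k\|\le\sqrt{2\delta_x/(\gamma+2\alpha)}$ and then Lipschitz continuity of $\nabla\hat\Phi_k$ together with the KKT condition $\nabla\hat\Phi_k(w^{*x}_k)=-2\gamma(w^{*x}_k-x_{k,t})$, arriving directly at $(L_1+2\gamma)\sqrt{2\delta_x/(\gamma+2\alpha)}\le\epsilon/20$. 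The paper's route matches the stated hypothesis on the nose. Your route instead needs $\sqrt{(L_1+2\alpha+2\gamma)(\gamma+2\alpha)}\le L_1+2\gamma$, which is where the hand-wave ``$\alpha$ negligible against $L_1,\gamma$'' enters; note that this inequality is \emph{not} implied by the lemma's hypotheses alone (e.g.\ it fails for $\kappa_y=1$, $\gamma=\ell$, $\alpha=\ell$), so to make your version rigorous you would have to import the additional relation $\alpha\ll\gamma$ from the eventual parameter choice rather than from the lemma as stated. Switching to the paper's strong-convexity-then-Lipschitz argument removes this loose end with no extra work.
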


\begin{lemma}\label{lemma:Flag-3}
Suppose the algorithmic parameters are chosen to satisfy $\sqrt{\frac{2 \delta_x}{\gamma+2 \alpha}} \leq \frac{\alpha}{24 L_2}$ and $\chi+4 \delta_y \leq \frac{\alpha^3}{72 L_2^2}$. In the $k$-th epoch of Algorithm \ref{alg:main}, if $p_k$ is given by Line 12 with $\text{Flag} = 3$ and $p_k=w^x_k$, then
\(\Phi\left(p_k\right)-\Phi\left(p_{k-1}\right) \leq-\frac{\alpha^3}{72 L_2^2}.\)
\end{lemma}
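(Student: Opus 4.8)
Here is the plan I would follow.

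\medskip

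\noindent\textbf{Setting up.} The plan is to unwind what $\text{Flag}=3$ encodes, convert the test ``the perturbed value did not blow up'' into a descent estimate carrying a quadratic penalty, and then argue that the returned iterate has travelled a fixed distance away from $x_{k,0}$ so that the penalty dominates. First, tracing Algorithm~\ref{alg:Certify}: $\text{Flag}=3$ is produced only after the iterate that Algorithm~\ref{alg:main} feeds into \texttt{Certify} (the output of the inexact proximal solve), call it $\bar{x}$, satisfies $\|\bar{x}-x_{k,0}\|>R:=\frac{\alpha}{4L_2}$ — this is exactly what routes \texttt{Certify} into the \texttt{Else} branch and triggers a recomputation — and then a new iterate, still denoted $x_{k,t}=w^x_k=p_k$, is obtained as a $\delta_x$-accurate minimizer of $q(x):=\hat{\Phi}_k(x)+\gamma\|x-\tilde{x}_{k,t-1}\|^2$ over $\mathcal{X}_k=B(x_{k,0},R)$ and passes $\hat{\phi}_{k(x_{k,t})}\le\hat{\phi}_{k(x_{k,0})}+\chi+2\delta_y$. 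Using Assumption~\ref{error} (so $|\hat{\phi}_{k(\cdot)}-\hat{\Phi}_k(\cdot)|\le\delta_y$), together with $x_{k,0}=p_{k-1}$ (whence $\hat{\Phi}_k(x_{k,0})=\Phi(p_{k-1})$) and $\hat{\Phi}_k(x_{k,t})=\Phi(p_k)+\alpha\|p_k-p_{k-1}\|^2$, that test rearranges to
\[
\Phi(p_k)-\Phi(p_{k-1})\ \le\ \chi+4\delta_y-\alpha\|p_k-p_{k-1}\|^2 .
\]

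\medskip

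\noindent\textbf{The crux: the returned iterate moved far.} I would lower bound $\|p_k-p_{k-1}\|=\|x_{k,t}-x_{k,0}\|$. Since $\Phi$ is $\ell$-weakly convex and $\gamma>\ell$, $q$ is $\sigma$-strongly convex with $\sigma=2\gamma+2\alpha-\ell\ge\gamma+2\alpha$. Let $x^\star$ be the unconstrained minimizer of $q$ and $x^\star_C$ its minimizer over $\mathcal{X}_k$. The quadratic-growth inequality for $\sigma$-strongly convex functions — which holds verbatim relative to the convex set $\mathcal{X}_k$ — bounds the distance of any $\delta_x$-accurate minimizer to the exact one by $\sqrt{2\delta_x/\sigma}$, and the first hypothesis gives $\sqrt{2\delta_x/\sigma}\le\sqrt{2\delta_x/(\gamma+2\alpha)}\le\frac{\alpha}{24L_2}=\frac{R}{6}$. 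Hence $\|\bar{x}-x^\star\|\le\frac{R}{6}$, so $\|x^\star-x_{k,0}\|>R-\frac{R}{6}=\frac{5R}{6}$, and also $\|x_{k,t}-x^\star_C\|\le\frac{R}{6}$. Now I would split cases: if $x^\star\notin\mathcal{X}_k$, then the constrained minimizer of the strongly convex $q$ must lie on $\partial\mathcal{X}_k$, so $\|x^\star_C-x_{k,0}\|=R$; if $x^\star\in\mathcal{X}_k$, then $x^\star_C=x^\star$, so $\|x^\star_C-x_{k,0}\|>\frac{5R}{6}$. In either case $\|x^\star_C-x_{k,0}\|\ge\frac{5R}{6}$, and therefore
\[
\|p_k-p_{k-1}\|\ \ge\ \frac{5R}{6}-\frac{R}{6}\ =\ \frac{2R}{3}\ =\ \frac{\alpha}{6L_2}.
\]

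\medskip

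\noindent\textbf{Conclusion.} Substituting $\|p_k-p_{k-1}\|^2\ge\alpha^2/(36L_2^2)$ into the first display and invoking the second hypothesis $\chi+4\delta_y\le\alpha^3/(72L_2^2)$ closes the argument:
\[
\Phi(p_k)-\Phi(p_{k-1})\ \le\ \frac{\alpha^3}{72L_2^2}-\alpha\cdot\frac{\alpha^2}{36L_2^2}\ =\ -\frac{\alpha^3}{72L_2^2}.
\]

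\medskip

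\noindent\textbf{Main obstacle.} I expect the second step to be the real difficulty: the recomputed iterate is only an \emph{approximate} constrained minimizer, so one cannot simply declare it to sit on $\partial\mathcal{X}_k$; the quantitative bound $\|x_{k,t}-x_{k,0}\|\ge\alpha/(6L_2)$ must instead be extracted from the fact that the \emph{unconstrained} minimizer of $q$ is essentially outside $\mathcal{X}_k$ (inherited from $\bar{x}$), via the case split above and careful tracking of the $R/6$ inexactness margin — and this is precisely where both parameter hypotheses get consumed, one to keep that margin small and the other to absorb $\chi+4\delta_y$. Everything else is routine bookkeeping with Assumption~\ref{error} and strong convexity.
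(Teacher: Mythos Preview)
Your proposal is correct and follows essentially the same route as the paper: both arguments convert the $\text{Flag}=3$ test into $\Phi(p_k)-\Phi(p_{k-1})\le\chi+4\delta_y-\alpha\|p_k-p_{k-1}\|^2$, then use the same case split (unconstrained minimizer of $q$ inside or outside $\mathcal{X}_k$) together with the $\delta_x$-accuracy bound $\sqrt{2\delta_x/(\gamma+2\alpha)}\le R/6$ to conclude $\|p_k-p_{k-1}\|\ge\alpha/(6L_2)$. The only cosmetic difference is that in the ``inside'' case the paper triangles directly through the unconstrained iterate $\bar{x}$ (yielding $R-2\cdot R/6$), whereas you first transfer $\|\bar{x}-x_{k,0}\|>R$ to $\|x^\star-x_{k,0}\|>5R/6$ before comparing to $p_k$; both paths land on the same numerical bound.
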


\noindent Next, we show that the approximate majorized function $\hat{\phi}_{k(\cdot)}$ of the iteration points is upper bounded, which is guaranteed by the certifying process in Algorithm \ref{alg:Certify}.

\begin{lemma}\label{lemma:LeqFx0}
Consider the $k$-th epoch $\{x_{k,t}\}_{t=0}^{T_k}$ and the point $w^x_k$ generated by Algorithm \ref{alg:main}. For any $x\in \{x_{k,t}\}_{t=0}^{T_k-1}\cup \{w_k^x\}$, we have \vspace{-0.1cm}
\begin{equation}
    \label{prop:epln-02}
    \hat{\phi}_{k(x)}\leq \hat{\phi}_{k(x_{k,0})}+\chi+\delta_x +4\delta_y.\vspace{-0.1cm}
\end{equation}
\end{lemma}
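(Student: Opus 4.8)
The statement is a bookkeeping lemma: every iterate $x_{k,t}$ for $t \le T_k - 1$, together with $w^x_k$, satisfies the bound $\hat\phi_{k(x)} \le \hat\phi_{k(x_{k,0})} + \chi + \delta_x + 4\delta_y$ on the \emph{approximate} majorized value. Since the only mechanism that could have revealed a large increase of the majorized value is the test in \texttt{Certify} (the condition $\hat\phi_{k(x_{k,t})} > \hat\phi_{k(x_{k,0})} + \chi + 2\delta_y$ that opens the Flag $=1,2,3$ branch), the natural argument is: \emph{if the bound failed at some iterate, that iterate would have been caught and the epoch would have terminated earlier, contradicting $t \le T_k - 1$.} So I would argue by cases on what kind of point $x$ is.

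First, for $x = x_{k,t}$ with $1 \le t \le T_k - 1$: since the epoch did not terminate at iteration $t$, \texttt{Certify} returned $\text{Flag} = \textbf{null}$, which in particular means the test on Line 3 of Algorithm \ref{alg:Certify} failed, i.e. $\hat\phi_{k(x_{k,t})}^{\text{(eval)}} \le \hat\phi_{k(x_{k,0})}^{\text{(eval)}} + \chi + 2\delta_y$ where the superscript denotes the value computed from the $\delta_y$-accurate function oracle. Converting from the approximate evaluations to the true majorized values costs an additive $\delta_y$ on each side (Assumption \ref{error} applied to $\Phi$, noting the $\alpha\|\cdot - p_{k-1}\|^2$ term is exact), giving $\hat\phi_{k(x_{k,t})} \le \hat\phi_{k(x_{k,0})} + \chi + 4\delta_y$, which is even slightly stronger than claimed. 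The cases $x = x_{k,0}$ is trivial (the bound reads $0 \le \chi + \delta_x + 4\delta_y$, true since all quantities are nonnegative).

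Next, for $x = w^x_k$: this point is produced on the penultimate line of \texttt{Certify} as a $\delta_x$-approximate minimizer of $\hat\Phi_k(\cdot) + \gamma\|\cdot - x_{k,t}\|^2$, so comparing its objective to that of the feasible point $x_{k,t}$ gives $\hat\Phi_k(w^x_k) + \gamma\|w^x_k - x_{k,t}\|^2 \le \hat\Phi_k(x_{k,t}) + \delta_x$, hence $\hat\phi_{k(w^x_k)} = \hat\Phi_k(w^x_k) \le \hat\Phi_k(x_{k,t}) + \delta_x = \hat\phi_{k(x_{k,t})} + \delta_x$ (here I am conflating $\hat\phi$ and $\hat\Phi$ up to the $\delta_y$ evaluation error, which I would track carefully). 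Chaining this with the bound on $\hat\phi_{k(x_{k,t})}$ from the previous paragraph — but now I must be careful: $w^x_k$ is only reached when \texttt{Certify} is in the branch past Line 3, i.e. when $x_{k,t}$ \emph{did} pass the non-increase test, so the bound $\hat\phi_{k(x_{k,t})} \le \hat\phi_{k(x_{k,0})} + \chi + 2\delta_y$ (plus evaluation error) is available regardless of whether $t = T_k$ or $t < T_k$. Adding the $\delta_x$ from the approximate minimization and the evaluation slack yields exactly $\hat\phi_{k(w^x_k)} \le \hat\phi_{k(x_{k,0})} + \chi + \delta_x + 4\delta_y$.

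\textbf{Main obstacle.} The only real subtlety is careful accounting of the three distinct sources of slack — the $\delta_y$ error in each function evaluation (and there are evaluations of $\phi_{x_{k,t}}$, $\phi_{x_{k,0}}$, and $\phi_{w^x_k}$ that enter at different places), the $\delta_x$ error in the approximate subproblem solves, and the design constant $\chi$ — so that the final additive constant is exactly $\chi + \delta_x + 4\delta_y$ and not something larger. One must also handle the re-computation branch of \texttt{Certify} (the $\mathcal X_k$-restricted resolve when $\|x_{k,0} - x_{k,t}\| > \frac{\alpha}{4L_2}$): in that branch $x_{k,t}$ is replaced, and one checks that after the resolve the Line-15-style test is re-run, so the replaced $x_{k,t}$ either fails the non-increase test (and we are in the already-handled regime) or triggers $\text{Flag} = 2$ and the epoch ends at $t = T_k$, i.e. $x_{k,t}$ is not among $\{x_{k,s}\}_{s=0}^{T_k-1}$ and need not be bounded. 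So the case analysis must be indexed by the \texttt{Certify} return value, and in each terminal-flag case one verifies the point in question is either excluded from the set in the lemma statement or already covered by the non-increase test. None of this is deep; it is just disciplined case-tracking.
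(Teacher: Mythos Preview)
Your approach is essentially the same as the paper's: argue case-by-case that (i) each $x_{k,t}$ with $t\le T_k-1$ passed the Line~3 test in \texttt{Certify} (since Flag was \textbf{null}), yielding the bound directly, and (ii) for $w^x_k$, split on the Flag value --- Flag $=1$ gives $w^x_k=\textbf{null}$, Flag $=2$ gives $w^x_k=x_{k,0}$, Flag $=3$ gives $w^x_k=x_{k,t}$ with the non-increase test satisfied, and Flag $=4,5$ chain the $\delta_x$-approximate minimization (Line~15) with the already-passed Line~3 test to pick up the $\delta_x$ term. One small note: since $\hat\phi_{k(x)}$ is \emph{defined} as $\phi_x+\alpha\|x-p_{k-1}\|^2$ using the algorithm's own approximate evaluation, the Line~3 inequality is \emph{already} a statement about $\hat\phi_{k(\cdot)}$ with no conversion needed --- your ``(eval)'' superscript and the extra $2\delta_y$ you add are unnecessary (the paper simply reads off $\hat\phi_{k(x_{k,t})}\le\hat\phi_{k(x_{k,0})}+\chi+2\delta_y$), though the stated bound $\chi+\delta_x+4\delta_y$ absorbs this either way.
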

We should emphasize that $x_{k,T_k}$ is not included in \eqref{prop:epln-02}.

\begin{proof}
First of all, let us notice that $\hpk(x_{k,0}) = \Phi(x_{k,0}) + \alpha\|x_{k,0}-x_{k,0}\|^2 = \Phi(x_{k,0})$. To show \eqref{prop:epln-02}, we should notice that when $x\in \{x_{k,t}\}_{t=0}^{T_k-1}$, \eqref{prop:epln-02} is directly guaranteed. This is because epoch $k$ does not terminate at $t\leq T_k-1$, meaning that Algorithm \ref{alg:Certify} returns Flag$=$\textbf{null}, and the condition in Line 3 of Algorithm \ref{alg:Certify} is not satisfied. When $x=w^x_k$, there are several situations. In case $\text{Flag}=1$ and $w^x_k= \textbf{null}$, thus \eqref{prop:epln-02} is not required for $w^x_k$ at all. In case $\text{Flag}=2$ and $w^x_k = x_{k,0}$, thus \eqref{prop:epln-02} holds trivially. In case $\text{Flag}=3$ and $w^x_k = x_{k,t}$, \eqref{prop:epln-02} also holds trivially by the condition for returning the flag value. In case $\text{Flag}=4$ or $5$, we know $\hat{\phi}_{k(x_{k,t})}\leq \hat{\phi}_{k(x_{k,0})} + \chi + 2\delta_y$, combined with Line 15 of Algorithm \ref{alg:Certify} and Line 8 of Algorithm \ref{alg:main}, we have \vspace{-0.1cm}
\begin{eqnarray*}
    \hat{\phi}_{k(w^x_k)} &\leq& \hat{\phi}_{k(w^x_k)} + \gamma\|w^x_k-x_{k,t}\|^2+\delta_y\leq \hpk(x_{k,t})+\delta_x+\delta_y\\
    &\leq&\hat{\phi}_{k(x_{k,t})} + \delta_x + 2\delta_y\leq \hat{\phi}_{k(x_{k,0})} + \chi  + \delta_x+ 4\delta_y,\vspace{-0.1cm}
\end{eqnarray*}
which completes the proof.  
\end{proof}

To exploit the negative curvature, the iteration points must stay in some $\Theta(\alpha/L_2)$-radius ball. We prove this property by the following Lemma.

\begin{lemma}\label{lemma:distance}
In Algorithm \ref{alg:main}, suppose the algorithmic parameters are chosen so that $\chi + \delta_x + 4\delta_y\leq\frac{\alpha^3}{32L_2^2}$. Then in any epoch $k$, if Line 16 is executed, then we must have $\|x-x_{k,0}\|\leq\frac{\alpha}{4L_2}$ for any $x\in\left\{x_{k,s}\right\}_{s=0}^{T_k}\!\cup\!\left\{w^x_k\right\}.$ Consequently, \vspace{-0.1cm} 
\begin{equation*}
    \max\left\{\|x_{k,s-1}-x_{k,s}\|,\|x_{k,s}-w^x_{k}\|\right\}\leq \frac{\alpha}{2L_2}, \qquad\mbox{for}\qquad 1\leq s \leq T_k.\vspace{-0.1cm}
\end{equation*}
\end{lemma}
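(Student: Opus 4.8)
The plan is to derive the bound $\|x - x_{k,0}\| \le \frac{\alpha}{4L_2}$ by combining the upper bound on the approximate majorized function $\hat\phi_{k(x)}$ from Lemma \ref{lemma:LeqFx0} with the $\alpha$-strong convexity of the proximal term $\alpha\|\cdot - p_{k-1}\|^2 = \alpha\|\cdot - x_{k,0}\|^2$ inside $\hat\Phi_k$. Since Line 16 is executed only when Flag $=1$, $2$, or $4$ and a negative curvature step is eventually taken, I would first argue that throughout such an epoch the \texttt{Certify} routine never hit the branch in Line 3 of Algorithm \ref{alg:Certify} with $\|x_{k,0}-x_{k,t}\| > \frac{\alpha}{4L_2}$ without a compensating re-solve — more precisely, whenever that branch is entered, $x_{k,t}$ is re-projected into $\mathcal{X}_k = B(x_{k,0}, \frac{\alpha}{4L_2})$ in the \texttt{Else} clause. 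So I would split the analysis into the points that are automatically inside $\mathcal{X}_k$ by construction (the re-solved $x_{k,t}$ and $x_{k,0}$ itself), and the remaining iterates.

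The core estimate: for any $x \in \{x_{k,s}\}_{s=0}^{T_k} \cup \{w^x_k\}$ that is not forced into $\mathcal{X}_k$ by an explicit projection, Lemma \ref{lemma:LeqFx0} gives $\hat\phi_{k(x)} \le \hat\phi_{k(x_{k,0})} + \chi + \delta_x + 4\delta_y$; translating from approximate to exact values via Assumption \ref{error}, this yields $\Phi(x) + \alpha\|x - x_{k,0}\|^2 \le \Phi(x_{k,0}) + \chi + \delta_x + 6\delta_y$, hence $\alpha\|x-x_{k,0}\|^2 \le (\Phi(x_{k,0}) - \Phi(x)) + \chi + \delta_x + 6\delta_y$. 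To control the first term I would use the fact that, in an epoch where Line 16 runs, the sufficient-descent test in Line 15 of Algorithm \ref{alg:main} failed, i.e. $\phi_{z^x_k} \ge \phi_{x_{k,0}} - \frac{\alpha^3}{32L_2^2}$, and since $z^x_k$ minimizes $\phi$ over $\{w^x_k\}\cup\{x_{k,t}\}$, this gives $\Phi(x) \ge \Phi(x_{k,0}) - \frac{\alpha^3}{32L_2^2} - 2\delta_y$ for every such $x$. Substituting back, $\alpha\|x-x_{k,0}\|^2 \le \frac{\alpha^3}{32L_2^2} + \chi + \delta_x + 8\delta_y$; then the hypothesis $\chi + \delta_x + 4\delta_y \le \frac{\alpha^3}{32L_2^2}$ (a slightly looser bookkeeping of $\delta_y$ may be needed, absorbed into the stated hypothesis) bounds the right side by $\frac{\alpha^3}{16L_2^2}$, giving $\|x - x_{k,0}\| \le \frac{\alpha}{4L_2}$ after taking square roots — possibly with an extra constant I would track carefully. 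The consequence for the pairwise distances then follows by the triangle inequality: $\|x_{k,s-1}-x_{k,s}\| \le \|x_{k,s-1}-x_{k,0}\| + \|x_{k,0}-x_{k,s}\| \le \frac{\alpha}{4L_2} + \frac{\alpha}{4L_2} = \frac{\alpha}{2L_2}$, and likewise for $\|x_{k,s}-w^x_k\|$.

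The main obstacle I anticipate is the careful bookkeeping of which iterates are covered by Lemma \ref{lemma:LeqFx0} versus which are handled by the explicit projection in Algorithm \ref{alg:Certify}, and in particular whether $x_{k,T_k}$ (explicitly excluded from Lemma \ref{lemma:LeqFx0}) causes trouble — I expect it is fine precisely because when Line 16 is reached, $x_{k,T_k}$ either lies in $\mathcal{X}_k$ by the re-solve or equals $x_{k,0}$, and $w^x_k$ is the relevant "far" point whose bound comes from the $\alpha$-strong convexity argument above. A secondary subtlety is tracking the exact multiple of $\delta_y$ that accumulates through the approximate-to-exact conversions (Assumption \ref{error} is applied several times), which is why the hypothesis is stated with the specific constant $\chi + \delta_x + 4\delta_y \le \frac{\alpha^3}{32L_2^2}$; I would verify that this constant is indeed sufficient after collecting all the $\delta_y$ terms, rather than guessing.
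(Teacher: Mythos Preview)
Your overall strategy is right, but two points need correcting.

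\textbf{The detour through exact values inflates the constant.} You translate Lemma~\ref{lemma:LeqFx0} and the failed Line-15 test into statements about $\Phi$, picking up $2\delta_y$ each time and ending with $\chi+\delta_x+8\delta_y$, which the stated hypothesis $\chi+\delta_x+4\delta_y\le\frac{\alpha^3}{32L_2^2}$ does not cover. The paper avoids this entirely by staying in approximate values: since $\hat\phi_{k(x_{k,0})}=\phi_{x_{k,0}}+\alpha\|x_{k,0}-x_{k,0}\|^2=\phi_{x_{k,0}}$, Lemma~\ref{lemma:LeqFx0} and the failed Line-15 condition (which is already stated in terms of $\phi$) combine \emph{directly}:
\[
\phi_{x_{k,0}}+\chi+\delta_x+4\delta_y \;\ge\; \hat\phi_{k(x)} \;=\; \phi_x+\alpha\|x-x_{k,0}\|^2 \;\ge\; \phi_{x_{k,0}}-\tfrac{\alpha^3}{32L_2^2}+\alpha\|x-x_{k,0}\|^2,
\]
so $\alpha\|x-x_{k,0}\|^2\le \chi+\delta_x+4\delta_y+\tfrac{\alpha^3}{32L_2^2}\le\tfrac{\alpha^3}{16L_2^2}$ with exactly the stated constant. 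No conversion to $\Phi$ is needed.

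\textbf{Your handling of $x_{k,T_k}$ is incomplete for Flag\,$=4$.} You write that $x_{k,T_k}$ ``either lies in $\mathcal{X}_k$ by the re-solve or equals $x_{k,0}$''. That covers Flag\,$=2$, but when Flag\,$=4$ is returned there is no re-solve and $x_{k,T_k}$ need not be $x_{k,0}$. The correct observation (used in the paper) is that Flag\,$=4$ can only be returned after the test in Line~3 of Algorithm~\ref{alg:Certify} \emph{fails}, i.e.\ $\hat\phi_{k(x_{k,T_k})}\le\hat\phi_{k(x_{k,0})}+\chi+2\delta_y$. With this bound in hand, the same strong-convexity argument as above (now with $\chi+2\delta_y$ in place of $\chi+\delta_x+4\delta_y$) gives $\|x_{k,T_k}-x_{k,0}\|\le\frac{\alpha}{4L_2}$. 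Also note that Flag\,$=1$ cannot lead to Line~16, since the condition defining Flag\,$=1$ already guarantees the Line-15 test succeeds; so the only cases for $x_{k,T_k}$ you must cover are Flag\,$=2$ (projection into $\mathcal X_k$) and Flag\,$=4$ (the argument just described).
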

Note that unlike Lemma \ref{lemma:LeqFx0}, Lemma \ref{lemma:distance} also characterizes the last iterate $x_{k,T_k}$.

\begin{proof}
First, consider $x\in \{x_{k,t}\}_{t=0}^{T_k-1}\cup \{w_k^x\}$, excluding the last point $x_{k,T_k}$.  If Line 16 of Algorithm \ref{alg:main} is executed, then  we must have  $\phi_{x}\geq \phi_{x_{k,0}}-\frac{\alpha^3}{32L_2^2}$. Combined with Lemma \ref{lemma:LeqFx0}, we have \vspace{-0.12cm}
\begin{equation*}
    \hat{\phi}_{k(x_{k,0})}+\chi+\delta_x +4\delta_y \geq \hat{\phi}_{k(x)}= \phi_{x} + \alpha\|x-x_{k,0}\|^2  \geq \phi_{x_{k,0}}-\frac{\alpha^3}{32L_2^2} + \alpha\|x-x_{k,0}\|^2.\vspace{-0.12cm}
\end{equation*}
Thus, we have $\alpha\|x-x_{k,0}\|^2\leq \chi + \delta_x + 4\delta_y +\frac{\alpha^3}{32L_2^2}\leq \frac{\alpha^3}{16L_2^2}$, which further indicates that $\|x-x_{k,0}\|\leq\frac{\alpha}{4L_2}$ for any $x\in\left\{x_{k,s}\right\}_{s=0}^{T_k-1}\cup\left\{w^x_k\right\}.$ \vspace{0.15cm}
	 
Next, we discuss the last point $x_{k,T_k}$. Note that when Line 16 is executed, $\text{Flag}=1$ is not possible.  If Algorithm \ref{alg:Certify} returns $\text{Flag}=2$ or $3$,  then Line 6 to Line 9 of Algorithm \ref{alg:Certify} forcefully impose $\|x_{k,T_k}-x_{k,0}\|\leq\frac{\alpha}{4L_2}$. If Algorithm \ref{alg:Certify} returns $\text{Flag}=4$ or $5$, then we know $\hat{\phi}_{k(x_{k,T_k})}\le\hat{\phi}_{k(x_{k,0})}+\chi+2\delta_y$. Together with the necessary condition for executing Line 16 of Algorithm \ref{alg:main}, namely, $\phi_{x_{k,T_k}}\geq \phi_{x_{k,0}}-\frac{\alpha^3}{32L_2^2}$, we can also derive that $\|x_{k,T_k}-x_{k,0}\|\leq\frac{\alpha}{4L_2}$.  Combining the above results proves the lemma. 
\end{proof}
The next lemma describes the iterations of APPA, which involves a quantity $\tilde{D}_k$ that can help detect the negative curvature. The proof can be found in Appendix \ref{Proof:lemma:aapp-intermediate}.

\begin{lemma}
\label{lemma:aapp-intermediate}
Let us reuse the notation $\cX_k=B(x_{k,0},\frac{\alpha}{4L_2})$ from Algorithm \ref{alg:Certify}. For any $x,x'\in\cX_k$, let us define the quantity 
$$\tilde{D}_k(x,x')\!:=\!\hpk(x)\!-\!\hpk(x') \!-\! \nabla\hpk(x')^{\!\top}\!(x\!-\!x')\!-\!\frac{\alpha}{2}\|x\!-\!x'\|^2 \!+\! \delta_x \!+\! \sqrt{\frac{2\delta_x}{\gamma\!+\!2\alpha}}\cdot\ell\|x\!-\!x'\|.$$
Suppose we set $\gamma\geq\ell$. Then, for any $\txkm$, $x_{k,t}$ generated by Algorithm \ref{alg:main}, and any point $\bar x\in\cX_k$, the following inequality holds
\begin{align}
    &\hpk(\bar x) \geq  \hpk(x_{k,t}) \!-\! 2\gamma(\bar x \!-\!\txkm)^{\top}(x_{k,t} \!-\! \txkm) \!+\! 2\gamma\|x_{k,t}\!-\!\txkm\|^2\nonumber\\
    &\qquad + \frac{\alpha}{4}\|\bar x \!-\! x_{k,t}\|^2 \!+\! \tilde{D}_k(\bar x, x_{k,t}) \!-\! 11\kappa_x\delta_x \!-\! (2\ell+L_1+\alpha)\sqrt{\frac{2\delta_x}{\gamma\!+\!2\alpha}}\cdot\|\bar x \!-\! x_{k,t}\|.
\end{align} 
\end{lemma}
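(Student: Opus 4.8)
The target statement is Lemma~\ref{lemma:aapp-intermediate}, and the plan is to derive the claimed lower bound on $\hpk(\bar x)$ by combining the approximate optimality of $x_{k,t}$ for the inexact proximal subproblem (Line~8 of Algorithm~\ref{alg:main}) with a convexity-type inequality for $\hpk$ that holds because $\gamma \ge \ell$ makes $\hpk(\cdot) + \gamma\|\cdot - \txkm\|^2$ strongly convex. Concretely, the proximal subproblem objective is $h_{k,t}(x) := \hpk(x) + \gamma\|x - \txkm\|^2$, which is $(\gamma + 2\alpha - \ell)$-strongly convex (since $\hpk$ is $\alpha$-weakly... wait, $\hpk = \Phi + \alpha\|x - p_{k-1}\|^2$ is $(\alpha)$-strongly convex plus $\Phi$ which is $\ell$-weakly convex, so $h_{k,t}$ is at least $(\gamma + 2\alpha - \ell)$-strongly convex, hence at least $2\alpha$-strongly convex when $\gamma \ge \ell$). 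The $\delta_x$-approximate minimizer $x_{k,t}$ then satisfies $\|x_{k,t} - x^*_{k,t}\| \le \sqrt{2\delta_x/(\gamma + 2\alpha - \ell)} \le \sqrt{2\delta_x/(\gamma + 2\alpha)}$... I should be careful here; more realistically the bound $\sqrt{2\delta_x/(\gamma+2\alpha)}$ appearing in $\tilde D_k$ suggests using $2\alpha + \gamma$ after absorbing the $-\ell$ via $\gamma \ge \ell$ differently, so I would track this constant carefully but not belabor it in the sketch.

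The main steps, in order, would be: (i) Write the first-order/strong-convexity inequality for $h_{k,t}$ at its exact minimizer $x^*_{k,t}$ versus any $\bar x \in \cX_k$, namely $h_{k,t}(\bar x) \ge h_{k,t}(x^*_{k,t}) + \tfrac{\gamma + 2\alpha - \ell}{2}\|\bar x - x^*_{k,t}\|^2$, and combine with $h_{k,t}(x_{k,t}) \le h_{k,t}(x^*_{k,t}) + \delta_x$ to get a comparison between $h_{k,t}(\bar x)$ and $h_{k,t}(x_{k,t})$. (ii) Expand $h_{k,t}(\bar x) - h_{k,t}(x_{k,t}) = \hpk(\bar x) - \hpk(x_{k,t}) + \gamma\|\bar x - \txkm\|^2 - \gamma\|x_{k,t} - \txkm\|^2$, and rewrite the difference of the quadratic proximity terms using the identity $\|\bar x - \txkm\|^2 - \|x_{k,t} - \txkm\|^2 = \|\bar x - x_{k,t}\|^2 + 2(\bar x - x_{k,t})^\top(x_{k,t} - \txkm)$; this produces the $-2\gamma(\bar x - \txkm)^\top(x_{k,t} - \txkm) + 2\gamma\|x_{k,t} - \txkm\|^2$ terms after also moving a $\gamma\|\bar x - x_{k,t}\|^2$ across (one copy of $\gamma\|\bar x - x_{k,t}\|^2$ will get matched against the $\tfrac{\alpha}{4}\|\bar x - x_{k,t}\|^2$ target plus slack, and the strong-convexity term $\tfrac{\cdot}{2}\|\bar x - x^*_{k,t}\|^2$ will be related to $\|\bar x - x_{k,t}\|^2$ via triangle inequality at the cost of $\|x_{k,t} - x^*_{k,t}\|$ terms). (iii) Insert the definition of $\tilde D_k(\bar x, x_{k,t})$: the gap between $\hpk(\bar x) - \hpk(x_{k,t})$ and the quantity $\nabla\hpk(x_{k,t})^\top(\bar x - x_{k,t}) + \tfrac{\alpha}{2}\|\bar x - x_{k,t}\|^2$ is exactly $\tilde D_k$ minus the two correction terms $\delta_x + \sqrt{2\delta_x/(\gamma+2\alpha)}\cdot\ell\|\bar x - x_{k,t}\|$; so rewriting in terms of $\tilde D_k$ reintroduces those corrections, which must then be dominated by the $11\kappa_x\delta_x$ and $(2\ell + L_1 + \alpha)\sqrt{2\delta_x/(\gamma+2\alpha)}\cdot\|\bar x - x_{k,t}\|$ slack in the conclusion. (iv) Throughout, every occurrence of the exact minimizer $x^*_{k,t}$ and the exact gradient $\nabla\hpk(x^*_{k,t}) = 0$ condition (which at $x^*_{k,t}$ gives $\nabla\hpk(x^*_{k,t}) + 2\gamma(x^*_{k,t} - \txkm) = 0$) must be converted back to the computable point $x_{k,t}$, paying an error proportional to $L_1\|x_{k,t} - x^*_{k,t}\| \le L_1\sqrt{2\delta_x/(\gamma+2\alpha)}$ each time gradients are compared, and an $\alpha$- or $\ell$-Lipschitz cost when the weak-convexity/smoothness bounds on $\hpk$ are applied on the ball $\cX_k$; collecting all these gives the $11\kappa_x\delta_x$ term (the $\kappa_x = \gamma/\alpha$ factor entering because $2\gamma\|x_{k,t} - x^*_{k,t}\|^2 \le 2\gamma \cdot 2\delta_x/(\gamma+2\alpha) \le 4\delta_x \cdot \kappa_x \cdot (\text{const})$-type estimates), and the composite constant $(2\ell + L_1 + \alpha)$ multiplying the linear-in-distance error.

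I would organize the computation so that all ``clean'' terms (the ones appearing verbatim in the statement) land first and all ``error'' terms are collected at the end, then bound the error sum by the two slack expressions using $\gamma \ge \ell$, $\kappa_x = \gamma/\alpha$, and crude triangle inequalities; the restriction $\bar x, x_{k,t} \in \cX_k$ is used only to guarantee $\hpk$ is genuinely differentiable there with the stated smoothness constants so the weak-convexity bound $\hpk(\bar x) \ge \hpk(x_{k,t}) + \nabla\hpk(x_{k,t})^\top(\bar x - x_{k,t}) - \tfrac{\ell}{2}\|\bar x - x_{k,t}\|^2$ and the $L_1$-smoothness bound both apply. The main obstacle I anticipate is precise bookkeeping of the error terms: there are several places where $x_{k,t}$ is substituted for $x^*_{k,t}$ (in the quadratic proximity term, in the gradient, in the function value), each contributing an error that is either $O(\delta_x)$, $O(\sqrt{\delta_x})\cdot\|\bar x - x_{k,t}\|$, or $O(\sqrt{\delta_x})\cdot(\text{fixed radius})$, and getting the constant exactly $11\kappa_x$ (rather than a loose absolute constant times $\kappa_x$) requires carefully choosing how to split mixed terms like $2\gamma\|x_{k,t} - \txkm\|\cdot\|x_{k,t} - x^*_{k,t}\|$ via Young's inequality. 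I expect no conceptual difficulty beyond this — it is fundamentally the standard ``inexact proximal point'' estimate with the twist that the lower bound is anchored at an arbitrary $\bar x \in \cX_k$ so that $\tilde D_k(\bar x, x_{k,t})$, which is nonnegative exactly when $\hpk$ behaves convexly between $\bar x$ and $x_{k,t}$, survives on the right-hand side as the curvature detector.
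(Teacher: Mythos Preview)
Your plan has the right ingredients---bound $\|x_{k,t}-x_{k,t}^*\|$ via strong convexity of the subproblem, use first-order optimality at $x_{k,t}^*$, shift from $x_{k,t}^*$ to $x_{k,t}$ paying smoothness errors, and collect---and this is what the paper does. Two points need correction.

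First, you misidentify why $\bar x\in\cX_k$ is required. Differentiability and the $L_1$/$\ell$-smoothness bounds on $\hpk$ hold globally under Assumption~\ref{assu:Phi}; that is not the issue. The point is that $x_{k,t}$ is not always the inexact solution of an \emph{unconstrained} subproblem: when $t=T_k$ and Line~9 of Algorithm~\ref{alg:Certify} fires (the Flag~$=2,3$ branch), the subproblem is $\min_{x\in\cX_k}\hpk(x)+\gamma\|x-\txkm\|^2$. The first-order condition at $x_{k,t}^*$ is then the variational inequality $\langle\nabla\hpk(x_{k,t}^*)+2\gamma(x_{k,t}^*-\txkm),\,\bar x-x_{k,t}^*\rangle\ge 0$, which holds only for $\bar x\in\cX_k$. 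Your step~(iv) states the optimality condition as an equality; you must cover both cases.

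Second, your step~(i) is less useful than you think, and in fact the paper does not use the bare strong-convexity inequality $h_{k,t}(\bar x)\ge h_{k,t}(x_{k,t}^*)+\tfrac{c}{2}\|\bar x-x_{k,t}^*\|^2$ at all. Instead it uses the KKT/VI directly to preserve the full Bregman remainder:
\[
h_{k,t}(\bar x)-h_{k,t}(x_{k,t}^*)\ \ge\ \hpk(\bar x)-\hpk(x_{k,t}^*)-\nabla\hpk(x_{k,t}^*)^\top(\bar x-x_{k,t}^*)+\gamma\|\bar x-x_{k,t}^*\|^2,
\]
then splits $\gamma=(\gamma+\tfrac{\alpha}{2})-\tfrac{\alpha}{2}$ so the $-\tfrac{\alpha}{2}\|\bar x-x_{k,t}^*\|^2$ joins the Bregman part. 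After shifting the base point of this Bregman term from $x_{k,t}^*$ to $x_{k,t}$ (costing one $\delta_x$ and one $(\ell+L_1+\alpha)\sqrt{\cdot}\,\|\bar x-x_{k,t}\|$), it \emph{is} $\tilde D_k(\bar x,x_{k,t})$ up to the built-in corrections. Your route---strong convexity first, then add and subtract $\tilde D_k$, then control the leftover $\nabla\hpk(x_{k,t})^\top(\bar x-x_{k,t})$ via approximate KKT---can be made to work, but step~(i) then contributes nothing: all the content is in the approximate KKT of step~(iv), and the bookkeeping is harder. (Also, the subproblem is $(2\gamma+2\alpha-\ell)$-strongly convex, not $(\gamma+2\alpha-\ell)$; with $\gamma\ge\ell$ this gives exactly the $\gamma+2\alpha$ appearing in $\sqrt{2\delta_x/(\gamma+2\alpha)}$.) The $8\kappa_x\delta_x$ (hence $11\kappa_x\delta_x$) comes from the Young split $2(\gamma+\tfrac{\alpha}{2})(\bar x-x_{k,t})^\top(x_{k,t}-x_{k,t}^*)\ge -\tfrac{\alpha}{4}\|\bar x-x_{k,t}\|^2-\tfrac{(2\gamma+\alpha)^2}{\alpha}\|x_{k,t}-x_{k,t}^*\|^2$, exactly as you guessed.
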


\noindent To understand the quantity $\tilde{D}_k(x,x')$, assume $\delta_x=0$. Then, the quantity $\tilde{D}_k(x,x')$ is non-negative if $\hat{\Phi}_k$ is $\alpha$-strongly convex. Having $\tilde{D}_k(x,x')<0$ indicates the existence of the negative curvature in $\nabla^2\hat\Phi$. When $\delta_x>0$, the last two terms are added to make $\tilde{D}_k(x,x')$ inclusive with the inexact solution of the subproblems. With the above lemma, if $\hat{\Phi}_k$ is strongly convex, we expect to have linear convergence when $\tilde{D}_k(x,x')\geq 0$ always hold for the iteration points. As a result, the following Lemma depicts one epoch of Algorithm \ref{alg:main}. See analysis in Appendix \ref{proof:lemma:aapp}. 

\begin{lemma}\label{lemma:aapp}
Consider the $k$-th epoch  $\{x_{k,t}\}_{t=0}^{T_k}$ generated by Algorithm \ref{alg:main}. For any point $\bar x\in\cX_k$, let $d>0$ be a constant s.t. $d\geq\max\left\{\|x_{k,s-1}-x_{k,s}\|,\|\bar x-x_{k,s}\|\right\}$ for $s = 1,2,\cdots,T_k$. For any $t\leq T_k$, if $\tilde{D}_k(x_{k,s-1},x_{k,s})\geq0$ and $\tilde{D}_k(\bar x, x_{k,s})\geq0$ hold for $1\leq s\leq t$, then 
$$\hpk(x_{k,t}) - \hpk(\bar x) \leq \left(1-\frac{1}{6\sqrt{\kappa_x}}\right)^t\cdot\left(\hpk(x_{k,0})-\hpk(\bar x) + \frac{\alpha}{4}\|\bar x-x_{k,0}\|^2\right) + \chi.$$
\end{lemma}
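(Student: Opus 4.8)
The plan is to prove Lemma~\ref{lemma:aapp} by induction on $t$, using Lemma~\ref{lemma:aapp-intermediate} as the per-iteration building block and the estimate-sequence / potential-function technique that underlies accelerated proximal point methods. First I would fix the point $\bar x\in\cX_k$ and define, for each $t$, a ``Lyapunov'' quantity along the lines of
\[
\Lambda_t := \hpk(x_{k,t}) - \hpk(\bar x) + 2\gamma\,\|\bar x - \txk\|^2,
\]
adjusted by a $\chi$-type slack term. The role of the momentum update $\txk = x_{k,t} + \omega(x_{k,t}-x_{k,t-1})$ with $\omega = \frac{2\sqrt{\kappa_x}-1}{2\sqrt{\kappa_x}+1}$ is exactly to make $\Lambda_t$ contract by the factor $1-\tfrac{1}{6\sqrt{\kappa_x}}$ per step. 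The hypotheses $\tilde D_k(x_{k,s-1},x_{k,s})\ge 0$ and $\tilde D_k(\bar x,x_{k,s})\ge 0$ are what let us pretend $\hpk$ is $\alpha$-strongly convex along the relevant segments; together with $\gamma\ge\ell$ they convert the smoothness/inexactness error terms in Lemma~\ref{lemma:aapp-intermediate} into something controllable.

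The core of the argument is the one-step recursion. Applying Lemma~\ref{lemma:aapp-intermediate} at step $s$ with the chosen $\bar x$ gives a lower bound on $\hpk(\bar x)$ involving $\hpk(x_{k,s})$, the cross term $-2\gamma(\bar x-\txkm)^\top(x_{k,s}-\txkm)$, the quadratic $2\gamma\|x_{k,s}-\txkm\|^2 + \tfrac{\alpha}{4}\|\bar x - x_{k,s}\|^2$, the nonnegative quantity $\tilde D_k(\bar x,x_{k,s})$, and the additive error $-11\kappa_x\delta_x - (2\ell+L_1+\alpha)\sqrt{\tfrac{2\delta_x}{\gamma+2\alpha}}\,\|\bar x - x_{k,s}\|$. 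I would also need the analogous consequence of $\tilde D_k(x_{k,s-1},x_{k,s})\ge 0$, i.e. an ``$\alpha$-strong convexity up to $\delta_x$'' inequality relating $\hpk(x_{k,s-1})$ and $\hpk(x_{k,s})$, to handle the progress from the previous iterate. Then I complete the square in the cross term: writing $-2\gamma(\bar x-\txkm)^\top(x_{k,s}-\txkm) + 2\gamma\|x_{k,s}-\txkm\|^2$ as $2\gamma\|\bar x - \txkm\|^2 - 2\gamma\|\bar x - x_{k,s}\|^2 + (\text{lower order})$ — actually more precisely $-2\gamma\langle \bar x - x_{k,s}, x_{k,s}-\txkm\rangle$ after regrouping — and using the definition of $\txk$ to relate $\|\bar x - \txk\|^2$ to $\|\bar x - x_{k,s}\|^2$ and $\|\bar x - x_{k,s-1}\|^2$ via the momentum coefficient. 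The bound $d\ge\max\{\|x_{k,s-1}-x_{k,s}\|,\|\bar x - x_{k,s}\|\}$ is used to turn every stray term of the form $(\cdots)\sqrt{\tfrac{2\delta_x}{\gamma+2\alpha}}\,\|\bar x - x_{k,s}\|$ or $\ell\sqrt{\tfrac{2\delta_x}{\gamma+2\alpha}}\,\|x_{k,s-1}-x_{k,s}\|$ into a constant multiple of $\sqrt{\tfrac{2\delta_x}{\gamma+2\alpha}}\,d$, and together with the $11\kappa_x\delta_x$ terms these aggregate (over the at most $6\sqrt{\kappa_x}$-scaled geometric sum) into precisely the constant $\chi = 6\sqrt{\kappa_x}\big(11\kappa_x\delta_x + (2\ell+L_1+\alpha)\sqrt{\tfrac{2\delta_x}{\gamma+2\alpha}}\,d\big)$ declared in Algorithm~\ref{alg:main}. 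Summing the contracted recursion geometrically, $\sum_{s} (1-\tfrac{1}{6\sqrt{\kappa_x}})^{t-s} \le 6\sqrt{\kappa_x}$, yields the claimed $\chi$ additive term and the factor $(1-\tfrac{1}{6\sqrt{\kappa_x}})^t$ in front of $\hpk(x_{k,0})-\hpk(\bar x)+\tfrac{\alpha}{4}\|\bar x - x_{k,0}\|^2$ (using $\txk[k,0]=x_{k,0}$ and $x_{k,0}=x_{k,-1}$, so $2\gamma\|\bar x - \txk[k,0]\|^2$ at $t=0$ must be absorbed by rescaling the potential — I expect the initial potential is written with coefficient $\tfrac{\alpha}{4}$ rather than $2\gamma$, which is consistent with $\omega$ making the first step special).

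The main obstacle I anticipate is bookkeeping the inexactness terms so that they telescope into exactly $\chi$ and no worse: the factor $6\sqrt{\kappa_x}$ from the geometric sum, the factor $\kappa_x = \gamma/\alpha$ hidden in the $11\kappa_x\delta_x$ term of Lemma~\ref{lemma:aapp-intermediate}, and the $\sqrt{2\delta_x/(\gamma+2\alpha)}$-scaled displacement terms all have to be matched against the precise definition of $\chi$, and any mismatch in constants (e.g. $11$ vs.\ something else, or $d$ vs.\ $2d$) would require adjusting the statement. A secondary subtlety is verifying that all the points $x_{k,s-1},x_{k,s},\bar x$ indeed lie in $\cX_k = B(x_{k,0},\tfrac{\alpha}{4L_2})$ so that Lemma~\ref{lemma:aapp-intermediate} is applicable at every step — but this is exactly what the recomputation step (Line~8--9 of Algorithm~\ref{alg:Certify}, invoked when $\mathrm{Flag}=2$) and the hypothesis $\bar x\in\cX_k$ guarantee, and Lemma~\ref{lemma:distance} covers the iterates in the relevant regime; I would state this as a preliminary remark. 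Finally, I would double-check that the momentum algebra produces the contraction factor $1-\tfrac{1}{6\sqrt{\kappa_x}}$ (as opposed to the textbook $1-\tfrac{1}{\sqrt{\kappa_x}}$) — the slack of $6$ is presumably what buys room to absorb the inexact-subproblem and non-strong-convexity errors, so I would carry a free constant through the square-completion step and fix it at the end to make both the contraction and the error absorption work simultaneously.
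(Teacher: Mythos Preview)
Your high-level plan is right: apply Lemma~\ref{lemma:aapp-intermediate} at each step, drop the $\tilde D_k$ terms using the hypotheses, bound the residual error by $\chi_0 := 11\kappa_x\delta_x + (2\ell+L_1+\alpha)\sqrt{\tfrac{2\delta_x}{\gamma+2\alpha}}\,d$ via the $d$-bound, establish a one-step contraction of a Lyapunov potential by the factor $1-\tfrac{1}{6\sqrt{\kappa_x}}$, and sum geometrically so that $6\sqrt{\kappa_x}\chi_0 = \chi$. The paper does exactly this.

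The gap is in your choice of potential. The quantity $\hpk(x_{k,t})-\hpk(\bar x)+2\gamma\|\bar x-\txk\|^2$ does not contract cleanly, and you already sensed this when you noted the mismatch between the $2\gamma$ coefficient and the $\tfrac{\alpha}{4}$ in the statement. The paper introduces an auxiliary sequence
\[
z_{k,s} := \tilde x_{k,s} + 2\sqrt{\kappa_x}\,(\tilde x_{k,s}-x_{k,s}),
\]
and takes the potential $\hpk(x_{k,s})-\hpk(\bar x)+\tfrac{\alpha}{4}\|z_{k,s}-\bar x\|^2$. With the momentum choice $\omega=\tfrac{2\sqrt{\kappa_x}-1}{2\sqrt{\kappa_x}+1}$ one checks $z_{k,s} = (1-\tfrac{1}{2\sqrt{\kappa_x}})z_{k,s-1} + 2\sqrt{\kappa_x}(x_{k,s}-\tilde x_{k,s-1}) + \tfrac{1}{2\sqrt{\kappa_x}}\tilde x_{k,s-1}$, and since $\tilde x_{k,0}=x_{k,0}$ one has $z_{k,0}=x_{k,0}$, which is exactly why the initial term is $\tfrac{\alpha}{4}\|\bar x - x_{k,0}\|^2$. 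The other ingredient you are missing is that Lemma~\ref{lemma:aapp-intermediate} is applied \emph{twice} per step, once with $\bar x$ and once with $x_{k,s-1}$, and the two inequalities are combined with weights $\tfrac{1}{2\sqrt{\kappa_x}}$ and $1-\tfrac{1}{2\sqrt{\kappa_x}}$; this convex combination is what produces the cross term matching the $z$-recursion. The contraction factor $1-\tfrac{1}{6\sqrt{\kappa_x}}$ (rather than the textbook $1-\tfrac{1}{\sqrt{\kappa_x}}$) then falls out of a Young-inequality step $(1-\tfrac{1}{2\sqrt{\kappa_x}})^2(1+\tfrac{5}{8\sqrt{\kappa_x}-5})\le 1-\tfrac{1}{6\sqrt{\kappa_x}}$ when expanding $\|z_{k,s}-\bar x\|^2$.

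One minor point: your worry about verifying $x_{k,s}\in\cX_k$ is unnecessary here. Lemma~\ref{lemma:aapp-intermediate} is stated for arbitrary iterates $x_{k,t},\tilde x_{k,t-1}$ produced by the algorithm (whether the inner subproblem was over $\RR^n$ or $\cX_k$) and only requires $\bar x\in\cX_k$, which is a hypothesis of Lemma~\ref{lemma:aapp}.
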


If the situation in the above lemma happens, that is $\tilde{D}_k(x,x')\geq 0$ always holds for the iteration points, a fast linear convergence happens at Flag=3,5, and enough descent is guaranteed. However, if it does not happen, then we observe a certificated nonconvex pair along the trajectory. Then, we will execute Line 16.

The following lemma discusses the situation when the progress checking returns the $\text{Flag}=$ 2 or 4. In these situations, it is guaranteed to find a nontrivial violation of convexity to perform negative curvature descent, see analysis in Appendix \ref{proof:lemma:Flag-24}.

\begin{lemma}\label{lemma:Flag-24}
Suppose we set the parameters s.t. $\chi + \delta_x + 4\delta_y\leq\frac{\alpha^3}{32L_2^2}$, $\delta_x\ge 2\delta_y$ and $\ell\sqrt{\frac{2 \delta_x}{\gamma+2 \alpha}} \geq 2 \Delta_y$.  In the $k$-th epoch of the Algorithm \ref{alg:main}, suppose Line 16 is executed. Then Algorithm \ref{alg:Exploit-Ncvx} will find some $u,v\in\cX_k$ s.t. $\Phi(u)\leq \Phi(x_{k,0}) + \cE+\delta_x+6\delta_y$ and 
\[\hat{\Phi}_k(u)-\hat{\Phi}_k(v)-\nabla \hat{\Phi}_k(v)^{\top}(u-v)-\frac{\alpha}{2}\|u-v\|^2<0,\]
which is equivalent to 
\[\Phi_k(u)-\Phi_k(v)-\nabla \Phi_k(v)^{\top}(u-v)+\frac{\alpha}{2}\|u-v\|^2<0.\]
\end{lemma}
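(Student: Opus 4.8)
The plan is to prove four things: that Algorithm \ref{alg:Exploit-Ncvx} actually exits its \texttt{for}-loop with a pair $(u,v)$; that this pair lies in $\cX_k$; that $\Phi(u)$ obeys the stated bound; and that $(u,v)$ witnesses the displayed convexity violation. The last three are quick consequences of lemmas already established, so the substance of the argument is the first claim.

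I would start from the elementary observation that, by Assumption \ref{error} and the definitions $\hat{\phi}_{k(x)}=\phi_x+\alpha\|x-p_{k-1}\|^2$, $\hat{g}_{k(x)}=g_x+2\alpha(x-p_{k-1})$, the computed quantity $\zeta(x,x')$ differs from the exact second-order slack $\hpk(x)-\hpk(x')-\nabla\hpk(x')^\top(x-x')-\frac{\alpha}{2}\|x-x'\|^2$ by at most $2\delta_y+\Delta_y\|x-x'\|$. Hence, the moment a break condition $\zeta(u,v)<-2\delta_y-\Delta_y\|u-v\|$ fires, the exact slack is strictly negative; expanding the proximity term $\alpha\|\cdot-p_{k-1}\|^2$ so that its cross terms collapse to $+\alpha\|u-v\|^2$ converts this into the equivalent statement for $\Phi$. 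For the localization and the function-value bound, I would invoke Lemma \ref{lemma:distance} (applicable because Line 16 is executed and $\chi+\delta_x+4\delta_y\le\frac{\alpha^3}{32L_2^2}$), which places every candidate point $\{x_{k,s}\}_{s=0}^{T_k}\cup\{w^x_k\}$ inside $\cX_k$ and bounds $\|x_{k,s-1}-x_{k,s}\|,\|w^x_k-x_{k,s}\|$ by $\frac{\alpha}{2L_2}$; in particular $u,v\in\cX_k$. Since $u$ is always one of $x_{k,0},\dots,x_{k,T_k-1}$ or $w^x_k$, Lemma \ref{lemma:LeqFx0} gives $\hat{\phi}_{k(u)}\le\hat{\phi}_{k(x_{k,0})}+\chi+\delta_x+4\delta_y$, and converting $\hat{\phi}_k$ to $\Phi$ (losing $\delta_y$ on each side, using $\hpk(x_{k,0})=\Phi(x_{k,0})$) yields $\Phi(u)\le\Phi(x_{k,0})+\chi+\delta_x+6\delta_y$.

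The core of the proof is showing the \texttt{for}-loop cannot run to completion, which I would argue by contradiction. If it did, then for every $s$ we would have $\zeta(x_{k,s-1},x_{k,s})\ge-2\delta_y-\Delta_y\|x_{k,s-1}-x_{k,s}\|$ and $\zeta(w^x_k,x_{k,s})\ge-2\delta_y-\Delta_y\|w^x_k-x_{k,s}\|$, and combining this with the error estimate above and the parameter hypotheses $\delta_x\ge2\delta_y$, $\ell\sqrt{2\delta_x/(\gamma+2\alpha)}\ge2\Delta_y$ one checks $\tilde D_k(x_{k,s-1},x_{k,s})\ge0$ and $\tilde D_k(w^x_k,x_{k,s})\ge0$ for all $s\le T_k$. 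Then Lemma \ref{lemma:aapp} with $\bar x=w^x_k\in\cX_k$, $t=T_k$ and $d=\frac{\alpha}{2L_2}$ yields
\[\hpk(x_{k,T_k})-\hpk(w^x_k)\le\Bigl(1-\tfrac{1}{6\sqrt{\kappa_x}}\Bigr)^{T_k}\Bigl(\hpk(x_{k,0})-\hpk(w^x_k)+\tfrac{\alpha}{4}\|w^x_k-x_{k,0}\|^2\Bigr)+\chi,\]
and I would finish by contradicting it in the two \texttt{Certify} outcomes that can reach Line 16. If $\text{Flag}=2$ then $w^x_k=x_{k,0}$ and the certifying test guarantees $\hat{\phi}_{k(x_{k,T_k})}>\hat{\phi}_{k(x_{k,0})}+\chi+2\delta_y$, hence $\hpk(x_{k,T_k})-\hpk(x_{k,0})>\chi$, while the displayed bound collapses to $\hpk(x_{k,T_k})-\hpk(x_{k,0})\le\chi$. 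If $\text{Flag}=4$, I would use that $w^x_k$ is a $\delta_x$-approximate minimizer of $\hpk(\cdot)+\gamma\|\cdot-x_{k,T_k}\|^2$ to get $\gamma\|w^x_k-x_{k,T_k}\|^2\le\hpk(x_{k,T_k})-\hpk(w^x_k)+\delta_x$, combine this with the $\text{Flag}=4$ inequality $\gamma\|w^x_k-x_{k,T_k}\|^2>(1-\frac{1}{6\sqrt{\kappa_x}})^{T_k}E_k+\chi+\delta_x+2\delta_y$ and the displayed bound, watch the $\chi$ and $\gamma\|w^x_k-x_{k,T_k}\|^2$ terms cancel, and be left with $(1-\frac{1}{6\sqrt{\kappa_x}})^{T_k}(\hpk(x_{k,0})-\hpk(w^x_k)+\frac{\alpha}{4}\|w^x_k-x_{k,0}\|^2-E_k)>2\delta_y$; since $E_k$ is exactly this accelerated-proximal potential with $\hpk$ replaced by $\hat{\phi}_k$, the parenthesized quantity has magnitude $\le2\delta_y$ by Assumption \ref{error} while the prefactor is $<1$, a contradiction. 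Therefore the loop halts with some $(u,v)$, and together with the earlier observations this proves the lemma.

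The main obstacle is this last step, and especially the $\text{Flag}=4$ sub-case: it requires simultaneously calling on Lemmas \ref{lemma:distance}, \ref{lemma:LeqFx0} and \ref{lemma:aapp} (itself built on Lemma \ref{lemma:aapp-intermediate}), keeping careful track of the $\delta_x,\delta_y,\Delta_y$ error terms so that they line up exactly with the parameter hypotheses when verifying $\tilde D_k\ge0$, and spotting the telescoping cancellation that forces $E_k$ to coincide, up to $2\delta_y$, with the potential $\hpk(x_{k,0})-\hpk(w^x_k)+\frac{\alpha}{4}\|w^x_k-x_{k,0}\|^2$.
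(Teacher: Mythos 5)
Your proposal is correct and follows essentially the same route as the paper's own proof: Lemma \ref{lemma:distance} for localization, Lemma \ref{lemma:LeqFx0} for the function-value bound, the $2\delta_y + \Delta_y\|\cdot\|$ error estimate relating $\zeta$ to the exact slack, and a contradiction via Lemma \ref{lemma:aapp} split into the Flag $=2$ and Flag $=4$ certifying outcomes. The only cosmetic difference is that you argue the contrapositive ("if the loop never breaks then every $\tilde D_k\ge 0$") while the paper argues directly ("some $\tilde D_k<0$ forces a break"), and in the Flag $=4$ case you rearrange before invoking $|\hpk(x_{k,0})-\hpk(w^x_k)+\tfrac{\alpha}{4}\|w^x_k-x_{k,0}\|^2-E_k|\le 2\delta_y$ rather than substituting $E_k+2\delta_y$ upstream — both arrive at the identical contradiction.
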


\noindent We are guaranteed to find a nontrivial violation of convexity. The following lemma shows that a guarantee of the function value decreases by using negative curvature.

\begin{lemma}{\cite[Lemma 1]{carmon2017convex}}\label{lemma:NC}
Suppose $\nabla^2\Phi$ is $\!L_2$-Lipschitz, $\alpha\!>\!0$, and $u,v$ satisfy 
\[\Phi_k(u)-\Phi_k(v)-\nabla \Phi_k(v)^{\top}(u-v)+\frac{\alpha}{2}\|u-v\|^2<0.\]
If $\|u-v\|\leq \frac{\alpha}{2L_2}$, and $x^1=u + \eta\cdot\frac{(u-v)}{\|u-v\|}, x^2=u - \eta\cdot\frac{(u-v)}{\|u-v\|}$, then 
 we have
\[\min\{\Phi(x^1),\Phi(x^2)\}\leq \Phi(u)-\frac{\alpha\eta^2}{12} \quad\mbox{for}\quad \forall\eta\leq \frac{\alpha}{L_2}.\]
\end{lemma}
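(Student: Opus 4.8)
The plan is to reproduce the argument from \cite{carmon2017convex}. Set $w := \frac{u-v}{\|u-v\|}$, so that $x^1 = u + \eta w$ and $x^2 = u - \eta w$, and both lie on the line through $u$ in direction $w$. The hypothesis
\[
\Phi(u)-\Phi(v)-\nabla\Phi(v)^{\top}(u-v)+\tfrac{\alpha}{2}\|u-v\|^2<0
\]
says that $\Phi$ ``curves down'' at scale $\|u-v\|$ along direction $w$; the idea is to convert this into a statement about $\nabla^2\Phi$ at $u$, and then use a cubic Taylor expansion at $u$ to produce the descent at $x^1$ or $x^2$.

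First I would extract a negative curvature certificate: I would Taylor-expand $\Phi(v)$ around $u$ with integral remainder,
\[
\Phi(v) = \Phi(u) + \nabla\Phi(u)^{\top}(v-u) + \tfrac12 (v-u)^{\top}\nabla^2\Phi(u)(v-u) + R_1,
\]
and similarly expand $\nabla\Phi(v)$ around $u$,
\[
\nabla\Phi(v) = \nabla\Phi(u) + \nabla^2\Phi(u)(v-u) + R_2,
\]
where $\|R_1\|\le \tfrac{L_2}{6}\|u-v\|^3$ and $\|R_2\|\le \tfrac{L_2}{2}\|u-v\|^2$ by $L_2$-Lipschitzness of the Hessian. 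Substituting these into the hypothesis, the gradient terms $\nabla\Phi(u)$ cancel and the Hessian quadratic terms combine; after collecting I expect to get
\[
-\tfrac12 (u-v)^{\top}\nabla^2\Phi(u)(u-v) + \tfrac{\alpha}{2}\|u-v\|^2 < O(L_2)\|u-v\|^3,
\]
i.e. $w^{\top}\nabla^2\Phi(u)w \ge \alpha - O(L_2\|u-v\|)$. Using $\|u-v\|\le \frac{\alpha}{2L_2}$ this gives a lower bound like $w^{\top}\nabla^2\Phi(u)w \ge \tfrac{\alpha}{2}$, so along direction $w$ the function $\Phi$ has curvature at least $\tfrac{\alpha}{2}$ at $u$. (Wait --- the hypothesis is a \emph{down}-curving statement, so the certificate should be $w^{\top}\nabla^2\Phi(u)w \le -\Omega(\alpha)$; I would recheck the sign by re-expanding carefully, but the magnitude $\Omega(\alpha)$ is the point.)

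Second, with the negative curvature $w^{\top}\nabla^2\Phi(u)w \le -c\alpha$ in hand, I would Taylor-expand $\Phi(x^1)$ and $\Phi(x^2)$ around $u$ to third order:
\[
\Phi(x^{1,2}) \le \Phi(u) \pm \eta\,\nabla\Phi(u)^{\top}w + \tfrac{\eta^2}{2}\,w^{\top}\nabla^2\Phi(u)w + \tfrac{L_2}{6}\eta^3.
\]
Averaging the two (or taking the minimum) kills the first-order term, leaving
\[
\min\{\Phi(x^1),\Phi(x^2)\} \le \Phi(u) + \tfrac{\eta^2}{2}w^{\top}\nabla^2\Phi(u)w + \tfrac{L_2}{6}\eta^3 \le \Phi(u) - \tfrac{c\alpha}{2}\eta^2 + \tfrac{L_2}{6}\eta^3.
\]
Finally, using $\eta\le \frac{\alpha}{L_2}$ to bound the cubic term $\tfrac{L_2}{6}\eta^3 \le \tfrac{\alpha}{6L_2}\cdot L_2 \eta^2 \cdot \tfrac{1}{\,}$ --- more precisely $\tfrac{L_2}{6}\eta^3 = \tfrac{L_2\eta}{6}\eta^2 \le \tfrac{\alpha}{6}\eta^2$ --- and tracking the exact constant $c$ from step one, the two $\eta^2$ terms combine to give the claimed bound $\Phi(u) - \tfrac{\alpha\eta^2}{12}$.

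The only real obstacle is bookkeeping: getting the constants in the cubic remainders and in the curvature certificate to line up so that the final coefficient is exactly $\tfrac{1}{12}$ rather than some other constant. Since this lemma is quoted verbatim from \cite{carmon2017convex}, I would in the write-up simply cite that reference rather than redo the constant-chasing, and the sketch above is just to indicate that the proof is elementary (two Taylor expansions plus the two hypotheses) and self-contained.
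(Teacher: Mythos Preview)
Your plan matches the paper exactly at the level of the write-up: the paper gives no proof of this lemma and simply cites \cite[Lemma~1]{carmon2017convex}, which is precisely what you propose to do.

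Your sketch of the underlying argument is also the right one, but the specific pair of Taylor expansions you wrote down is slightly too lossy to recover the constant $\tfrac{1}{12}$. Expanding $\Phi(v)$ and $\nabla\Phi(v)$ separately around $u$ and bounding the remainders by $\tfrac{L_2}{6}\|u-v\|^3$ and $\tfrac{L_2}{2}\|u-v\|^2$ yields, after cancellation, $w^{\top}\nabla^2\Phi(u)w < -\alpha + \tfrac{4L_2}{3}\|u-v\| \le -\tfrac{\alpha}{3}$; plugging $c=\tfrac13$ into your second step gives $\min\{\Phi(x^1),\Phi(x^2)\}\le \Phi(u) - \tfrac{\alpha}{6}\eta^2 + \tfrac{L_2}{6}\eta^3 \le \Phi(u)$, i.e.\ the two $\eta^2$ terms cancel and you get no descent. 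The fix is to treat $\Phi(u)-\Phi(v)-\nabla\Phi(v)^{\top}(u-v)$ as a single integral second-order remainder $\int_0^1(1-t)(u-v)^{\top}\nabla^2\Phi(v+t(u-v))(u-v)\,dt$ and transfer to $\nabla^2\Phi(u)$ inside the integral; this gives the sharper bound $w^{\top}\nabla^2\Phi(u)w < -\alpha + \tfrac{2L_2}{3}\|u-v\| \le -\tfrac{2\alpha}{3}$ (or $-\tfrac{\alpha}{2}$ via the mean-value form), and then your second step produces $-\tfrac{\alpha}{3}\eta^2 + \tfrac{\alpha}{6}\eta^2 = -\tfrac{\alpha}{6}\eta^2 \le -\tfrac{\alpha}{12}\eta^2$ as required. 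Since you already flagged the constant-chasing as the only obstacle and plan to cite the reference anyway, this is a minor point.
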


\noindent Combining Lemma \ref{lemma:Flag-24} and Lemma \ref{lemma:NC} by choosing $\eta=\frac{\alpha}{L_2}$, we obtain
\[\min\{\Phi(x^1),\Phi(x^2)\}\leq \Phi(u)-\frac{\alpha^3}{12L_2^2}\leq \Phi\left(x_{k, 0}\right)+\chi+\delta_x+6 \delta_y-\frac{\alpha^3}{12L_2^2}.\]
Also, given the fact that
\[\Phi(p_k)\leq \phi_{p_k}+\delta_y=\min\{\phi_{x^1},\phi_{x^2}\}+\delta_y\leq \min\{\Phi(x^1),\Phi(x^2)\}+2\delta_y,\]
we know
\(\Phi(p_k)\leq\Phi\left(x_{k, 0}\right)+\chi+\delta_x+8 \delta_y-\frac{\alpha^3}{12L_2^2}.\)
By additionally assuming $\chi+\delta_x+8 \delta_y\leq \frac{\alpha^3}{72L_2^2}$, we have
\(\Phi\left(p_k\right)-\Phi\left(p_{k-1}\right)=\Phi\left(p_k\right)-\Phi\left(x_{k, 0}\right) \leq-\frac{\alpha^3}{72 L_2^2}.\)
As a result, we get the following descent guarantee by exploiting negative curvature.
\begin{lemma}\label{lemma:Exploit-NC-Pair}
Suppose the algorithmic parameters are chosen so that $\chi+\delta_x+8 \delta_y\leq \frac{\alpha^3}{72L_2^2}$, $\chi + \delta_x + 4\delta_y\leq\frac{\alpha^3}{32L_2^2}$, $\delta_x\ge 2\delta_y$ and $\ell\sqrt{\frac{2 \delta_x}{\gamma+2 \alpha}} \geq 2 \Delta_y$.  In the $k$-th epoch of the Algorithm \ref{alg:main}, suppose Line 16 is executed. We have \(\Phi\left(p_k\right)-\Phi\left(p_{k-1}\right)\leq-\frac{\alpha^3}{72 L_2^2}.\)
\end{lemma}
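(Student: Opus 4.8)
The plan is to assemble the claim from three ingredients already in place: Lemma \ref{lemma:Flag-24}, which locates a certified nonconvexity pair inside the small ball $\cX_k=B(x_{k,0},\frac{\alpha}{4L_2})$; Lemma \ref{lemma:NC}, which converts such a pair into a guaranteed function-value decrease along the two probe points of Algorithm \ref{alg:Exploit-Ncvx}; and the error bound of Assumption \ref{error}, which relates $\Phi$ to its approximation $\phi$. The parameter conditions assumed in the statement are tailored precisely so that the hypotheses of each of these results are met.

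First I would invoke Lemma \ref{lemma:Flag-24}. Its parameter requirements, namely $\chi+\delta_x+4\delta_y\le\frac{\alpha^3}{32L_2^2}$, $\delta_x\ge 2\delta_y$, and $\ell\sqrt{\frac{2\delta_x}{\gamma+2\alpha}}\ge 2\Delta_y$, are exactly among the hypotheses of the present lemma, and Line 16 being executed is the remaining hypothesis. Hence Algorithm \ref{alg:Exploit-Ncvx} produces $u,v\in\cX_k$ with $\Phi(u)\le\Phi(x_{k,0})+\chi+\delta_x+6\delta_y$ and $\Phi(u)-\Phi(v)-\nabla\Phi(v)^{\top}(u-v)+\frac{\alpha}{2}\|u-v\|^2<0$. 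Since $u,v\in\cX_k$, the triangle inequality gives $\|u-v\|\le\frac{\alpha}{2L_2}$, so Lemma \ref{lemma:NC} applies with $\eta=\frac{\alpha}{L_2}$ and yields $\min\{\Phi(x^1),\Phi(x^2)\}\le\Phi(u)-\frac{\alpha^3}{12L_2^2}$ for the probe points $x^1,x^2$ of Algorithm \ref{alg:Exploit-Ncvx}. Chaining this with the bound on $\Phi(u)$ gives $\min\{\Phi(x^1),\Phi(x^2)\}\le\Phi(x_{k,0})+\chi+\delta_x+6\delta_y-\frac{\alpha^3}{12L_2^2}$.

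Next I would pass to $p_k$. Because $p_k=\arg\min\{\phi_x:x\in\{x^1,x^2\}\}$ and each probe value satisfies $|\phi_{x^i}-\Phi(x^i)|\le\delta_y$ by Assumption \ref{error}, we get $\Phi(p_k)\le\phi_{p_k}+\delta_y=\min\{\phi_{x^1},\phi_{x^2}\}+\delta_y\le\min\{\Phi(x^1),\Phi(x^2)\}+2\delta_y$, hence $\Phi(p_k)\le\Phi(x_{k,0})+\chi+\delta_x+8\delta_y-\frac{\alpha^3}{12L_2^2}$. Finally, using $p_{k-1}=x_{k,0}$ together with the last parameter condition $\chi+\delta_x+8\delta_y\le\frac{\alpha^3}{72L_2^2}$ gives $\Phi(p_k)-\Phi(p_{k-1})\le\frac{\alpha^3}{72L_2^2}-\frac{\alpha^3}{12L_2^2}=-\frac{5\alpha^3}{72L_2^2}\le-\frac{\alpha^3}{72L_2^2}$, which is the desired inequality.

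All the substance sits in the lemmas invoked — above all Lemma \ref{lemma:Flag-24}, whose proof must show the search loop in Algorithm \ref{alg:Exploit-Ncvx} terminates with a bona fide NC pair despite only having the inexact oracle of Assumption \ref{error}. Granting those, the only care needed here is bookkeeping: matching the several $\delta_x,\delta_y,\Delta_y$-dependent conditions to those required by Lemmas \ref{lemma:Flag-24} and \ref{lemma:NC}, and tracking the $\delta_y$ slack incurred each time one moves between $\Phi$ and $\phi$. I do not anticipate any genuine obstacle beyond this accounting.
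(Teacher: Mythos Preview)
Your proposal is correct and follows essentially the same route as the paper's own argument: invoke Lemma \ref{lemma:Flag-24} to obtain the NC pair $(u,v)\in\cX_k$ with $\Phi(u)\le\Phi(x_{k,0})+\chi+\delta_x+6\delta_y$, apply Lemma \ref{lemma:NC} with $\eta=\alpha/L_2$ (using $\|u-v\|\le\alpha/(2L_2)$), pass from $\min\{\Phi(x^1),\Phi(x^2)\}$ to $\Phi(p_k)$ via the $\delta_y$-accuracy of $\phi$, and absorb the error terms using $\chi+\delta_x+8\delta_y\le\alpha^3/(72L_2^2)$. The only cosmetic difference is that you display the intermediate value $-5\alpha^3/(72L_2^2)$, which the paper suppresses.
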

\noindent We summarize all the requirements of parameter settings as follows

\begin{eqnarray}
\label{param}
    && \,\qquad\,\, \chi+\delta_x+\delta_y\overset{\rm\tiny Lm \, \ref{lemma:epln}}{\leq}\frac{\epsilon^2}{3200\gamma}, \qquad\, \chi + \delta_x + 4\delta_y\overset{\rm\tiny Lms \, \ref{lemma:Flag-5},\, \ref{lemma:Exploit-NC-Pair}}{\leq} \min\left\{\frac{\alpha^3}{32L_2^2},\frac{\epsilon^2}{50 \alpha}\right\},\nonumber\\
    &&\,\qquad\,\, \chi\!+\!\delta_x\!+\!8 \delta_y\!\overset{\rm\tiny Lm \, \ref{lemma:Exploit-NC-Pair}}{\leq} \frac{\alpha^3}{72L_2^2},\chi\!+\!4 \delta_y \!\overset{\rm\tiny Lm \, \ref{lemma:Flag-3}}{\leq} \frac{\alpha^3}{72 L_2^2},2\delta_y\!\overset{\rm\tiny Lm \, \ref{lemma:Exploit-NC-Pair}}{\leq} \delta_x,  \Delta_y\!\overset{\rm\tiny Lm \, \ref{lemma:Flag-5}}{\leq} \frac{\epsilon}{4},\\ 
    &&\,\qquad\,\,\,\,\,\, \frac{2 \Delta_y}{\ell} \overset{\rm\tiny Lm \, \ref{lemma:Exploit-NC-Pair}}{\leq} \sqrt{\frac{2 \delta_x}{\gamma+2 \alpha}}\overset{\rm\tiny Lms \, \ref{lemma:Flag-5},\,\ref{lemma:Flag-3}}{\leq} \min\left\{\frac{\epsilon}{20\left(L_1+2 \gamma\right)},\frac{\alpha}{24 L_2}\right\},\nonumber
\end{eqnarray}
where $\chi = 6\sqrt{\kappa_x}\cdot\left(11\kappa_x\delta_x+(2\ell+L_1+\alpha)\sqrt{\frac{2\delta_x}{\gamma+2\alpha}}\cdot d\right)$, $\kappa_x = \gamma/\alpha$, and $d=\alpha/L_2$.

Combining Lemmas \ref{lemma:Flag-5}, \ref{lemma:Flag-3}, \ref{lemma:Flag-24}, and \ref{lemma:Exploit-NC-Pair}, we know that if the algorithm does not terminate, then the per-epoch descent will be 
\[\Phi\left(p_k\right)-\Phi\left(p_{k-1}\right)\leq -\min \left\{\frac{\epsilon^2}{50\alpha}, \frac{\alpha^3}{72L_2^2}\right\}.\]
The per-epoch descent guarantee implies the main theorem of this section.

\begin{theorem}\label{main:theorem}
Suppose Assumptions \ref{assu:Phi} and \ref{error} hold and suppose $\epsilon\leq \frac{\ell^2}{L_2}$. By choosing $\alpha=\sqrt{L_2\epsilon}$, $\gamma=\ell$,  
 $\delta_x = \min\left\{\frac{L_2^2\epsilon^4}{10^{10}\kappa_y^2\ell^2}, \frac{\epsilon^2}{10^6\kappa_x^{3/2}\ell}\right\}$, $\delta_y = \frac{\delta_x}{2}$, and $\Delta_y = \min\left\{\frac{L_2\epsilon^2/4}{10^5\kappa_y\sqrt{\ell}}, \frac{\epsilon/3}{10^3\kappa_x^{3/4}}\right\}$ so that \eqref{param} holds, then the total iteration complexity for finding $\epsilon$-stationary point is
\( \tilde{\mathcal{O}}\left(\frac{\left(\Phi\left(p_0\right)-\Phi^*\right) \ell^{1/2} L_2^{1/4}}{\epsilon^{7 / 4}}\right).\)
\end{theorem}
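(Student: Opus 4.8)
The plan is to assemble the per-epoch descent guarantees already proved with the per-epoch iteration bound of Lemma~\ref{lemma:epln}, so that the theorem reduces to three bookkeeping steps: (i) checking that the stated choices of $\alpha,\gamma,\delta_x,\delta_y,\Delta_y$ make all of \eqref{param} hold; (ii) bounding the number of epochs $K$ by telescoping the per-epoch descent against the lower bound $\Phi^*$; and (iii) bounding the inner-loop length $T_k$ uniformly in $k$, then multiplying $K\cdot\max_k T_k$.

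\emph{Step 1: parameter verification (the main grind).} First I would note that the hypothesis $\epsilon\le\ell^2/L_2$ is exactly what forces $\alpha=\sqrt{L_2\epsilon}\le\ell=\gamma$, hence $\kappa_x=\gamma/\alpha=\ell/\sqrt{L_2\epsilon}\ge 1$, so that every $\sqrt{\kappa_x}$ and $\kappa_x^{3/2}$ appearing in $\chi$ and in Lemma~\ref{lemma:epln} is $\ge1$, and the subproblem objective $\hat\Phi_k(\cdot)+\gamma\|\cdot-\tilde x_{k,t-1}\|^2$ is $(\ell+2\alpha)$-strongly convex, so each ``$\approx\arg\min$'' in Algorithms~\ref{alg:main}--\ref{alg:Certify} is well posed. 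Then I would substitute the values of $\delta_x$, $\delta_y=\delta_x/2$, $\Delta_y$ and the resulting $\chi=6\sqrt{\kappa_x}(11\kappa_x\delta_x+(2\ell+L_1+\alpha)\sqrt{2\delta_x/(\gamma+2\alpha)}\cdot\alpha/L_2)$ into each inequality in \eqref{param}. Since $\chi=\tilde{O}(\kappa_x^{3/2}\delta_x)+\tilde{O}\!\big(\kappa_x^{1/2}(L_1+\ell)L_2^{-1}\alpha\sqrt{\delta_x/\gamma}\big)$, the two terms defining $\delta_x$ and the two defining $\Delta_y$ are precisely calibrated so that $\chi+\delta_x+O(\delta_y)$ is dominated by each of $\epsilon^2/(3200\gamma)$, $\alpha^3/(72L_2^2)$, $\epsilon^2/(50\alpha)$, and by $\ell\sqrt{2\delta_x/(\gamma+2\alpha)}$ up to the stated constants. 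This is entirely routine monomial comparison in $\epsilon,\ell,L_1,L_2$, but it is the step that consumes the most algebra.

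\emph{Step 2: counting epochs and correctness of termination.} Granting \eqref{param}, Lemmas~\ref{lemma:Flag-5}, \ref{lemma:Flag-3}, \ref{lemma:Flag-24}, \ref{lemma:Exploit-NC-Pair} (together with the trivial sufficient-descent branch $p_k=z^x_k$ of Line~14, where $\phi_{z^x_k}<\phi_{x_{k,0}}-\alpha^3/(32L_2^2)$ and $2\delta_y\le\alpha^3/(32L_2^2)-\alpha^3/(72L_2^2)$ by \eqref{param}) yield, for every epoch $k$ in which the algorithm does not stop, $\Phi(p_k)-\Phi(p_{k-1})\le-\min\{\epsilon^2/(50\alpha),\,\alpha^3/(72L_2^2)\}$. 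Substituting $\alpha=\sqrt{L_2\epsilon}$ makes both terms equal to $\Theta(\epsilon^{3/2}L_2^{-1/2})$, with minimum $\epsilon^{3/2}/(72\sqrt{L_2})$. In particular $\Phi(p_k)$ is monotonically nonincreasing, so $x_{k,0}=p_{k-1}$ obeys $\Phi(x_{k,0})\le\Phi(p_0)$ for all $k$; telescoping and using $\Phi\ge\Phi^*$ then bounds the number of epochs executed before termination by $K=O\!\big((\Phi(p_0)-\Phi^*)\sqrt{L_2}\,\epsilon^{-3/2}\big)$. When the algorithm does stop (Line~18), $\|g_{p_k}\|\le3\epsilon/4$ together with Assumption~\ref{error} and $\Delta_y\le\epsilon/4$ from \eqref{param} gives $\|\nabla\Phi(p_k)\|\le\|g_{p_k}\|+\Delta_y\le\epsilon$, so the returned point is indeed $\epsilon$-stationary.

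\emph{Step 3: inner-loop length and conclusion.} By Lemma~\ref{lemma:epln} (whose hypothesis $\chi+\delta_x+\delta_y\le\epsilon^2/(3200\gamma)$ is part of \eqref{param}) and the bound $\Phi(x_{k,0})-\Phi^*\le\Phi(p_0)-\Phi^*$ from Step~2, $T_k\le1+6\sqrt{\gamma/\alpha}\cdot\max\{0,\log(3200\gamma(\Phi(p_0)-\Phi^*+2\delta_y)/\epsilon^2)\}$, which is $\tilde{O}(\sqrt{\gamma/\alpha})=\tilde{O}\!\big(\ell^{1/2}L_2^{-1/4}\epsilon^{-1/4}\big)$ \emph{uniformly} in $k$. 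Hence the total iteration count is $\sum_{k=1}^{K}T_k\le K\cdot\max_k T_k=O\!\big((\Phi(p_0)-\Phi^*)\sqrt{L_2}\,\epsilon^{-3/2}\big)\cdot\tilde{O}\!\big(\ell^{1/2}L_2^{-1/4}\epsilon^{-1/4}\big)=\tilde{O}\!\big((\Phi(p_0)-\Phi^*)\ell^{1/2}L_2^{1/4}\epsilon^{-7/4}\big)$, as claimed. The only conceptual point beyond arithmetic is that the logarithm inside $T_k$ must be bounded uniformly over $k$; this is exactly why the monotone-descent observation of Step~2 is invoked before applying Lemma~\ref{lemma:epln}. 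I expect Step~1 to be the most labor-intensive, while Steps~2--3 are short once the per-epoch descent and $T_k$-bound are in hand.
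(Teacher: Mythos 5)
Your proposal matches the paper's proof: verify that the stated parameters satisfy \eqref{param} (reducing $\chi+O(\delta_x)\lesssim\epsilon^2/\gamma$ via $\gamma=\ell$ and $\epsilon\le\ell^2/L_2$), telescope the per-epoch descent $-\epsilon^{3/2}/(72\sqrt{L_2})$ to bound the number of epochs $K$, and multiply by the per-epoch iteration count from Lemma~\ref{lemma:epln}. You even fill in a small gap the paper leaves implicit (the Line~15 branch $p_k=z^x_k$ and the termination check $\|\nabla\Phi(p_k)\|\le\|g_{p_k}\|+\Delta_y\le\epsilon$), but the overall route is the same.
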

Before presenting the proof, one remark is that requiring $\epsilon\leq\ell^2/L_2$ is only for simplifying the expressions of $\delta_x,\delta_y$ and $\Delta_y$, removing it only affects the logarithmic factors in the final complexity bound.  
\begin{proof}
To guarantee the first 5 inequalities of \eqref{param}, it is sufficient to set $\delta_y=\delta_x/2$ and require
$\chi + 5\delta_x \leq \min\left\{\frac{\epsilon^2}{3200\gamma},\frac{\epsilon^2}{50 \alpha}, \frac{\alpha^3}{72L_2^2}\right\}$. The relationships $\gamma=\ell$ and $0<\epsilon\leq \frac{\ell^2}{L_2}$ simplify the above requirements to $\chi + 5\delta_x \leq  \frac{\epsilon^2}{3200\gamma}$. Recall the definition of $\chi$, it is sufficient to require:\vspace{-0.2cm}
\begin{equation} 
    (66\kappa_x^{1.5}+5)\delta_x \leq \frac{\epsilon^2}{6400\gamma}\qquad\mbox{and}\qquad 6\sqrt{\kappa_x}(2\ell+L_1+\alpha)\sqrt{\frac{2\delta_x}{\gamma+2\alpha}}\cdot\frac{\alpha}{L_2} \leq \frac{\epsilon^2}{6400\gamma}.\nonumber
\end{equation} 
Note that $\frac{L_2\cdot\epsilon^2}{6400\gamma(6\sqrt{\kappa_x}(2\ell+L_1+\alpha))\alpha}\leq\min\left\{\frac{\epsilon}{20\left(L_1+2 \gamma\right)},\frac{\alpha}{24 L_2}\right\}$ holds automatically due to the relationship between $L_2,\ell,\epsilon$; the last inequality of \eqref{param} is also guaranteed. Therefore, it suffices to choose $$\delta_x = \min\left\{\frac{L_2^2\epsilon^4}{10^{10}\kappa_y^2\ell^2}, \frac{\epsilon^2}{10^6\kappa_x^{3/2}\ell}\right\}, \quad \delta_y = \frac{\delta_x}{2}, \quad \Delta_y = \min\left\{\frac{L_2\epsilon^2/4}{10^5\kappa_y\sqrt{\ell}}, \frac{\epsilon/3}{10^3\kappa_x^{3/4}}\right\}.$$
Given these coefficients, \eqref{param} holds true. Combining Lemmas \ref{lemma:Flag-5}, \ref{lemma:Flag-3}, and \ref{lemma:Exploit-NC-Pair}, we know that if the algorithm does not terminate, then the per-epoch descent will be 
\[\Phi\left(p_k\right)-\Phi\left(p_{k-1}\right)\leq -\min \left\{\frac{\epsilon^2}{50\alpha}, \frac{\alpha^3}{72L_2^2}\right\}=-\min \left\{\frac{\epsilon^{3/2}}{50\sqrt{L_2}}, \frac{\epsilon^{3/2}}{72\sqrt{L_2}}\right\}=-\frac{\epsilon^{3/2}}{72\sqrt{L_2}}.\] 
Suppose the number of epochs is $K$. By telescoping the above inequality, we obtain
\[\Phi\left(p_0\right)-\Phi^*\geq \Phi\left(p_0\right)-\Phi\left(p_{K-1}\right)= \sum_{k=1}^{K-1}\left(\Phi\left(p_{k-1}\right)-\Phi\left(p_k\right)\right)\geq (K-1)\frac{\epsilon^{3/2}}{72\sqrt{L_2}}.\]
Hence we conclude
$K\leq 1+\frac{72\sqrt{L_2}}{\epsilon^{3/2}}\left(\Phi\left(p_0\right)-\Phi^*\right).$
Combined with Lemma \ref{lemma:epln}, the iterations number of each epoch is at most $1 + 6\sqrt{\frac{\ell}{\sqrt{L_2\epsilon}}}\cdot\max\!\left\{0,\log\!\left(\frac{3200\ell(\Phi(x_{k,0})-\Phi^*+2\delta_y)}{\epsilon^2}\right)\!\right\}$,  the total iteration complexity can be obtained. 
\end{proof}

\begin{remark}
Compared with CUPG \cite{carmon2017convex} that has an $\tilde{\mathcal{O}}\big((\Phi\left(p_0\right)-\Phi^*) L_1^{\frac{1}{2}} L_2^{\frac{1}{4}}\epsilon^{-7 / 4}\big)$ complexity, IAPUN replaces the $L_1^{\frac{1}{2}}$ dependence with an $\ell^{\frac{1}{2}}$, which can be much smaller in many situations, including the NC-SC minimax problems in this paper.  
\end{remark}

\subsection{Solving NC-SC minimax problems}
\label{subsec:NC-SC-deterministic}
In this section, we apply IAPUN to the NC-SC minimax problems, which requires us to provide the complexity for estimating the function and gradient of $\Phi(x)$, as well as the complexity for solving the APPA subproblem of IAPUN (Algorithm \ref{alg:main}, Line 8). \vspace{0.2cm}

\noindent\textbf{Approximate evaluation of  $\Phi$ and $\nabla \Phi$.}
In Algorithm \ref{alg:main} (Lines 14, 17), and Algorithm \ref{alg:Exploit-Ncvx} (Lines 4, 7) and Algorithm \ref{alg:Certify} (Lines 2, 10, 16), given any $x$, we need to compute estimators $\Phi_x$ and $g_x$ that satisfy Assumption \ref{error}. For any tolerance $\varepsilon>0$ and $x\in\mathbb{R}^{n}$, 
let $y_x^\varepsilon\in\mathbb{R}^{n'}$ be some point
such that $f(x,y_x^\varepsilon)\geq \max_y f(x,y) - \varepsilon$. By Danskin's Theorem, for properly chosen tolerances $\varepsilon_1,\varepsilon_2>0$, we may set
\begin{equation}
\label{estimators}
\Phi_x = f(x,y_x^{\varepsilon_1}) \quad\mbox{and}\quad
\,g_x = \nabla_x f(x,y_x^{\varepsilon_2}).
\end{equation}
Next, we characterize the $\tilde{O}\big(\sqrt{\kappa_y}\big)$ evaluation complexities in the following lemma. 
\begin{lemma}
\label{lemma:NC-SC-estimators}
Let us set $\varepsilon_1=\delta_y$ and $\varepsilon_2 = \frac{\mu\Delta_y^2}{2\ell^2}$ in \eqref{estimators}, then Assumption \ref{error} holds. If Nesterov's accelerated gradient method is adopted, then it takes $O\big(\sqrt{\kappa_y}\ln \frac{1}{\varepsilon_i}\big)$ first-order oracles to find $y_x^{\varepsilon_i}$, for each $i=1$  and $2$ respectively.
\end{lemma}
\begin{proof}
    First of all, for $\Phi_x = f(x,y_x^{\varepsilon_1})$, by definition, it is straightforward to verify that $|\Phi_x-\Phi(x)|\leq \delta_y$. For $g_x = \nabla_x f(x,y_x^{\varepsilon_2})$, note that 
    $$\frac{\mu}{2} \|y_x^{\varepsilon_2}-y^*(x)\|^2\leq \max_y f(x,y)-f(x,y_x^{\varepsilon_2})\leq \frac{\mu \Delta_y^2}{2\ell^2}.$$
    Note that Danskin's theorem indicates that $\nabla \Phi(x)=\nabla_x f(x,y^*(x)).$ Then we have 
    $$\|g_{x}-\nabla \Phi(x)\|=\|\nabla_x f(x,y_x^{\varepsilon_2})-\nabla_x f(x,y^*(x))\|\leq \ell \|y_x^{\varepsilon_2}-y^*(x)\|\le \Delta_y.$$
    Finally, the complexities of accelerated gradient method are direct results of \cite{nesterov2018lectures}.
\end{proof}

\noindent\textbf{Solving proximal point subproblems.}
In Algorithm \ref{alg:main} (Line 8) and  Algorithm \ref{alg:Certify} (Line 9, 15), we need to solve proximal point subproblems of the following form \vspace{-0.05cm}
\begin{equation}
\label{prob:PPA-sub}
\min_{x\in\mathcal{X}} \max_{y}\,\, \psi(x,y):=f(x,y) +\alpha\|x-p\|^2+ \gamma\|x-\tilde{x}\|^2,
\end{equation}
where $\cX$ is either $\RR^n$ or an L-2  ball. Note that $\psi$ is $(2\alpha+2\gamma-\ell)$-strongly convex in $x$ and  $\mu$-strongly concave in $y$,  with gradient Lipschitz  constant $2\alpha+2\gamma+\ell$. Given $\gamma \!=\! \ell \!\gg\! \alpha$, the condition number of $\psi$ in $x$ is $\frac{2\alpha+2\gamma+\ell}{2\alpha+2\gamma-\ell}\leq 3$. Hence, by \cite{lin2020near},  solving the proximal point subproblems requires $\tilde{\mathcal{O}}\big(\!\sqrt{3\kappa_y}\ln\delta_x^{-1}\big) = \tilde{\mathcal{O}}\big(\!\sqrt{\kappa_y}\big)$ first-order oracles. \vspace{0.1cm}

\begin{corollary}\label{theo:minimax}
Under Assumptions \ref{assu:minimax1}, \ref{assu:minimax2} and \ref{assu:minimax3}, using Algorithm \ref{alg:main} with proper parameters, the first-order oracle complexity for finding $\epsilon$-stationary point is
$$\tilde{\mathcal{O}}\left(\frac{\sqrt{\kappa_y}\left(\Phi\left(p_0\right)-\Phi^*\right) \ell^{1/2} L_2^{1/4}}{\epsilon^{7 / 4}}\right).\vspace{-0.05cm}$$
\end{corollary}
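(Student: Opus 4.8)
The plan is to combine the generic complexity bound of Theorem~\ref{main:theorem} with a per-iteration cost accounting that is specific to the minimax structure, so essentially no new analysis is needed beyond the two displayed discussions preceding the statement. First I would verify that Assumption~\ref{assu:Phi} holds for $\Phi(x)=\max_y f(x,y)$: Lemma~\ref{lemma:minimax} gives $L_1$-smoothness and $\ell$-weak convexity under Assumptions~\ref{assu:minimax1}--\ref{assu:minimax2}, lower-boundedness is part of Assumption~\ref{assu:minimax2}, and $L_2$-Lipschitz continuity of $\nabla^2\Phi$ is exactly Assumption~\ref{assu:minimax3}. Hence Theorem~\ref{main:theorem} applies with $\gamma=\ell$, $\alpha=\sqrt{L_2\epsilon}$, and the stated tolerances $\delta_x,\delta_y,\Delta_y$, and it yields a total iteration count, summed over all epochs of Algorithm~\ref{alg:main} (Line 4--Line 18), of order $\tilde{\mathcal{O}}\big((\Phi(p_0)-\Phi^*)\ell^{1/2}L_2^{1/4}\epsilon^{-7/4}\big)$.

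Second, I would argue that every elementary unit of work inside one such iteration costs only $\tilde{\mathcal{O}}(\sqrt{\kappa_y})$ first-order oracle calls on $f$. There are two kinds of units: (i) an approximate evaluation of $(\phi_x,g_x)$ satisfying Assumption~\ref{error} (Lines 14, 17 of Algorithm~\ref{alg:main}; Lines 4, 7 of Algorithm~\ref{alg:Exploit-Ncvx}; Lines 2, 10, 16 of Algorithm~\ref{alg:Certify}), and (ii) an approximate solve of a proximal subproblem (Line 8 of Algorithm~\ref{alg:main}; Lines 9, 15 of Algorithm~\ref{alg:Certify}). For (i), as in the first displayed paragraph before the corollary, running Nesterov's accelerated gradient on the $\mu$-strongly-concave inner problem $\max_y f(x,y)$ reaches the required inner accuracy $\min\{\delta_y,\mu\Delta_y^2/(2\ell^2)\}$ in $\tilde{\mathcal{O}}(\sqrt{\kappa_y})$ steps, after which $\phi_x=f(x,y')$ and $g_x=\nabla_x f(x,y')$ satisfy Assumption~\ref{error} by Danskin's theorem together with the $\ell$-smoothness of $f$. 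For (ii), as in the second displayed paragraph, each subproblem has the form $\min_{x\in\cX}\max_y f(x,y)+\alpha\|x-p\|^2+\gamma\|x-\tilde x\|^2$, which with $\gamma=\ell\gg\alpha$ is $O(1)$-conditioned in $x$ and $\kappa_y$-conditioned in $y$; swapping $\min$ and $\max$ and applying an accelerated method to the (now concave, $O(1)$-conditioned) outer problem while evaluating its gradient by an inner accelerated solve — exactly the nested scheme of Section~\ref{subsec:prelim} — yields an $\tilde{\mathcal{O}}(\sqrt{\kappa_y})$ bound, the projection onto the Euclidean ball $\cX_k$ being trivial.

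Third, I multiply the two bounds. Each iteration of Algorithm~\ref{alg:main} and each pass through Algorithms~\ref{alg:Certify} and~\ref{alg:Exploit-Ncvx} triggers only $O(1)$ such units, and the $T_k=\tilde{\mathcal{O}}(\sqrt{\kappa_x})$ inner passes of an epoch are already counted in the total iteration bound of Theorem~\ref{main:theorem}; hence the number of units performed over the whole run is of the same order as that total. Multiplying by the $\tilde{\mathcal{O}}(\sqrt{\kappa_y})$ per-unit cost gives a first-order oracle complexity of $\tilde{\mathcal{O}}(\sqrt{\kappa_y})\times\tilde{\mathcal{O}}\big((\Phi(p_0)-\Phi^*)\ell^{1/2}L_2^{1/4}\epsilon^{-7/4}\big)$, which is the claimed bound.

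The main obstacle is bookkeeping rather than a new idea: one must check that the accuracy targets $\delta_x,\delta_y,\Delta_y$ fixed in Theorem~\ref{main:theorem} are only inverse-polynomial in $\epsilon,\kappa_y,\ell,L_2$, so that the linear convergence of the (nested) accelerated solvers absorbs the $\log(1/\delta_x),\log(1/\Delta_y)$ factors into the logarithmic term hidden by $\tilde{\mathcal{O}}$; and that neither the re-solve of $x_{k,t}$ over the ball $\cX_k$ in Algorithm~\ref{alg:Certify} nor the $O(T_k)$ evaluations inside Algorithm~\ref{alg:Exploit-Ncvx} inflate the count beyond what Lemma~\ref{lemma:epln} already charges to an epoch.
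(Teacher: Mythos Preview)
Your proposal is correct and follows essentially the same approach as the paper's own proof: invoke Theorem~\ref{main:theorem} for the outer iteration count and multiply by the $\tilde{\mathcal{O}}(\sqrt{\kappa_y})$ per-iteration oracle cost established in the two paragraphs preceding the corollary. One small slip: after swapping $\min$ and $\max$, the outer problem in $y$ is $\kappa_y$-conditioned (not $O(1)$) while the inner problem in $x$ is $O(1)$-conditioned, but this does not affect the resulting $\tilde{\mathcal{O}}(\sqrt{\kappa_y})$ bound.
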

\begin{proof}
    First, Theorem \ref{main:theorem} indicates that at most $\tilde{\mathcal{O}}\big((\Phi\left(p_0\right)-\Phi^*) \ell^{1/2} L_2^{1/4}\epsilon^{-7 / 4}\big)$ iterations of IAPUN is needed. In each iteration of IAPUN, evaluating $\phi_x,g_x$ as well as approximately solving proximal point subproblems requires $\Tilde{O}(\sqrt{\kappa_y})$ first-order oracle evaluations. Combining both arguments leads to the reported overall complexity. 
\end{proof}

\subsection{Solving finite-sum NC-SC minimax problems}
As IAPUN works with inexact first-order oracle evaluations, it can naturally handle the finite-sum setting \vspace{-0.05cm}
\begin{equation}
\label{prob:finite-sum}
\min _{x} \max _{y} f(x, y):= \frac{1}{m} \sum_{i=1}^m f_i(x, y),\vspace{-0.05cm}
\end{equation} 
where the objective function satisfies the following smoothness assumption.

\begin{assumption}
\label{assumption:finite-sum}
For $\forall i=1,2,...,m$, the function  $f_i$ is $\ell$-smooth, $\mu$-strongly concave in $y$  and  nonconvex in $x$. 
Still denoting $\Phi(x):=\max_y f(x,y)$, we assume $\Phi$ to be lower bounded and its Hessian is $L_2$-Lipschitz continuous. 
\end{assumption}
Note that Assumption \ref{assumption:finite-sum} implies Assumptions \ref{assu:minimax1}, \ref{assu:minimax2}, and \ref{assu:minimax3}, thus Lemma \ref{lemma:minimax} still applies. Several variance reduced first-order methods \cite{luo2020stochastic,zhang2021complexity} are proposed to solve this problem,  while \cite{zhang2021complexity} achieves the state-of-the-art   $\tilde{\mathcal{O}}\left(\left(m+m^{3/4}\sqrt{\kappa_y}\right)\ell\epsilon^{-2}\right)$ complexity. With second-order smoothness, we further improve this bound as follows.\vspace{0.1cm}

\noindent\textbf{Approximate evaluation of  $\Phi$ and $\nabla \Phi$.} Analogous to Section \ref{subsec:NC-SC-deterministic}, for $\forall\varepsilon>0$ and $\forall x\in\mathbb{R}^{n}$, 
let $y_x^\varepsilon\in\mathbb{R}^{n'}$ be a random point
s.t.  $\mathbb{E}[f(x,y_x^\varepsilon)]\geq \max_y f(x,y) - \varepsilon$. We can still adopt the estimators in \eqref{estimators}, with evaluation complexities described below.
\begin{lemma}
\label{lemma:finite-sum-estimators}
Given any failure probability $\delta\in(0,1)$, set $\varepsilon_1=\delta\delta_y$ and $\varepsilon_2 = \frac{\mu\delta\Delta_y^2}{2\ell^2}$ in \eqref{estimators}, then Assumption \ref{error} holds with probability at least $1-\delta$. Let one evaluation of $f_i$ or $\nabla f_i$ for any $i$ be one first-order oracle. If Katyusha Algorithm \cite{allen2017katyusha} is adopted, then it takes $O\big(m+\sqrt{m\kappa_y}\ln \frac{1}{\varepsilon_i}\big)$ first-order oracles to find $y_x^{\varepsilon_i}$, for $i=1,2$.
\end{lemma}
\begin{proof}
    By Markov's inequality, for $i=1$ and $2$, we have $$\mathrm{Prob}\Big(f(x,y_x^{\varepsilon_i})\leq \max_y f(x,y) - \varepsilon_i/\delta\Big) \leq \frac{\mathbb{E}[\max_y f(x,y)-f(x,y_x^{\varepsilon_i})]}{\varepsilon_i/\delta}\leq \delta.$$
    Then following the discussion of Lemma \ref{lemma:NC-SC-estimators}, we conclude that Assumption \ref{error} holds with probability $1-\delta$ for $\Phi_x$ and $g_x$ respectively. That is, $$\mathrm{Prob}\big(|\Phi_x-\Phi(x)|\leq \delta_y\big)\geq 1-\delta\quad\mbox{and}\quad \mathrm{Prob}\big(\|g_x-\nabla\Phi(x)\|\leq \Delta_y\big)\geq 1-\delta.$$
    Finally, the complexity of the Katyusha algorithm directly follows \cite{allen2017katyusha}.
\end{proof}
\vspace{0.1cm}

\noindent{\textbf{Solving proximal point subproblems.} Similar to Section \ref{subsec:NC-SC-deterministic}, IAPUN iteratively solves proximal point subproblem \eqref{prob:PPA-sub} with objective function $f$ taking the finite-sum form \eqref{prob:finite-sum}. Note that with $\gamma=\ell\gg\alpha$, the condition number of this subproblem is $\kappa_x'\!=\!\frac{2\alpha+2\gamma+\ell}{2\alpha+2\gamma-\ell}\leq 3$ for $x$ and $\kappa_y'=\frac{2\alpha+2\gamma+\ell}{\mu}\leq 5\kappa_y$ for $y$. 
For any tolerance $\varepsilon>0$, let $x_+^\varepsilon$ be a random point s.t.
$\mathbb{E}\big[\!\max_y \!\psi(x_+^\varepsilon,y) \!-\! \min_{x\in\mathcal{X}} \max_y\! \psi(x,y)\big]\!\leq\! \varepsilon$. Then it takes $$\tilde{O}\Big(\sqrt{m\big(\sqrt{m}+\kappa_x'\big)\big(\sqrt{m}+\kappa_y'\big)}\ln\frac{1}{\varepsilon}\Big) = \tilde{\mathcal{O}}\big(m+m^{3/4}\sqrt{\kappa_y}\big)$$
first-order oracles for the AL-SVRE algorithm  \cite[Corollary 2]{luo2021near} to find such an $x_+^\varepsilon$. Given any failure probability $\delta\in(0,1)$, similar to the discussion of Lemma \ref{lemma:finite-sum-estimators}, setting $\varepsilon=\delta\delta_x$ and returning $x_+^{\delta\delta_x}$ guarantees the $\delta_x$ sub-optimality gap with probability at least $1-\delta$, in Algorithm \ref{alg:main} (Line 8) and  Algorithm \ref{alg:Certify} (Line 9, 15).

Note that each (inner) iteration of IAPUN calls at most 8 evaluations of $\Phi_x$, $g_x$, and proximal point subproblem \eqref{prob:PPA-sub}. Hence, let $K=\tilde{\mathcal{O}}\big(\!\left(\Phi\left(p_0\right)\!-\!\Phi^*\right) \ell^{1/2} L_2^{1/4}\epsilon^{-7 / 4}\big)$ be the total iteration number to find an $\epsilon$-stationary point (cf.\ Theorem \ref{main:theorem}), and we set $\delta = \frac{q}{8K}$ with $q\in(0,1)$ to be the failure probability of each approximate evaluation described above, then the union bound gives the following corollary. 
\begin{corollary}
Suppose Assumption \ref{assumption:finite-sum} hold. For $\forall q\in(0,1)$, by evaluating $\Phi_x$ and $g_x$ with Katyusha Algorithm \cite{allen2017katyusha} and evaluating the proximal point subproblem with AL-SVRE algorithm \cite{luo2021near}, with properly chosen parameters, it takes IAPUN 
\(
\tilde{\mathcal{O}}\big(\big(m+m^{3/4}\sqrt{\kappa_y}\big)\ell^{1/2} L_2^{1/4}\epsilon^{-7 / 4}\big)
\)
first-order oracles to find an $\epsilon$-stationary point 
with probability at least $1-q$. In particular, the dependence on $q$ is only $O(\log 1/q)$.
\end{corollary}

\section{Lower bound for NC-SC problems}\label{lower}
In this section, we derive the lower complexity bound for first-order methods to find $\epsilon$-stationary points of NC-SC problems satisfying Assumptions \ref{assu:minimax1}, \ref{assu:minimax2}, and \ref{assu:minimax3}. To simplify the notation, we denote $\Delta=\Phi(x_0)-\min_x \Phi(x)$. First, let us formally state the key definitions. 

\begin{definition}[First-order oracle]
For each query on point $(x,y)$, a first-order oracle returns the function and gradient values $(f(x,y), \nabla_x f(x,y),\nabla_y f(x,y))$.
\end{definition}

\begin{definition}[First-order algorithm] 
Let $\{(x_t,y_t)\}$ be the sequence of queries by some first-order
 algorithm. Then, the $(t + 1)$-th iterate $(x_{t+1},y_{t+1})$ satisfies
\[\operatorname{supp}(x_{t+1},y_{t+1}) \subset \bigcup_{0 \leq i \leq t}\left(\operatorname{supp}(x_i,y_i) \cup \operatorname{supp}(\nabla_x f(x,y),\nabla_y f(x,y))\right).\]
\end{definition}

Next, we introduce the concept of zero-chain \cite{nesterov2018lectures}, which is a common structure for designing hard instances and deriving lower bounds.
\begin{definition}
A function $f$ is a zero-chain if for any $z$ satisfying $\operatorname{supp}\{z\} \subseteq\{1,..., i-1\}$, then
\(\operatorname{supp}\{\nabla f(z)\} \subseteq\{1,..., i\}.\)
\end{definition}
Observe that if the sequence $\{(x_t,y_t)\}$ is generated by some first-order algorithm, with $(x_0,y_0)=(\textbf{0},\textbf{0})$, and if the function $f$ is a zero-chain, then
$\operatorname{supp}\{(x_t,y_t)\}\subseteq\{1,..., t\}.$ To derive a lower bound for NC-SC problems, let us first review the following nonconvex function constructed by \cite{carmon2021lower}:
\begin{eqnarray}\label{defn:NC-instance}
    \bar{f}^{nc}_{T, \nu}(x):&=&\frac{\sqrt{\nu}}{2}\left(x^1-1\right)^2+\frac{1}{2}\sum_{i=1}^{2T} (x^{i}-x^{i+1})^2+\nu \sum_{i=1}^{2T} \Upsilon\left(x^i\right),
\end{eqnarray}    
where the nonconvex function $\Upsilon$ is defined by 
\(\Upsilon(x):=120 \int_1^x \frac{t^2(t-1)}{1+t^2} \mathrm{d} t.\)
By \cite{carmon2021lower}, the functions $\Upsilon(x)$ and $\bar{f}^{nc}_{T, \nu}(x)$ satisfy the following properties.

\begin{lemma}\cite[Lemma 2(v)]{carmon2021lower}\label{lemma:hardinstance} The function $\Upsilon(x)$ is $\ell_1$-smooth and $\nabla^2\Upsilon(x)$ is $\ell_2$-Lipschitz continuous for some absolute constants $\ell_1,\ell_2>0$.
\end{lemma}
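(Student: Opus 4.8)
The plan is to verify directly from the closed-form definition $\Upsilon(x) = 120\int_1^x \frac{t^2(t-1)}{1+t^2}\,dt$ that both the first and second derivatives of $\Upsilon$ are globally bounded, which is exactly what $\ell_1$-smoothness (boundedness of $\Upsilon''$) and $\ell_2$-Lipschitz continuity of $\Upsilon''$ (boundedness of $\Upsilon'''$) amount to. Since this statement is quoted verbatim from \cite[Lemma 2(v)]{carmon2021lower}, the honest ``proof'' is a citation; but to make the excerpt self-contained I would include the short calculation below.

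First I would compute the derivatives explicitly. By the fundamental theorem of calculus, $\Upsilon'(x) = 120\,\frac{x^2(x-1)}{1+x^2}$. Rewriting the rational function via polynomial division, $\frac{x^2(x-1)}{1+x^2} = \frac{x^3-x^2}{x^2+1} = x - 1 - \frac{x-1}{x^2+1} = x - 1 + \frac{1-x}{1+x^2}$, so $\Upsilon'(x) = 120\bigl(x-1 + \frac{1-x}{1+x^2}\bigr)$. Differentiating again, $\Upsilon''(x) = 120\bigl(1 + \frac{d}{dx}\frac{1-x}{1+x^2}\bigr)$, and a direct quotient-rule computation gives $\frac{d}{dx}\frac{1-x}{1+x^2} = \frac{-(1+x^2) - (1-x)(2x)}{(1+x^2)^2} = \frac{x^2 - 2x - 1}{(1+x^2)^2}$. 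Hence $\Upsilon''(x) = 120\bigl(1 + \frac{x^2-2x-1}{(1+x^2)^2}\bigr)$. The correction term $\frac{x^2-2x-1}{(1+x^2)^2}$ is a rational function whose denominator grows like $x^4$ while the numerator is quadratic, so it is bounded on all of $\mathbb{R}$ (it is continuous and tends to $0$ at $\pm\infty$, hence attains a finite max and min). Therefore $|\Upsilon''(x)| \le \ell_1$ for an absolute constant $\ell_1$, giving the $\ell_1$-smoothness claim.

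For the Hessian-Lipschitz part I would differentiate once more: $\Upsilon'''(x) = 120\,\frac{d}{dx}\frac{x^2-2x-1}{(1+x^2)^2}$, and again the quotient rule produces a rational function whose numerator has degree at most $3$ while the denominator is $(1+x^2)^3$ of degree $6$; this is continuous on $\mathbb{R}$ and vanishes at infinity, hence bounded by some absolute constant $\ell_2$. By the mean value theorem this yields $|\Upsilon''(x) - \Upsilon''(x')| \le \ell_2 |x - x'|$, i.e.\ $\nabla^2\Upsilon$ (which here is just the scalar $\Upsilon''$) is $\ell_2$-Lipschitz.

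There is no real obstacle here — the only ``work'' is the routine observation that a rational function with denominator degree strictly exceeding numerator degree, and with a denominator $(1+x^2)^k$ that never vanishes, is bounded and has bounded derivatives on $\mathbb{R}$; all constants are absolute because $\Upsilon$ involves no problem parameters. The mild subtlety one should mention is simply that boundedness follows from continuity together with the decay at $\pm\infty$ (so the supremum is attained), rather than from any explicit optimization over $x$. Since the result is cited, I would keep this to the brief computation above and refer the reader to \cite{carmon2021lower} for the remaining parts of their Lemma 2.
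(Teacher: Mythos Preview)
Your proposal is correct. The paper itself supplies no proof of this lemma at all --- it is stated purely as a citation of \cite[Lemma 2(v)]{carmon2021lower} --- so your direct computation of $\Upsilon'$, $\Upsilon''$, and $\Upsilon'''$ and the boundedness argument via decay at infinity is strictly more than what the paper does, and is entirely sound.
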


\begin{lemma}\cite[Lemma 4(i)]{carmon2021lower}\label{fun_bound} 
We have
\(\bar{f}^{nc}_{T,\nu}(0)-\inf _x \bar{f}^{nc}_{T,\nu}(x) \leq \frac{\sqrt{\nu}}{2}+20 \nu T.\)
\end{lemma}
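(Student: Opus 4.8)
The plan is to read off the value of $\bar{f}^{nc}_{T,\nu}$ at the origin, to lower bound $\inf_x \bar{f}^{nc}_{T,\nu}(x)$ by $0$, and to absorb the residual term into a universal constant via a one-line estimate on $\Upsilon(0)$; this reduces the statement to $\bar{f}^{nc}_{T,\nu}(0)-\inf_x\bar{f}^{nc}_{T,\nu}(x)\le \bar{f}^{nc}_{T,\nu}(0)$ together with a bound on $\bar{f}^{nc}_{T,\nu}(0)$. To get $\inf_x\bar{f}^{nc}_{T,\nu}(x)=0$ I would observe that each of the three summands in the definition is nonnegative: the groups $\frac{\sqrt{\nu}}{2}(x^1-1)^2$ and $\frac12\sum_i(x^{2i}-x^{2i+1})^2$ are sums of squares, while $\Upsilon\ge 0$ on all of $\mathbb{R}$, because $\Upsilon'(x)=120\,x^2(x-1)/(1+x^2)$ is nonpositive for $x\le 1$ and nonnegative for $x\ge 1$, so $\Upsilon$ has global minimum $\Upsilon(1)=0$ (this monotonicity is part of \cite[Lemma 2]{carmon2021lower}). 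Hence $\bar{f}^{nc}_{T,\nu}(x)\ge 0$ for every $x$, and since the all-ones point makes every summand vanish, the infimum is exactly $0$.

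Next I would evaluate $\bar{f}^{nc}_{T,\nu}(0)$: at $x=0$ the coupling terms vanish, the leading term equals $\frac{\sqrt{\nu}}{2}(0-1)^2=\frac{\sqrt{\nu}}{2}$, and the remaining term equals $\nu\sum_{i=1}^{2T}\Upsilon(0)=2T\nu\,\Upsilon(0)$. It then suffices to bound $\Upsilon(0)$ by an absolute constant: since $\Upsilon(0)=120\int_1^0\frac{t^2(t-1)}{1+t^2}\,dt=120\int_0^1\frac{t^2(1-t)}{1+t^2}\,dt$ and $1+t^2\ge 1$ on $[0,1]$, we obtain $\Upsilon(0)\le 120\int_0^1 t^2(1-t)\,dt=120\cdot\tfrac{1}{12}=10$. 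Combining this with $\inf_x\bar{f}^{nc}_{T,\nu}(x)=0$ gives $\bar{f}^{nc}_{T,\nu}(0)-\inf_x\bar{f}^{nc}_{T,\nu}(x)\le \frac{\sqrt{\nu}}{2}+20\nu T$, which is the claimed estimate (the $\mu$ appearing in the displayed inequality playing the role of $\nu$ for this $y$-free instance).

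I do not expect any genuine obstacle: the only two points that require a line of care are the sign analysis of $\Upsilon'$ establishing $\Upsilon\ge 0$ — which may simply be imported from \cite[Lemma 2]{carmon2021lower} — and the elementary integral bound on $\Upsilon(0)$, which could equally well be replaced by the fact, also recorded in \cite{carmon2021lower}, that $\Upsilon(0)$ is a fixed universal constant; everything else is direct substitution.
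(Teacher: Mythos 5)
Your proof is correct and complete: the paper gives no argument of its own for this lemma (it is imported verbatim from \cite[Lemma 4(i)]{carmon2021lower}), and your verification --- nonnegativity of every summand (using $\Upsilon'\le 0$ on $(-\infty,1]$, $\Upsilon'\ge 0$ on $[1,\infty)$, $\Upsilon(1)=0$) to get $\inf_x\bar{f}^{nc}_{T,\nu}(x)=0$, plus the evaluation $\bar{f}^{nc}_{T,\nu}(0)=\frac{\sqrt{\nu}}{2}+2T\nu\,\Upsilon(0)$ with $\Upsilon(0)\le 120\int_0^1 t^2(1-t)\,dt=10$ --- is exactly the standard argument behind the cited result. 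Your reading of the displayed $\mu$ as a typo for $\nu$ is also right: the paper's later choice of $T$ (with $20\,\nu\ell\lambda^2/(2\bar{\ell}_1)$ in the denominator) only makes sense with the bound $\frac{\sqrt{\nu}}{2}+20\nu T$ that you derived.
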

\begin{lemma}\cite[Lemma 3]{carmon2021lower}\label{grad_lower} 
Let $\nu\le 1$. For any $x\in \mathbb{R}^{2T+1}$ with $x^{2T}=x^{2T+1}=0$, we have 
\(\|\nabla\bar{f}^{nc}_{T,\nu}(x) \|>\nu^{3/4}/4.\)
\end{lemma}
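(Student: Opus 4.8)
The plan is to argue by contradiction. I would assume there is a point $x\in\RR^{2T+1}$ with $x^{2T}=x^{2T+1}=0$ for which $\|\nabla\bar{f}^{nc}_{T,\nu}(x)\|\le\nu^{3/4}/4$, so that, since $\|\cdot\|\ge\|\cdot\|_\infty$, every coordinate $\partial_j\bar{f}^{nc}_{T,\nu}(x)$ has absolute value at most $\nu^{3/4}/4$, and then derive a contradiction. The first thing to isolate is the mechanism behind $\bar{f}^{nc}_{T,\nu}$: it combines a $\sqrt\nu$-scaled anchor $\tfrac{\sqrt\nu}{2}(x^1-1)^2$ that pins $x^1$ near $1$, $O(1)$-scaled quadratic couplings that tie each coordinate to its chain neighbour, and $\nu$-scaled nonconvex potentials $\nu\,\Upsilon(\cdot)$. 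The analytic facts I would rely on are the shape of $\Upsilon'(t)=120\,t^2(t-1)/(1+t^2)$: it has a double zero at $0$, a simple zero at $1$, is bounded away from $0$ on a fixed middle band such as $[1/4,3/4]$, and satisfies $\Upsilon'(t)=\Theta(t)$ for large $|t|$. The first two properties make $\bar{f}^{nc}_{T,\nu}$ a zero-chain whose only ``aligned'' near-critical configuration has every active coordinate close to $1$; the last acts as a restoring force forbidding any coordinate from running off to infinity cheaply.

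The core of the argument I would organize as reading the coordinate equations $\partial_j\bar{f}^{nc}_{T,\nu}(x)=O(\nu^{3/4})$ from the tail of the chain inward. Starting from $x^{2T}=x^{2T+1}=0$, the bound on $\partial_{2T}\bar{f}^{nc}_{T,\nu}(x)$ forces $x^{2T-1}$ to be small; substituting into $\partial_{2T-1}\bar{f}^{nc}_{T,\nu}(x)=O(\nu^{3/4})$ pins $x^{2T-2}$ up to the nonlinear correction $\nu\,\Upsilon'(x^{2T-1})$, and so on down the chain. I would then split into two cases. If this near-recursion keeps every coordinate in a small ball around the aligned value $1$, then in particular $x^{2T}$ and $x^{2T+1}$ would have to lie near $1$, contradicting $x^{2T}=x^{2T+1}=0$; the barrier property closes this case, since a coordinate sitting in the band $[1/4,3/4]$ between $0$ and $1$ already forces a coupling term, hence a gradient coordinate, of order $1\gg\nu^{3/4}/4$. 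If instead the recursion escapes that ball, the linear growth of $\Upsilon'$ makes the coordinate magnitudes grow geometrically as one moves toward the head of the chain, and once this is combined with the rigid $\sqrt\nu$-anchor on $x^1$ no choice of neighbours can keep $\partial_j\bar{f}^{nc}_{T,\nu}(x)$ small for small $j$: one such partial derivative is then at least $\sqrt\nu\ge\nu^{3/4}$ (using $\nu\le1$), again a contradiction. Tracking the constants through the dichotomy should yield the explicit threshold $\nu^{3/4}/4$.

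The hard part will be the middle step: making ``read the equations inward'' rigorous, which requires (i) uniformly controlling the nonlinear corrections $\nu\,\Upsilon'(x^j)$ so that the error tolerance does not degrade uncontrollably along a chain of length $2T$, and (ii) genuinely excluding a finely tuned $x$ whose coordinate cross-terms cancel just enough to keep every gradient coordinate below $\nu^{3/4}/4$ while still satisfying $x^{2T}=x^{2T+1}=0$. This is precisely the robust-zero-chain analysis carried out in \cite[Lemmas 2--3]{carmon2021lower}, whose sharp estimates on $\Upsilon'$ near $0$, near $1$, and at infinity are what fix both the case split and the constant $1/4$; accordingly I would quote their statement and, for completeness, adapt their proof to the present indexing.
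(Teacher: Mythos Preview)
The paper does not give its own proof of this lemma; it simply cites \cite[Lemma~3]{carmon2021lower} and uses the bound as a black box. Your proposal ends at the same place—quoting that result and adapting its proof to the present indexing—so the two are aligned; the heuristic sketch you offer beforehand is extra (and a bit muddled in the case split, where ``near $0$'' from the tail recursion and ``near $1$'' from the anchor get conflated), but since you explicitly defer the rigorous argument to the cited reference this does not affect the outcome.
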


\noindent Now, let us consider the following hard instance from the NC-SC problem class:
\begin{eqnarray}
\label{defn:NC-SC-instance-0}
\bar{f}_{T,\nu}(x;\bar{y}) &=& \frac{\sqrt{\nu}}{2}\left(x^1-1\right)^2+\frac{1}{2} \sum_{i=1}^T\left(x^{2i-1}-x^{2i}\right)^2 \\
& & +\nu \sum_{i=1}^{2T} \Upsilon\left(x^i\right)+\sum_{i=1}^{T} h\left(x^{2i}, x^{2i+1} ; \bar{y}^{(i)}\right)+c\sum_{i=2}^{2T+1} (x^{i})^2\nonumber
\end{eqnarray}
where $x\in\mathbb{R}^{2T+1}$ and $\bar{y}^{(i)}\in\mathbb{R}^{n}$ for $i=1,\cdots,T$, namely, $\bar{y}\in\mathbb{R}^{Tn}$. The parameter $c$ will be specified later. The function $h:\mathbb{R}\times\mathbb{R}\times\mathbb{R}^{n}\to\mathbb{R}$ is defined as 
\begin{eqnarray}
\label{defn:NC-SC-instance-1}
    h(x,z;y)&:=&-\frac{1}{2} \sum_{i=1}^{n-1}\left(y^i-y^{i+1}\right)^2-\frac{1}{2 n^2}\|y\|_2^2+\sqrt{\frac{C}{n}}\left(x y^1- z y^n\right)\nonumber\\
    &=&-\frac{1}{2} y^{\top}\left(\frac{1}{n^2} I_n+A\right) y+\sqrt{\frac{C}{n}} b_{x, z}^{\top} y
\end{eqnarray}
where $b_{x, z}:=x e_1- z e_n$ and 
\[A:=\begin{pmatrix}
1 & -1 & & & \\
-1 & 2 & -1 & & \\
& -1 & \ddots & \ddots & \\
& & \ddots & 2 & -1 \\
& & & -1 & 1
\end{pmatrix}\]  
is a positive semidefinite matrix and $\|A\|_2\leq 4$. Consequently, $h(x,z;y)$ is $1/n^2$-strongly concave in $y$.
By straight computation, it is not hard to observe that
\[h^m(x, z):=\max _{y \in \mathbb{R}^n} h(x, z ; y)=\frac{C}{2 n} b_{x, z}^{\top}\left(\frac{1}{n^2} I_n+A\right)^{-1} b_{x, z}.\]

\begin{lemma}\cite[Lemma 5]{li2021complexity}
Suppose $n\ge 10$, then there exist $n$-independent constants $f_1>d_1>0$, $f_2>d_2>0$, and $n$-dependent constants $a_1\in [d_1,f_1]$, $a_2\in[d_2,f_2]$, such that 
$h^m(x, z)=C\left(\frac{a_1}{2} x^2-a_2 x z+\frac{a_1}{2} z^2\right)$. Here, $n$-dependent constants refer to the constants that depend on $n$, and $n$-independent constants refer to the constants that don't depend on $n$.
\end{lemma}
\noindent Now let us set $C=1/a_2$ in \eqref{defn:NC-SC-instance-1} and $c=C(a_2-a_1)/2$ in \eqref{defn:NC-SC-instance-0}, and substitute the partial maximization of the $\bar{y}$ variables for the $h(\cdot)$, we have 
\begin{eqnarray}
    \bar{f}_{T,\nu}^m(x):&=& \max_{\bar{y} \in \mathbb{R}^n}  \bar{f}_{T,\nu}(x;\bar{y})=\frac{\sqrt{\nu}}{2}\left(x^1-1\right)^2+\frac{1}{2}\sum_{i=1}^{2T} (x^{i}-x^{i+1})^2+\nu \sum_{i=1}^{2T} \Upsilon\left(x^i\right).\nonumber
\end{eqnarray}    
That is, $\bar{f}_{T,\nu}^m(x)=\bar{f}^{nc}_{T, \nu}(x)$ defined by \eqref{defn:NC-instance}.
According to Lemma \ref{lemma:hardinstance} and the fact that $\|\frac{1}{n^2} I_n+A\|\le 5$, if we choose $\nu\leq 1$, the function $\bar{f}_{T,\nu}(x;\bar{y})$ is $\bar{\ell}_1$-smooth for some absolute constant $\bar{\ell}_1$ and  $\nabla^2\bar{f}_{T,\nu}^m(x)$ is $\nu \bar{\ell}_2$-Lipschitz continuous for some absolute constant $\bar{\ell}_2$. Note that by absolute constant, we mean a constant real number that does not depend on any other parameters. Then we apply the following scaling to manipulate the smoothness constants of a problem:
\begin{equation}
    \label{defn:NC-SC-instance-2}
    f_{T,\nu}(x;\bar{y})=\frac{\ell\lambda^2}{\bar{\ell}_1}\bar{f}_{T,\nu}\left(\frac{x}{\lambda};\frac{\bar{y}}{\lambda}\right).
\end{equation} 
Then by setting $\nu=\frac{L_2\bar{\ell}_1\lambda}{\bar{\ell}_2 \ell}\leq 1$, the function $f_{T,\nu}(x;\bar{y})$ defined by \eqref{defn:NC-SC-instance-2} is $\ell$-smooth and $\nabla^2 f_{T,\nu}^m(x)$ is $L_2$-Lipschitz continuous, we will show $\nu\leq 1$ later. By setting $n=\Theta\left(\sqrt{\kappa_y/\bar{\ell}_1}\right)$, we know $f_{T,\nu}(x;\cdot)$ is $\mu$-strongly concave. Finally, we set 
$$\lambda= \left(\frac{4\epsilon}{\left(\ell/\bar{\ell}_1\right)^{1/4}\left(L_2/\bar{\ell}_2\right)^{3/4}}\right)^{4/7}\qquad\quad\mbox{and}\qquad\quad T=\left\lfloor\frac{\Delta-\frac{\sqrt{\nu}\ell\lambda^2}{2\bar{\ell}_1}}{20\frac{\nu\ell\lambda^2}{2\bar{\ell}_1}}\right\rfloor.$$
By the definition of $T$ and Lemma \ref{fun_bound}, we know $f_{T,\nu}(0)-\inf_x f_{T,\nu}(x)\le \Delta$.
We focus on small enough $\epsilon$ such that $\epsilon\le \frac{1}{4}\left(\frac{\ell}{\bar{\ell}_1}\right)^{2}\left(\frac{L_2}{\bar{\ell}_2}\right)^{-1}$. We can observe that 
\[\nu=\frac{L_2\bar{\ell}_1\lambda}{\bar{\ell}_2 \ell}=\left(\frac{4\epsilon}{\left(\ell/\bar{\ell}_1\right)^{2}\bar{\ell}_2/L_2}\right)^{4/7}\le 1.\]

Combined with Lemma \ref{fun_bound}, we know $f_{T,\nu}(x;\bar{y})$ satisfies Assumptions \ref{assu:minimax1}, \ref{assu:minimax2}, and \ref{assu:minimax3}. Next, we will show at least an $n(T-1)$ complexity is needed to obtain an $\epsilon$-stationary point with the idea of zero-chain. Given $\Upsilon'(0)=0$, we observe the nonzero chain expands in the following way.

\begin{observation}\label{obser:zero}
For $\forall n, T\in \mathbb{N}_+$, $\nu>0$, the zero chain of $\bar{f}_{T,\nu}(x;\bar{y})$ expands as: $x^1,x^2$, $y^{(1)1},y^{(1)2},...,y^{(1)n}$,   $x^3,x^4,y^{(2)1},y^{(2)2},...,y^{(2)n}$, $...,x^{2T+1}$, where we denote $y^{(i)}=(y^{(i)1},y^{(i)2},\cdots,y^{(i)n})^T$. We visualize this chain as follows: \vspace{-0.2cm}
\begin{figure}[H]
    \centering
    \includegraphics[height=1.1cm]{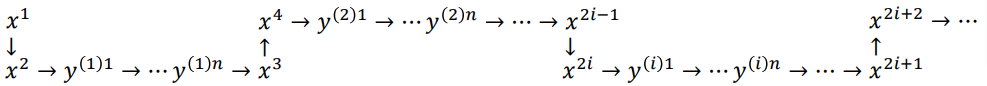}
    \vspace{-0.3cm}
\end{figure}
\noindent In the above chain, the vertical expansion is introduced by the $(x^{2i-1}-x^i)^2$ terms while the horizontal expansions are introduced by the $h(x^{2i},x^{2i+1};y^{(i)})$ terms.
\end{observation}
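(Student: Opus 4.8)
The goal is to verify Observation~\ref{obser:zero}, namely that $\bar f_{T,\nu}(x;\bar y)$ is a zero-chain whose support expands one coordinate at a time in the stated order $x^1,x^2,y^{(1)1},\dots,y^{(1)n},x^3,x^4,y^{(2)1},\dots$. The plan is to decompose the gradient of $\bar f_{T,\nu}$ into its constituent pieces and track, term by term, which coordinate of the gradient each summand can activate. First I would recall the three sources of coupling in \eqref{defn:NC-SC-instance-0}: (i) the separable terms $\frac{\sqrt\nu}{2}(x^1-1)^2$, $\nu\sum_i\Upsilon(x^i)$, and $c\sum_i (x^i)^2$, whose gradients in coordinate $x^i$ depend only on $x^i$ (and, crucially, $\Upsilon'(0)=0$, so a zero coordinate stays off); (ii) the quadratic couplings $\frac12\sum_{i=1}^{T}(x^{2i-1}-x^{2i})^2$, whose gradient links only the adjacent pair $(x^{2i-1},x^{2i})$; and (iii) the saddle terms $h(x^{2i},x^{2i+1};\bar y^{(i)})$. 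For (iii) I would use the explicit form $h(x,z;y)=-\frac12 y^\top(\tfrac1{n^2}I_n+A)y+\sqrt{C/n}(xy^1-zy^n)$, so that $\nabla_y h$ is tridiagonal in $y$ plus a rank-one forcing supported on $\{y^1,y^n\}$ driven by $(x^{2i},x^{2i+1})$, while $\nabla_{x^{2i}}h$ depends only on $y^{(i)1}$ and $\nabla_{x^{2i+1}}h$ only on $y^{(i)n}$.

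Next I would run the induction that implements the picture. Starting from $(x,\bar y)=(\mathbf 0,\mathbf 0)$: the only nonzero partial of the gradient is $\partial_{x^1}$, coming from $\tfrac{\sqrt\nu}{2}(x^1-1)^2$; all other terms vanish at the origin because $\Upsilon'(0)=0$, the quadratic couplings are $0$ at $0$, and $b_{0,0}=0$ kills the $h$-forcing. So $x^1$ activates first. Once $\operatorname{supp}(x)\subseteq\{1\}$, the term $\tfrac12(x^1-x^2)^2$ produces a nonzero $\partial_{x^2}$, activating $x^2$. With $x^2$ nonzero, the forcing $\sqrt{C/n}\,x^2 y^{(1)1}$ in $h(x^2,x^3;\bar y^{(1)})$ makes $\partial_{y^{(1)1}}h\ne 0$, activating $y^{(1)1}$; then the tridiagonal $A$-structure propagates $y^{(1)1}\to y^{(1)2}\to\cdots\to y^{(1)n}$ one index per step, since $\nabla_y$ of $-\tfrac12 y^\top(\tfrac1{n^2}I_n+A)y$ couples only nearest neighbours. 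Once $y^{(1)n}$ is active, $\partial_{x^3}h=-\sqrt{C/n}\,y^{(1)n}\ne 0$ activates $x^3$; then $\tfrac12(x^3-x^4)^2$ activates $x^4$; then $h(x^4,x^5;\bar y^{(2)})$ repeats the pattern. Inductively the support is exactly the claimed prefix after each step, and because no term ever couples a coordinate to one strictly later in the ordering than its immediate successor, $\bar f_{T,\nu}$ is a zero-chain with respect to this ordering.

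The two points requiring slight care, which I would flag explicitly, are: that $y$ must be introduced via the \emph{inner maximization} oracle — i.e., the first-order algorithm of Definition~(first-order algorithm) may move in $y^{(i)1}$ only after $x^{2i}$ is nonzero, and the argument uses that $\nabla_y h$ genuinely has a nonzero $y^1$-component at that stage (not accidentally cancelled), which holds because the forcing term and the quadratic term have disjoint ``reach'' at the boundary of the active set; and that the extra $c\sum_{i\ge 2}(x^i)^2$ and $\nu\sum_i\Upsilon(x^i)$ terms do \emph{not} create new couplings — they are diagonal — and in particular $\Upsilon'(0)=0$ guarantees they never spontaneously turn a dormant coordinate on. The main obstacle is not conceptual but bookkeeping: one must be careful that a coordinate of the gradient is genuinely nonzero (so that a first-order algorithm is \emph{forced} to respect the chain, giving the lower bound) rather than merely ``allowed'' to be nonzero; this is where the concrete constants $a_1\neq a_2$, $C=1/a_2$, $c=C(a_2-a_1)/2$, and $\nu>0$, $\sqrt\nu>0$ get used, ensuring no miraculous cancellation collapses the chain. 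Everything else is a routine induction on the coordinate index, split into the ``$x$-pair'' step, the ``enter $y^{(i)}$'' step, the ``propagate along $y^{(i)}$'' steps, and the ``exit to $x^{2i+1}$'' step.
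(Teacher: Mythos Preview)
Your argument is correct and is precisely the computation the paper leaves implicit: the paper merely notes ``Given $\Upsilon'(0)=0$, we observe the nonzero chain expands in the following way'' and states the ordering, while you carry out the term-by-term inspection of $\nabla\bar f_{T,\nu}$ that justifies it. The decomposition into separable terms, the nearest-neighbour $x$-couplings, and the tridiagonal $y$-block with boundary forcing is exactly right.

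One small over-complication: for the zero-chain property and hence for the lower bound, you only need the \emph{inclusion} $\operatorname{supp}(\nabla f(z))\subseteq\{1,\dots,i\}$ whenever $\operatorname{supp}(z)\subseteq\{1,\dots,i-1\}$; you do not need the $i$-th gradient coordinate to be genuinely nonzero. The lower bound follows because after $t$ oracle calls the algorithm's iterate \emph{cannot} have support beyond the $t$-th coordinate in the ordering, so for $t\le n(T-1)$ the last two $x$-coordinates are still zero and Lemma~\ref{grad_lower} applies. So your worry about ``miraculous cancellation collapsing the chain'' and the role of the specific constants $a_1,a_2,C,c$ is unnecessary here: those constants are chosen to make $\max_{\bar y}\bar f_{T,\nu}$ equal the Carmon--Duchi instance, not to keep the chain alive.
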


\noindent By Observation \ref{obser:zero}, for any $t\le n(T-1)$, the iteration $x_t$ must satisfy $x_t^{2T}=x_t^{2T+1}=0$. Then Lemma \ref{grad_lower} immediately indicates that \vspace{-0.1cm}
\[\|\nabla f_{T,\nu}^m(x_t)\|\ge \frac{\ell\lambda}{\bar{\ell}_1}\nu^{3/4}/4=\frac{1}{4}\left(\frac{\ell}{\bar{\ell}_1}\right)^{1/4}\left(\frac{L_2}{\bar{\ell}_2}\right)^{3/4}\lambda^{7/4}=\epsilon.\]
To satisfy $\|\nabla f_{T,\nu}^m(x)\|\le \epsilon$, at least $n(T-1)$ gradient evaluation is needed. Thus, we only need to give a lower bound on $T$, which can be divided into two cases.

\noindent\textbf{Case 1.} If $\Delta< \frac{\sqrt{\nu}\ell\lambda^2}{\bar{\ell}_1}$, then we have \vspace{-0.1cm}
\[\Delta< \frac{\sqrt{\nu}\ell\lambda^2}{\bar{\ell}_1}= 4^{10/7} \epsilon^{10/7}\left(\frac{\ell}{\bar{\ell}_1}\right)^{\!1/7}\!\left(\frac{L_2}{\bar{\ell}_2}\right)^{\!-4/7}\Longrightarrow\,\,\,\,\, \epsilon > \frac{1}{4}\Delta^{7/10}\left(\frac{\ell}{\bar{\ell}_1}\right)^{-1/10}\left(\frac{L_2}{\bar{\ell}_2}\right)^{2/5}\]
However, because we only focus on small enough $\epsilon$, we will consider the other case.

\noindent\textbf{Case 2.} If $\Delta\ge\frac{\sqrt{\nu}\ell\lambda^2}{\bar{\ell}_1}$, then we have \vspace{-0.1cm}
\[2T+1\ge \frac{\Delta}{40\frac{\nu\ell\lambda^2}{2\bar{\ell}_1}}=\frac{\Delta}{20\frac{L_2\lambda^3}{\bar{\ell}_2}}=\frac{\Delta \ell^{3/7}L_2^{2/7}}{20\cdot 4^{12/7}\cdot\bar{\ell}_1^{3/7}\bar{\ell}_2^{2/7}\epsilon^{12/7}}.\]
Therefore, to find some $x$ s.t. $\|\nabla f_{T,\nu}^m(x)\|\le \epsilon$ for sufficiently small $\epsilon$, one needs at least
\(t\ge \Omega\left(\Delta\sqrt{\kappa_y}\ell^{\frac{3}{7}}L_2^{\frac{2}{7}} \epsilon^{-12 / 7}\right)\) gradient evaluations, as summarized in Theorem \ref{theorem:lower-bound}.

\begin{theorem}
\label{theorem:lower-bound}
For any $\Delta,\ell,L_2,\mu,\epsilon>0$, such that $\kappa_y=\ell/\mu_y\ge 1$, and $$\epsilon \le \min\left\{\frac{1}{4}\left(\frac{\ell}{\bar{\ell}_1}\right)^{2}\left(\frac{L_2}{\bar{\ell}_2}\right)^{-1},\frac{1}{4}\Delta^{7/10}\left(\frac{\ell}{\bar{\ell}_1}\right)^{-1/10}\left(\frac{L_2}{\bar{\ell}_2}\right)^{2/5}\right\},$$
there exists an NC-SC saddle point problem of form \eqref{prob:minimax} that satisfies the following properties: (i). The objective function $f$ is $\ell$-smooth and $\mu$-strongly concave in $y$; (ii) Let $\Phi(x):=\max_y f(x,y)$, then $\Phi(x)$ has $L_2$-Lipschitz Hessian and $\Phi(0)-\min_{x}\Phi(x)\leq \Delta$. To find some $\bar x$ s.t. $\|\nabla \Phi(\bar x)\|\le \epsilon$ for this NC-SC problem, any first-order algorithm will need at least $t\ge \Omega\left(\sqrt{\kappa_y}\Delta\ell^{3/7}L_2^{2/7} \epsilon^{-12 / 7}\right)$ gradient evaluations. 
\end{theorem}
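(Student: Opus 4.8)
The plan is to prove the lower bound by exhibiting a single hard NC-SC instance on which every first-order algorithm of the class defined above, initialized at the origin, is forced to make many oracle queries. The backbone is the nonconvex ``zero-chain'' $\bar{f}^{nc}_{T,\nu}$ of \cite{carmon2021lower} in \eqref{defn:NC-instance}; the new ingredient is to replace the pairwise coupling terms $(x^{2i}-x^{2i+1})^2$ by a $1/n^2$-strongly-concave maximization block $h(x^{2i},x^{2i+1};\bar y^{(i)})$ so that (a) partial maximization over $\bar y$ reproduces exactly $\bar{f}^{nc}_{T,\nu}$, and (b) propagating information from $x^{2i}$ to $x^{2i+1}$ now forces the algorithm to traverse the entire $n$-dimensional $y$-block, multiplying the query count by $n=\Theta(\sqrt{\kappa_y})$.

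First I would assemble $\bar{f}_{T,\nu}(x;\bar y)$ as in \eqref{defn:NC-SC-instance-0}, use \cite[Lemma 5]{li2021complexity} to get $h^m(x,z)=C(\tfrac{a_1}{2}x^2-a_2 xz+\tfrac{a_1}{2}z^2)$, and then calibrate $C=1/a_2$ and $c=C(a_2-a_1)/2$ so that the diagonal terms from $\sum_i h^m$ and from $c\sum_i(x^i)^2$ recombine with $\tfrac12\sum_i(x^{2i-1}-x^{2i})^2$ into $\tfrac12\sum_{i=1}^{2T}(x^{2i}-x^{2i+1})^2$, yielding $\bar{f}_{T,\nu}^m=\bar{f}^{nc}_{T,\nu}$. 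Next I would carry out the smoothness bookkeeping: $h(x,z;\cdot)$ is $1/n^2$-strongly concave, and by Lemma \ref{lemma:hardinstance} together with $\|n^{-2}I_n+A\|\le 5$, for $\nu\le1$ the function $\bar{f}_{T,\nu}$ is $\bar{\ell}_1$-smooth and $\nabla^2\bar{f}_{T,\nu}^m$ is $\nu\bar{\ell}_2$-Lipschitz. Applying the rescaling $f_{T,\nu}(x;\bar y)=\tfrac{\ell\lambda^2}{\bar{\ell}_1}\bar{f}_{T,\nu}(x/\lambda;\bar y/\lambda)$ with $\nu=\tfrac{L_2\bar{\ell}_1\lambda}{\bar{\ell}_2\ell}$ and $n=\Theta(\sqrt{\kappa_y/\bar{\ell}_1})$ makes $f_{T,\nu}$ exactly $\ell$-smooth, $\mu$-strongly concave in $y$, with $\nabla^2 f_{T,\nu}^m$ being $L_2$-Lipschitz; then choosing $\lambda$ and $T$ as in the statement and invoking Lemma \ref{fun_bound} bounds the initial gap by $\Delta$, while the hypothesis $\epsilon\le\tfrac14(\ell/\bar{\ell}_1)^2(L_2/\bar{\ell}_2)^{-1}$ certifies $\nu\le1$, so everything above is legitimate.

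The heart of the argument is the query lower bound itself. I would state the zero-chain propagation (Observation \ref{obser:zero}): since $\Upsilon'(0)=0$, the support of the iterates can only grow in the order $x^1,x^2\to y^{(1)}\to x^3,x^4\to y^{(2)}\to\cdots\to x^{2T+1}$, and in particular each block $y^{(i)}$, being itself a length-$n$ zero-chain driven by the tridiagonal $A$, must be traversed coordinate-by-coordinate before $x^{2i+1}$ can become nonzero. Hence for every $t\le n(T-1)$ the query $x_t$ satisfies $x_t^{2T}=x_t^{2T+1}=0$, so Lemma \ref{grad_lower} gives $\|\nabla f_{T,\nu}^m(x_t)\|\ge\tfrac{\ell\lambda}{\bar{\ell}_1}\nu^{3/4}/4$, which equals $\epsilon$ by the choice of $\lambda$; therefore at least $n(T-1)$ gradient evaluations are required to reach an $\epsilon$-stationary point.

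Finally I would convert this into the advertised rate by lower-bounding $T$, splitting on $\Delta$ versus $\tfrac{\sqrt{\nu}\ell\lambda^2}{\bar{\ell}_1}$: in the regime $\Delta<\tfrac{\sqrt{\nu}\ell\lambda^2}{\bar{\ell}_1}$ a direct computation forces $\epsilon>\tfrac14\Delta^{7/10}(\ell/\bar{\ell}_1)^{-1/10}(L_2/\bar{\ell}_2)^{2/5}$, contradicting the second smallness hypothesis, so only the regime $\Delta\ge\tfrac{\sqrt{\nu}\ell\lambda^2}{\bar{\ell}_1}$ occurs, where $2T+1\ge\tfrac{\Delta}{20L_2\lambda^3/\bar{\ell}_2}=\Omega(\Delta\ell^{3/7}L_2^{2/7}\epsilon^{-12/7})$; multiplying by $n=\Theta(\sqrt{\kappa_y})$ yields the claimed $\Omega(\Delta\sqrt{\kappa_y}\ell^{3/7}L_2^{2/7}\epsilon^{-12/7})$. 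The step I expect to be the main obstacle is making the zero-chain propagation across the $y$-blocks fully rigorous — precisely showing that no ``shortcut'' through $h(x^{2i},x^{2i+1};\cdot)$ links $x^{2i}$ to $x^{2i+1}$ in fewer than $n$ queries — together with keeping the smoothness and strong-concavity constants consistent through both the $(C,c)$ calibration and the $\lambda$-rescaling.
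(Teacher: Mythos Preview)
Your proposal is correct and follows essentially the same approach as the paper: construct $\bar f_{T,\nu}(x;\bar y)$ via \eqref{defn:NC-SC-instance-0}--\eqref{defn:NC-SC-instance-1}, calibrate $C$ and $c$ so that $\bar f_{T,\nu}^m=\bar f^{nc}_{T,\nu}$, rescale by $\lambda$ with the same choices of $\nu,n,\lambda,T$, invoke Observation~\ref{obser:zero} and Lemma~\ref{grad_lower} to force $n(T-1)$ queries, and split on $\Delta$ versus $\tfrac{\sqrt{\nu}\ell\lambda^2}{\bar\ell_1}$ to extract the $\Omega(\Delta\sqrt{\kappa_y}\ell^{3/7}L_2^{2/7}\epsilon^{-12/7})$ rate. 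The paper treats the zero-chain propagation across the $y$-blocks at exactly the level of Observation~\ref{obser:zero}, so the concern you flag as the ``main obstacle'' is not developed further there either.
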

Due to the construction of hard instance, the lower bound provided by Theorem \ref{theorem:lower-bound} is only effective for small enough accuracy that satisfies $\epsilon \leq O(\ell^2/L_2)$. In this accuracy regime, both of our lower (Theorem \ref{theorem:lower-bound}) and upper (Theorem \ref{theo:minimax}) bounds improve the existing lower and upper bounds of first-order methods without exploiting second-order smoothness \cite{zhang2021complexity,li2021complexity,lin2020near}. As there is only a negligible $\tilde{O}\big(\!(\frac{\ell^2}{L_2\epsilon})^{\frac{1}{28}}\big)$ gap between our lower and upper bounds, both of them are close to optimal.

\section{Further applications of IAPUN}\label{application}
In this section, we will introduce the further application of IAPUN to the linearly constrained minimax problems and the bilevel optimization problems.

\subsection{Minimax problems with coupled linear equality constraints}
In \cite{tsaknakis2021minimax,zhang2023primal,dai2022optimality}, a class of constrained minimax problems has been introduced where the primal and dual decision variables may subject to additional linear constraints. 
In this subsection, we focus on the linear equality constrained with coupling constraints of the following form:
\[\min _{x\in\mathbb{R}^{n}} \max _{y\in\mathbb{R}^{n'}: Ax+By=c} f(x, y),\]
where $A\in\mathbb{R}^{m\times n}$ and $B\in\mathbb{R}^{m\times n'}$. 
Same as \cite{tsaknakis2021minimax}, we impose the following assumption.
\begin{assumption}\label{assu:cminimax}
$f(x, y)$ is twice-continuous differentiable and $\ell$-smooth and $\mu$-strongly concave. For every $x$, there exists a vector $y$ such that $Ax + By -c=0$. 
\end{assumption}
Let the Lagrangian of the problem be
\[L(x,y,\lambda)=f(x,y)-\lambda^\top(Ax+By-c).\]
Given any $x$, since the existence of $y$ that satisfies the equation constraint, Slater’s condition is satisfied. Therefore, strong duality holds for the inner maximization:
\[\max _{y: Ax+By=c} f(x, y)=\max _{y} \min_\lambda L(x, y,\lambda)= \min_\lambda\max _{y} L(x, y,\lambda),\]
which further implies 
\[\min _x \max _{y: Ax+By=c} f(x, y)=\min _x\min_\lambda\max _{y} L(x, y,\lambda)=\min_{(x,\lambda)}\max _{y} L(x, y,\lambda).\]
Thus, the original problem is equivalent to an unconstrained minimax problem. Define $\Phi(x,\lambda)=\max _{y} L(x, y,\lambda)$ and $y^*(x,\lambda)=\mathop{\mathrm{argmax}}_{y} L(x, y,\lambda)$. 
According to Danskin's theorem, we obtain the gradient of function $\Phi(x,\lambda)$:
\[\nabla \Phi(x,\lambda)=\begin{pmatrix}
\nabla f(x,y^*(x,\lambda))-A^\top\lambda\\
-(Ax+By^*(x,\lambda)-c)
\end{pmatrix}.
\]
Consequently, we introduce the following notion of $\epsilon$-stationarity. 
\begin{definition}
We call $x$  an $\epsilon$-stationary point if there exists $\lambda$ such that
\[\|\nabla f(x,y^*(x,\lambda))-A^\top\lambda\|<\epsilon \mbox{ and }
\|Ax+By^*(x,\lambda)-c\|<\epsilon.\]
\end{definition}
By direct computation, any $\epsilon$-stationary point of $\Phi$ will be a $\Theta(\epsilon)$-stationary point of the original problem. According to Assumption \ref{assu:cminimax}, $L(x,y,\lambda)$ is $\mu$-strongly concave in $y$ and $\ell'$-smooth in $(x,y,\lambda)$ with $\ell'=\sqrt{4\ell^2+2\max\{\|A\|^2,\|B\|^2\}}$, indicating the validity of Assumption \ref{assu:minimax2}. Due to the twice differentiability of $f$, we also assume that Assumption \ref{assu:minimax3} holds with $\nabla^2\Phi$ being $L_2$-Lipschitz continuous. If a good estimation of $L_2$ is hard to obtain, we may adopt a pessimistic upper bound $L_2\leq \big(1+\frac{\ell'}{\mu}\big)^3\ell_2$, where $\ell_2$ is the Lipschitz constant of $\nabla^2 f$. Hence we have the following result. 

\begin{corollary} 
Suppose Assumption \ref{assu:cminimax} holds and  $L(x,y,\lambda), \Phi(x,\lambda)$ satisfy Assumptions \ref{assu:minimax2} and \ref{assu:minimax3}. Then IAPUN with properly chosen parameters can find an $O(\epsilon)$-stationary point with
\(
\tilde{\mathcal{O}}\left(\frac{\sqrt{\kappa_y}\Delta (\ell')^{1/2} L_2^{1/4}}{\epsilon^{7 / 4}}\right)
\) complexity, where $\kappa_y = \ell'/\mu$.
\end{corollary}

\subsection{Bilevel Optimization} 
In this section, we consider the unconstrained bilevel optimization problem of the following form
\begin{equation}
    \label{prob:bilevel}
    \min _{x \in \mathbb{R}^n} \Phi(x):=f\left(x, y^*(x)\right), \quad \text { where } y^*(x)=\mathop{\mathrm{argmin}}_{y \in \mathbb{R}^{n'}} g(x, y),
\end{equation}
where $f$ is a nonconvex function with uniformly bounded partial gradient $\|\nabla_yf(x,\cdot)\|$, and $g$ is strongly convex in $y$. 
In this paper, we will show an $\tilde{O}(\epsilon^{-7/4})$ complexity by exploiting the additional second-order smoothness in $\Phi$. Formally, we consider the bilevel optimization problems under the following standard assumptions. 

\begin{assumption}\label{assu:bif}
The function $f$ is $L_f$-smooth in $(x,y)$ and $C$-Lipschitz continuous in $y$. The function $g$ is $L_g$-smooth in $(x,y)$ and $\mu$-strongly convex in $y$. Moreover, we assume $\nabla^2g$ to be $\rho_g$-Lipschitz continuous and $\nabla^2\Phi$ to be $L_2$-Lipschitz continuous. 
\end{assumption}
Note that by  \cite{foo2007efficient,ji2021lower}, the gradient $\nabla\Phi$ should be evaluated as \vspace{-0.05cm}
\[\nabla \Phi(x)=\nabla_x f\left(x, y^*(x)\right)-\nabla_{xy}^2 g\left(x, y^*(x)\right)\left[\nabla_y^2 g\left(x, y^*(x)\right)\right]^{-1} \nabla_y f\left(x, y^*(x)\right).\vspace{-0.05cm}\]
Consequently, $\Phi$ is $L_1$-smooth with $L_1=\big(L_f + \frac{C\rho_g}{\mu}\big)(1+\kappa_y)^2$ and $\kappa_y = \frac{L_g}{\mu}$. 
Also note that $\nabla \Phi(x)$ relies on both the gradient of $f$ and the Hessian of $g$.  \cite{ghadimi2018approximation} suggests the quantities $\mathrm{GC}(f,\epsilon)$, $\mathrm{GC}(g,\epsilon)$, and $\mathrm{HC}(g,\epsilon)$ to characterize the complexity, which is defined as the total gradient evaluations of $f$, the total gradient evaluations of $g$, and the total Hessian evaluations of $g$ for finding an $\epsilon$-stationary point of $\Phi$, respectively. \vspace{0.15cm}

\noindent\textbf{Approximate evaluation of $\Phi$ and $\nabla\Phi$.} For any tolerance $\varepsilon>0$ and $x\in\mathbb{R}^n$, let $y_x^\varepsilon$ be some point s.t. $\|y_x^\varepsilon-y^*(x)\|\leq\varepsilon$. For properly chosen $\varepsilon_1,\varepsilon_2>0$, we can set
\begin{equation}
\label{estimator:bilevel}
\Phi_x\!=\!f(x,y_x^{\varepsilon_1}) \mbox{ and } g_x \!=\! \nabla_{\!x} f\left(x, y_x^{\varepsilon_2}\right)\!-\!\nabla_{\!xy}^2 g\!\left(x, y_x^{\varepsilon_2}\right)\!\left[\nabla_y^2 g\left(x, y_x^{\varepsilon_2}\right)\right]^{-1} \!\nabla_{\!y} f\!\left(x, y_x^{\varepsilon_2}\right).
\end{equation}
Their evaluation complexities are summarized in Lemma \ref{error:bilevel}, whose proof is omitted. 
\begin{lemma}
\label{error:bilevel}
Let us set $\varepsilon_1=\frac{\delta_y}{C}$ and $\varepsilon_2 = \frac{\mu\Delta_y}{(1+\kappa_y)(L_f\mu + C\rho_g)}$ in \eqref{estimator:bilevel}, then Assumption \ref{error} holds. If Nesterov's accelerated gradient method is adopted, then it takes $O\big(\sqrt{\kappa_y}\ln \frac{1}{\varepsilon_i}\big)$ first-order oracles to find $y_x^{\varepsilon_i}$, for each $i=1$  and $2$ respectively.\vspace{0.15cm}
\end{lemma}

\noindent\textbf{Solving the proximal point subproblems.}  In Algorithm \ref{alg:main} (Line 8) and  Algorithm \ref{alg:Certify} (Line 9, 15), we need to solve proximal point subproblems of the following form
$$\min_{x\in\mathcal{X}} \,\, \psi(x):=f(x,y^*(x)) +\alpha\|x-p\|^2 + \gamma\|x-\tilde{x}\|^2$$
with $y^*(x)$ defined by \eqref{prob:bilevel}. Note that the function $\psi$ is $(2\alpha+2\gamma-L_1)$-strongly convex and $(2\alpha+2\gamma+L_1)$-smooth. With $\gamma=L_1\gg\alpha$, the condition number of $\psi$ satisfies $\kappa_x=\frac{2\alpha+2\gamma+L_1}{2\alpha+2\gamma-L_1}\leq3$. Consequently, solving this subproblem with ABA algorithm \cite{ghadimi2018approximation} takes both $\tilde{O}\big(\sqrt{\kappa_x}+\kappa_y)=\tilde{O}(\kappa_y)$ gradient evaluations of $f$ and Hessian evaluations $g$, and $\tilde{O}(\kappa_x+\kappa_y^2) = \tilde{O}(\kappa_y^2)$ gradient evaluations of $g$.

\begin{corollary}
Under Assumption \ref{assu:bif}, with appropriate parameters,  IAPUN takes \(
\tilde{\mathcal{O}}\big(\frac{\Delta L_1^{1/2} L_2^{1/4}}{\epsilon^{7 / 4}}\big)\) iterations to find an $\epsilon$-stationary point. Moreover, we have 
$$\mathrm{GC}(f, \epsilon) = \mathrm{HC}(g, \epsilon)=\tilde{\mathcal{O}}\left(\frac{\kappa_y\Delta L_1^{1/2} L_2^{1/4}}{\epsilon^{7 / 4}}\right)\quad \mbox{and}\quad \mathrm{GC}(g,\epsilon)=\tilde{\mathcal{O}}\left(\frac{\kappa_y^2\Delta L_1^{1/2} L_2^{1/4}}{\epsilon^{7 / 4}}\right).$$
\end{corollary}
This result improves the existing $\tilde{O}(\epsilon^{-2})$ complexity of \cite{ji2021bilevel,chen2023near} by a factor of $O(\epsilon^{-1/4})$. Moreover, two months after the release of our paper, \cite{yang2023accelerating} parallelly rediscovered the $\tilde{O}(\epsilon^{-7/4})$ complexity by leveraging the restarted AGD scheme \cite{li2023restarted}.

\section{Conclusions} \label{conclusion}
We propose a fast algorithm to find $\epsilon$-stationary points of NC-SC minimax problems. With additional second-order smoothness, we improve the complexity of first-order method from $\tilde{O}\big(\sqrt{\kappa_y}\ell\epsilon^{-2}\big)$ to $\tilde{O}\big(\sqrt{\kappa_y}\ell^{1/2}L_2^{1/4}\epsilon^{-7/4}\big)$. An lower bound of $\Omega\big(\sqrt{\kappa_y}\ell^{3/7}L_2^{2/7}\epsilon^{-12/7}\big)$ is constructed, with only an $\tilde{O}(\epsilon^{-1/28})$ gap compared to the upper bound. Hence a potential future direction is to figure out whether this gap is because the lower bound is not tight enough or IAPUN algorithm is sub-optimal. Moreover, because only approximate function values and gradients are required by our method, it can be further applied to the minimax problem with coupled linear constraints and the bilevel optimization problems, where we also obtain the $\tilde{O}\big(\epsilon^{-7/4}\big)$ complexities.

\newpage

\appendix
\section{Proof of Lemma \ref{lemma:Flag-5}}\label{proof:lemma:Flag-5}

\begin{proof}
Since $\text{FLAG}=5$, we know $p_k=w_k^x$.
\begin{equation}
\label{prop:Flag-5-1}
\|\nabla \Phi(p_k)\| \geq \|g_{p_k}\| - \|g_{p_k}-\nabla \Phi(p_k)\|  \ge  \frac{3\epsilon}{4} - \Delta_y\ge \frac{\epsilon}{2}.
\end{equation} 
Denote $w^{*x}_k = \mathop{\mathrm{argmin}}_{x}\hpk(x)+\gamma\|x-x_{k,t}\|^2$. By Line 15 of Algorithm \ref{alg:Certify}  and the $\gamma+2\alpha$-strong convexity of $\hpk(x)+\gamma\|x-x_{k,t}\|^2$, we can also derive the bound $\|w^x_k-w^{*x}_k\|\leq \sqrt{\frac{2\delta_x}{\gamma+2\alpha}}.$ By KKT-condition, $\nabla\hpk(w^{*x}_k)=-2\gamma(w^{*x}_k-x_{k,t})$. Consequently
\begin{eqnarray}
    \label{prop:Flag-5-2}
    \|\nabla \hpk(w^{x}_k)\| &\leq& \|\nabla\hpk(w^{*x}_k)\| + L_1\|w^{x}_k-w^{*x}_k\|\\
    & \leq & 2\gamma\|w^{x}_k-x_{k,t}\|+(L_1+2\gamma)\|w^{x}_k-w^{*x}_k\|\nonumber \\
    & \leq &  \frac{\epsilon}{20} + (L_1+2\gamma)\sqrt{\frac{2\delta_x}{\gamma+2\alpha}} 
    \leq  \frac{\epsilon}{10}.\nonumber
\end{eqnarray}
Combining \eqref{prop:Flag-5-1} and \eqref{prop:Flag-5-2} yields
$2\alpha\|w_k^x-x_{k,0}\| = \|\nabla\hpk(w_k^x)-\nabla\Phi(w_k^x)\|\geq \frac{\epsilon}{2}-\frac{\epsilon}{10} = \frac{2\epsilon}{5}.$
Further combining with the fact that $\hat{\phi}_{k(x_{k,t})}\leq \hat{\phi}_{k(x_{k,0})}+\chi+2\delta_y$ which is the necessary condition for returning $\text{Flag} = 5$, we have 
\begin{eqnarray*}
    &&\Phi(w^k_x) \!-\! \Phi(x_{k,0})\\ 
    & = & \hpk(w^k_x) \!-\! \Phi(x_{k,0}) \!-\! \alpha\|w_k^x\!-\!x_{k,0}\|^2 \leq  \hpk(x_{k,t}) \!+\! \delta_x \!-\! \Phi(x_{k,0}) \!-\! \frac{\epsilon^2}{25\alpha}\\
    & \leq & \hat{\phi}_{k(x_{k,t})} \!+\! 2\delta_y \!+\! \delta_x \!-\! \hat{\phi}_{k(x_{k,0})} \!-\!\frac{\epsilon^2}{25\alpha} \leq  \!-\!\frac{\epsilon^2}{25\alpha} \!+\! \cE \!+\! \delta_x \!+\! 4\delta_y \leq  \!-\! \frac{\epsilon^2}{50\alpha}
\end{eqnarray*}
where the second row in the above inequality is due to Line 15 of Algorithm \ref{alg:Certify} and  $2\alpha\|w_k^x-x_{k,0}\| \!\geq\! \frac{2\epsilon}{5}$. Finally, substituting $p_k = w^x_k$ and $p_{k-1} = x_{k,0}$ into the above inequality proves this lemma.
\end{proof}\vspace{0.2cm}

\section{Proof of Lemma \ref{lemma:Flag-3}} \label{proof:lemma:Flag-3}
\begin{proof}
For ease of discussion, let us set $t=T_k$ and use $x_{k,t}$ to be the point computed in Line 8 of Algorithm \ref{alg:main}. For the point recomputed in Line 8 of Algorithm \ref{alg:Certify}, we will write it as $\hat{x}_{k,t}$. When $\text{Flag}=3$, we have $w^x_k=\hat{x}_{k,t}$. For the subproblems that return $x_{k,t}$ and $\hat{x}_{k,t}$, we define their exact solutions as 
\begin{equation*}
    \begin{cases}
        x_{k,t}^* \!=\! \mathop{\mathrm{argmin}}_{x\in\RR^n}  \hpk(x) \!+\! \gamma\|x\!-\! \txkm\|^2\\
        \hat{x}_{k,t}^* \!=\! \mathop{\mathrm{argmin}}_{x\in\cX_k} \hpk(x) \!+\! \gamma\|x\!-\!\txkm\|^2
    \end{cases}
\end{equation*} 
Similar to previous discussion, we have $\|x_{k,t}-x_{k,t}^*\|\leq\sqrt{\frac{2\delta_x}{\gamma+2\alpha}}$ and $\|\hat{x}_{k,t}-\hat{x}_{k,t}^*\|\leq\sqrt{\frac{2\delta_x}{\gamma+2\alpha}}$.
To prove this lemma, let us consider two cases. \vspace{0.2cm}\\
\textbf{Case 1:} $x^*_{k,t} \in\cX_k$. In this case, we have $x^*_{k,t} = \hat x^*_{k,t}$. Therefore, 
\begin{eqnarray}
\|\hat x_{k,t}-x_{k,0}\| &\geq& \|x_{k,t}-x_{k,0}\| - \|x_{k,t}- x_{k,t}^*\| - \|x_{k,t}^*- \hat x_{k,t}^*\|-\|\hat x_{k,t}^*- \hat x_{k,t}\|\nonumber\\
&\geq& \frac{\alpha}{4L_2} - 2\sqrt{\frac{2\delta_x}{\gamma+2\alpha}} \geq  \frac{\alpha}{6L_2}. 
\end{eqnarray} 
\textbf{Case 2:} $x^*_{k,t}\notin\cX_k$, namely, $\|x^*_{k,t}-x_{k,0}\|\geq \frac{\alpha}{4L_2}$. In this situation, let us argue that must  $\hat x^*_{k,t}$ stay on the boundary of the ball $\cX_k$ by contradiction. Suppose $\hat{x}^*_{k,t}\in\mathrm{int}(\cX_k)$, then there exists some $\omega = (1-\lambda) \hat x^*_{k,t} + \lambda x^*_{k,t}$ with $\lambda\in(0,1)$ and $\lambda$ is small enough such that $\omega\in\cX_k$. Denote $\phi(x)=\hpk(x)+\gamma\|x-\txkm\|^2$. By the $(\gamma+2\alpha)$-strong convexity of $\phi$, we have  
$$\phi(\omega)\leq (1-\lambda)\phi(\hat{x}_{k,t}^*) + \lambda\phi(x^*_{k,t})\leq \phi(\hat{x}_{k,t}^*) - \lambda\cdot\frac{\gamma+2\alpha}{2}\|\hat x^*_{k,t} -x^*_{k,t}\|^2<\phi(\hat x^*_{k,t}).$$
This contradicts the optimality of $\hat x^*_{k,t}$. Now that we know $\hat x^*_{k,t}$ lies on the boundary of $\cX_k$, then $\|\hat x^*_{k,t}-x_{k,0}\| = \frac{\alpha}{4L_2}$. Consequently, we have 
$$\|\hat{x}_{k,t}-x_{k,0}\|\geq \|\hat x^*_{k,t}-x_{k,0}\| - \|\hat{x}_{k,t}^*-\hat{x}_{k,t}\|\geq \frac{\alpha}{4L_2} - \sqrt{\frac{2\delta_x}{\gamma+2\alpha}} \geq  \frac{\alpha}{6L_2}. $$
In both cases, we have $\|\hat{x}_{k,t}-x_{k,0}\|\geq\frac{\alpha}{6L_2}$. Finally, using the inequality $\hat{\phi}_{k(\hat x_{k,t})}\leq \hat{\phi}_{k(x_{k,0})}+\chi+2\delta_y$, which is the necessary condition for returning $\text{Flag}=3$, we have  
\begin{eqnarray*}
		\Phi(\hat x_{k,t}) - \Phi(x_{k,0}) & = & \hpk(\hat x_{k,t}) - \hpk(x_{k,0}) - \alpha\|\hat x_{k,t} - x_{k,0}\|^2\\
		&\leq & \hat{\phi}_{k(\hat x_{k,t})} - \hat{\phi}_{k(x_{k,0})}+2\delta_y - \frac{\alpha^3}{36L_2^2}\\
		& \leq & -\frac{\alpha^3}{36L_2^2} + \cE  + 4\delta_y
		\leq  -\frac{\alpha^3}{72L_2^2}.
	\end{eqnarray*} 
	Substituting $p_{k} = \hat{x}_{k,t}$ and $p_{k-1} = x_{k,0}$ into the above inequality proves the Lemma.  
\end{proof}

\section{Proof of Lemma \ref{lemma:aapp-intermediate}}\label{Proof:lemma:aapp-intermediate}
\begin{proof}
First,  let us denote the exact solution to the following subproblem as $x_{k,t}^*$:
\begin{eqnarray}
    \label{lm:aapp-1}
    x_{k,t}^*& \,\,= & \mathop{\mathrm{argmin}}_{x\in\cX}\,\,  \hpk(x) + \gamma\|x-\txkm\|^2.\nonumber
\end{eqnarray} 
For $t\leq T_k-1$, $\cX = \RR^n$. For $t=T_k$, $\cX = \RR^n$ in case $\text{Flag} = 1,4,5$ and $\cX=\cX_k$ in case $\text{Flag} = 2,3$. In all cases, by Line 8 of Algorithm \ref{alg:main}, we have 
$$\delta_x\geq\hpk(x_{k,t})+\gamma\|x_{k,t}-\txkm\|^2-\hpk(x_{k,t}^*)-\gamma\|x_{k,t}^*-\txkm\|^2 \geq \frac{\gamma+2\alpha}{2}\|x_{k,t}-x_{k,t}^*\|^2$$
where we use the fact that $\hpk(x)+\gamma\|x-\txkm\|^2$ is $(\gamma+2\alpha)$-strongly convex because we choose $\gamma\geq \ell$. This further indicates that 
\begin{equation}
    \label{lm:aapp-0}
    \|x_{k,t}-x_{k,t}^*\|\leq \sqrt{\frac{2\delta_x}{\gamma+2\alpha}}\qquad\mbox{for}\qquad 1\leq t\leq T_k.
\end{equation}
Second, when $\cX = \RR^n$, the KKT condition indicates $\nabla \hpk(x_{k,t}^*) + 2\gamma(x_{k,t}^*-\txkm) = 0$. When $\cX = \cX_k$, the KKT condition indicates 
$$\langle\nabla \hpk(x_{k,t}^*) + 2\gamma(x_{k,t}^*-\txkm),\bar x-x_{k,t}^*\rangle \geq 0,\qquad\mbox{for}\qquad \bar x\in\cX_k.$$
Therefore, in both cases, for any $\bar x\in\cX_k$ we have 
\begin{eqnarray*}
    & & \Big\{\hpk(\bar x) + \gamma\|\bar x-\txkm\|^2\Big\} - \Big\{\hpk(x_{k,t}^*) + \gamma\|x_{k,t}^*-\txkm\|^2\Big\}\\
    & \geq & \hpk(\bar x) - \hpk(x_{k,t}^*) + \gamma\|\bar x-\txkm\|^2 - \gamma\|x_{k,t}^*-\txkm\|^2 \\
    & & - \big(\nabla \hpk(x_{k,t}^*) + 2\gamma(x_{k,t}^*-\txkm) \big)^\top(\bar x-x_{k,t}^*)\\
    & = & \hpk(\bar x) - \hpk(x_{k,t}^*) - \nabla \hpk(x_{k,t}^*)^{\!\top}\!(\bar x-x_{k,t}^*) + \gamma\|\bar x - x_{k,t}^*\|^2.
\end{eqnarray*} 
Rearranging the terms yields
\begin{eqnarray}
    \label{lm:aapp-2}
    \qquad\quad\hpk(\bar x) & \geq & \hpk(x_{k,t}^*) \!+\! \gamma\|x_{k,t}^*\!-\!\txkm\|^2 \!-\! \gamma\|\bar x\!-\!\txkm\|^2 \!+\! \big(\gamma\!+\!\frac{\alpha}{2}\big)\|\bar x \!-\! x_{k,t}^*\|^2 \\
    && + \big(\hpk(\bar x) - \hpk(x_{k,t}^*) - \nabla \hpk(x_{k,t}^*)^{\!\top}\!(\bar x-x_{k,t}^*) - \frac{\alpha}{2}\|\bar x  - x_{k,t}^*\|^2\big)\nonumber\\
    & \geq & \hpk(x_{k,t}) \!+\! \gamma\|x_{k,t}\!-\!\txkm\|^2\!-\!\delta_x \!-\! \gamma\|\bar x\!-\!\txkm\|^2 \!+\! \big(\gamma\!+\!\frac{\alpha}{2}\big)\|\bar x \!-\! x_{k,t}^*\|^2\nonumber \\
    && + \big(\hpk(\bar x) - \hpk(x_{k,t}^*) - \nabla \hpk(x_{k,t}^*)^{\!\top}\!(\bar x-x_{k,t}^*) - \frac{\alpha}{2}\|\bar x  - x_{k,t}^*\|^2\big).\nonumber
\end{eqnarray}
For the last term of \eqref{lm:aapp-2}, we have 
\begin{eqnarray}
    \label{lm:aapp-3}
    \qquad\quad&&\hpk(\bar x) - \hpk(x_{k,t}^*) - \nabla \hpk(x_{k,t}^*)^{\!\top}\!(\bar x-x_{k,t}^*) - \frac{\alpha}{2}\|\bar x  - x_{k,t}^*\|^2\\
    &=& \hpk(\bar x) - \hpk(x_{k,t}) - \nabla\hpk(x_{k,t})^\top (\bar x - x_{k,t}) - \frac{\alpha}{2}\|\bar x-x_{k,t}\|^2 \nonumber\\
    & &\!\! + \big(\hpk(x_{k,t}) \!-\! \hpk(x_{k,t}^*) \!-\! \nabla\hpk(x_{k,t}^*)^{\!\top}\! (x_{k,t}\!-\!x_{k,t}^*)\big)\!\!+\! \frac{\alpha}{2}\big(\|\bar x\!-\!x_{k,t}\|^2\!-\!\|\bar x\!-\!x_{k,t}^*\|^2\big) \nonumber\\
    & & \!\!+ \big(\nabla\hpk(x_{k,t})-\nabla\hpk(x_{k,t}^*)\big)^\top(\bar x - x_{k,t}).\nonumber
\end{eqnarray}
By Assumption \ref{assu:Phi}, we have 
$$\hpk(x_{k,t}) \!-\! \hpk(x_{k,t}^*) \!-\! \nabla\hpk(x_{k,t}^*)^{\!\top}\! (x_{k,t}\!-\!x_{k,t}^*)\geq -\frac{\ell-2\alpha}{2}\|x_{k,t}-x_{k,t}^*\|^2 $$
and 
\begin{eqnarray}
    &&\frac{\alpha}{2}\big(\|\bar x\!-\!x_{k,t}\|^2\!-\!\|\bar x\!-\!x_{k,t}^*\|^2\big)+ \big(\nabla\hpk(x_{k,t})-\nabla\hpk(x_{k,t}^*)\big)^\top(\bar x - x_{k,t})\nonumber\\
    & = & \big(\nabla\Phi(x_{k,t})\!-\!\nabla\Phi(x_{k,t}^*)\!+\! 2\alpha(x_{k,t}\!-\!x_{k,t}^*)\big)^{\!\top}\!(\bar x \!-\! x_{k,t}) \!+\! \frac{\alpha}{2}\big(\|\bar x\!-\!x_{k,t}\|^2\!-\!\|\bar x\!-\!x_{k,t}^*\|^2\big)\nonumber\\
    & = & \big(\nabla\Phi(x_{k,t})\!-\!\nabla\Phi(x_{k,t}^*)\big)^{\!\top}\!(\bar x \!-\! x_{k,t}) - \frac{\alpha}{2}\|x_{k,t}-x_{k,t}^*\|^2 + \alpha(\bar x-x_{k,t})^\top(x_{k,t}-x_{k,t}^*)\nonumber\\
    & \geq & -(\ell+L_1+\alpha)\|x_{k,t}-x_{k,t}^*\|\|\bar x - x_{k,t}\| - \frac{\alpha}{2}\|x_{k,t}-x_{k,t}^*\|^2. \nonumber
\end{eqnarray}
Substituting the above two inequalities into \eqref{lm:aapp-3} and applying \eqref{lm:aapp-0} yields
\begin{eqnarray*}
    &&\hpk(\bar x) - \hpk(x_{k,t}^*) - \nabla \hpk(x_{k,t}^*)^{\!\top}\!(\bar x-x_{k,t}^*) - \frac{\alpha}{2}\|\bar x  - x_{k,t}^*\|^2\\
    &\geq& \hpk(\bar x) - \hpk(x_{k,t}) - \nabla\hpk(x_{k,t})^\top (\bar x - x_{k,t}) - \frac{\alpha}{2}\|\bar x-x_{k,t}\|^2\\
    && - \delta_x - (\ell+L_1+\alpha)\sqrt{\frac{2\delta_x}{\gamma+2\alpha}}\cdot\|\bar x - x_{k,t}\|.
\end{eqnarray*}
Further substituting this inequality to \eqref{lm:aapp-2} yields:
\begin{align}
    \label{lm:aapp-4}
    \hpk(\bar x) \geq &\,\,\hpk(x_{k,t}) \!+\! \gamma\|x_{k,t}\!-\!\txkm\|^2 - \gamma\|\bar x-\txkm\|^2 +\big(\gamma\!+\!\frac{\alpha}{2}\big)\|\bar x \!-\! x_{k,t}^*\|^2 \\
    & \,\,\, + \tilde{D}_k(\bar x, x_{k,t}) -3\delta_x - (2\ell+L_1+\alpha)\sqrt{\frac{2\delta_x}{\gamma+2\alpha}}\cdot\|\bar x - x_{k,t}\|.\nonumber
\end{align}
Noting that
\begin{align*}
    &\gamma\|x_{k,t} \!-\! \txkm\|^2 - \gamma\|\bar x \!-\! \txkm\|^2 = 2\gamma(x_{k,t}\!-\!\bar x)^{\top}(x_{k,t}\!-\!\txkm) - \gamma\|\bar x \!-\! x_{k,t}\|^2 \\
    & \qquad\qquad\qquad =  -2\gamma(\bar x\!-\! \txkm)^\top(x_{k,t}\!-\!\txkm) + 2\gamma\|x_{k,t}\!-\!\txkm\|^2 - \gamma\|\bar x \!-\! x_{k,t}\|^2,
\end{align*}
we have
\begin{eqnarray*}
    &&\gamma\|x_{k,t} - \txkm\|^2 - \gamma\|\bar x-\txkm\|^2  +\big(\gamma + \frac{\alpha}{2}\big) \|\bar x  -  x_{k,t}^*\|^2\\
    & = & -2\gamma(\bar x-\txkm)^\top(x_{k,t}-\txkm) + 2\gamma\|x_{k,t}-\txkm\|^2 \\
    & &  +\big(\gamma + \frac{\alpha}{2}\big) \|\bar x  -  x_{k,t}^*\|^2-\gamma\|\bar x - x_{k,t}\|^2\nonumber\\
    & = & -2\gamma(\bar x-\txkm)^\top(x_{k,t}-\txkm) + 2\gamma\|x_{k,t}-\txkm\|^2 + \frac{\alpha}{2}\|\bar x - x_{k,t}\|^2  \\
    && + \big(\gamma + \frac{\alpha}{2}\big)\|x_{k,t} - x_{k,t}^*\|^2 + 2\big(\gamma + \frac{\alpha}{2}\big) (\bar x - x_{k,t})^\top (x_{k,t} - x_{k,t}^*)\\
    & \geq & -2\gamma(\bar x-\txkm)^\top(x_{k,t}-\txkm) + 2\gamma\|x_{k,t}-\txkm\|^2 + \frac{\alpha}{4}\|\bar x - x_{k,t}\|^2 - 8\kappa_x\delta_x, 
\end{eqnarray*}
where the last inequality is due to  
inequality \eqref{lm:aapp-0},  $\kappa_x=\gamma/\alpha\geq1$, and 
$$(\bar x \!-\! x_{k,t})^{\!\top}\! (x_{k,t} \!-\! x_{k,t}^*) \!\geq\! - \frac{\alpha\|\bar x - x_{k,t}\|^2}{4(2\gamma+\alpha)} \!-\! \frac{2\gamma+\alpha}{\alpha}\|x_{k,t}\!-\!x_{k,t}^*\|^2.$$
Substituting the above result to \eqref{lm:aapp-4} yields 
\begin{eqnarray*}
    \hpk(\bar x) &\geq & \hpk(x_{k,t}) \!-\! 2\gamma(\bar x \!-\!\txkm)^{\!\top}\!(x_{k,t} \!-\! \txkm) \!+\! 2\gamma\|x_{k,t}\!-\!\txkm\|^2\nonumber\\
    && + \frac{\alpha}{4}\|\bar x \!-\! x_{k,t}\|^2 \!+\! \tilde{D}_k(\bar x, x_{k,t}) \!-\! 11\kappa_x\delta_x \!-\! (\!2\ell+L_1\!+\!\alpha)\sqrt{\frac{2\delta_x}{\gamma\!+\!2\alpha}}\cdot\|\bar x \!-\! x_{k,t}\|,
\end{eqnarray*}
which completes the proof. 
\end{proof}

\section{Proof of Lemma \ref{lemma:aapp}}\label{proof:lemma:aapp}
\begin{proof}
By the assumption that $\tilde{D}_k(x_{k,s-1},x_{k,s})\geq0,$ $\tilde{D}_{k}(\bar x, x_{k,s})\geq0$, $1\leq s\leq t$ and the definition of the positive constant $d$,  Lemma \ref{lemma:aapp-intermediate} indicates that 
\begin{align}
    \label{lm:aapp-5}
    \hpk(x) &\geq\,  \hpk(x_{k,s}) - 2\gamma(x -\tilde{x}_{k,s-1})^{\top}(x_{k,s} - \tilde{x}_{k,s-1}) \\
    &\quad\,\,+ 2\gamma\|x_{k,s}-\tilde{x}_{k,s-1}\|^2 + \frac{\alpha}{4}\|x - x_{k,s}\|^2 - \chi_0\nonumber
\end{align} 
where $x \in \{x_{k,s-1}, \bar x\}$ and $\chi_0 :=11\kappa_x\delta_x+(2\ell+L_1+\alpha)\sqrt{\frac{2\delta_x}{\gamma+2\alpha}}\cdot d$. As a result,
\begin{eqnarray}
    & & \frac{1}{2\sqrt{\kappa_x}}\hpk(\bar x) + \Big(1-\frac{1}{2\sqrt{\kappa_x}}\Big)\hpk(x_{k,s-1})\nonumber\\
    & \geq & \frac{1}{2\sqrt{\kappa_x}}\Big(\!\hpk(x_{k,s}) \!-\! 2\gamma(\bar x \!-\!\tilde{x}_{k,s-1})^{\!\top}\!(x_{k,s} \!-\! \tilde{x}_{k,s-1}) \!+\! 2\gamma\|x_{k,s}\!-\!\tilde{x}_{k,s-1}\|^2 \!+\! \frac{\alpha}{4}\|\bar{x} \!-\! x_{k,s}\|^2\Big)\nonumber\\ &&+\Big(\!1\!-\!\frac{1}{2\sqrt{\kappa_x}}\Big)\!\Big(\hpk(x_{k,s}) \!-\! 2\gamma(x_{k,s-1} \!-\! \tilde{x}_{k,s-1})^{\top}(x_{k,s} \!-\! \tilde{x}_{k,s-1}) \!+\! 2\gamma\|x_{k,s}\!-\!\tilde{x}_{k,s-1}\|^2\Big)\nonumber\\
    & & +\frac{\alpha}{4}\Big(1-\frac{1}{2\sqrt{\kappa_x}}\Big)\|x_{k,s-1}-x_{k,s}\|^2  - \chi_0  \nonumber\\
    & = & \hpk(x_{k,s}) - 2\gamma\!\cdot\!\left(\frac{\bar x}{2\sqrt{\kappa_x}} + \Big(1-\frac{1}{2\sqrt{\kappa_x}}\Big)\!\cdot\! x_{k,s-1} - \tilde{x}_{k,s-1}\right)^{\!\!\top}\!\!\big(x_{k,s}-\tilde{x}_{k,s-1}\big)      \nonumber\\
    & &  + 2\gamma\|x_{k,s}\!-\!\tilde{x}_{k,s-1}\|^2 +\frac{\alpha}{8\sqrt{\kappa_x}}\|\bar x -x_{k,s}\|^2 +\frac{\alpha}{4}\Big(1-\frac{1}{2\sqrt{\kappa_x}}\Big)\|x_{k,s-1}-x_{k,s}\|^2 - \chi_0 \nonumber.
\end{eqnarray}
Consequently, we have 
\begin{eqnarray}
    \label{lm:aapp-6}
    \hpk(x_{k,s})\!-\!\hpk(\bar x) & \!\leq\! & \Big(1\!-\!\frac{1}{2\sqrt{\kappa_x}}\Big)\!\left(\hpk(x_{k,s-1})\!-\!\hpk(\bar x)\right) \!+\! \chi_0 \!-\!2\gamma\|x_{k,s}\!-\!\tilde{x}_{k,s-1}\|^2 \nonumber\\
    & & + 2\gamma\!\cdot\!\left(\frac{\bar x}{2\sqrt{\kappa_x}} + \Big(1-\frac{1}{2\sqrt{\kappa_x}}\Big)\!\cdot\! x_{k,s-1} - \tilde{x}_{k,s-1}\right)^{\!\!\top}\!\!\big(x_{k,s}-\tilde{x}_{k,s-1}\big)\\
    & & -\frac{\alpha}{8\sqrt{\kappa_x}}\|\bar x - x_{k,s}\|^2 - \frac{\alpha}{8}\|x_{k,s}-x_{k,s-1}\|^2\nonumber.
\end{eqnarray}
Define $z_{k,s}:=\tilde{x}_{k,s} + 2\sqrt{\kappa_x}(\tilde{x}_{k,s} - x_{k,s})$. Note that $\tilde{x}_{k,s} = x_{k,s} + \frac{2\sqrt{\kappa_x}-1}{2\sqrt{\kappa_x}+1}(x_{k,s}-x_{k,s-1})$, we have the following recursive formula for $z_{k,s}$:
$$z_{k,s} = \Big(1-\frac{1}{2\sqrt{\kappa_x}}\Big)\cdot z_{k,s-1} + 2\sqrt{\kappa_x}\cdot(x_{k,s}-\tilde{x}_{k,s-1}) + \frac{1}{2\sqrt{\kappa_x}}\cdot\tilde{x}_{k,s-1}.$$
Consequently, we have 
\begin{eqnarray}
    \label{lm:aapp-7}
    \|z_{k,s}-\bar x\|^2 &=& \left\|\Big(1-\frac{1}{2\sqrt{\kappa_x}}\Big)z_{k,s-1} + \frac{\tilde{x}_{k,s-1}}{2\sqrt{\kappa_x}}-\bar x\right\|^2 + 4\kappa_x\left\|x_{k,s}-\tilde{x}_{k,s-1}\right\|^2\nonumber\\
    & & +4\sqrt{\kappa_x}\left(\Big(1-\frac{1}{2\sqrt{\kappa_x}}\Big)\cdot z_{k,s-1} + \frac{\tilde{x}_{k,s-1}}{2\sqrt{\kappa_x}}-\bar x\right)^{\!\!\top}\!\!\left(x_{k,s}-\tilde{x}_{k,s-1}\right).
\end{eqnarray} 
By definition of $z_{k,s-1}$, we have 
\begin{equation}
    \label{lm:aapp-8}
    (1-\frac{1}{2\sqrt{\kappa_x}})z_{k,s-1} + \frac{\tilde{x}_{k,s-1}}{2\sqrt{\kappa_x}} = 2\sqrt{\kappa_x}\tilde{x}_{k,s-1}-(2\sqrt{\kappa_x}-1)x_{k,s-1}.
\end{equation}
By the fact that $(a+b)^2\leq(1+c)a^2 + (1+c^{-1}) b^2$ for $\forall c>0$, we have
\begin{equation}
    \label{lm:aapp-9}
    \|\tilde{x}_{k,s-1}-\bar x\|^2\leq \frac{5}{4}\|x_{k,s}-\bar x\|^2 + 5\|x_{k,s}-\tilde{x}_{k,s-1}\|^2.
\end{equation}
Similarly, we also have 
\begin{eqnarray}
    \label{lm:aapp-10}
    & &\,\, \left\|\Big(1-\frac{1}{2\sqrt{\kappa_x}}\Big)z_{k,s-1} + \frac{\tilde{x}_{k,s-1}}{2\sqrt{\kappa_x}}-\bar x\right\|^2\\
    & \leq & \Big(\!1\!-\!\frac{1}{2\sqrt{\kappa_x}}\Big)^{\!2}\! \Big(1\!+\!\frac{5}{8\sqrt{\kappa_x}-5}\Big)\!\!\left\|z_{k,s-1}\!-\!\bar x\right\|^2 \!+\! \frac{1}{4\kappa_x}\Big(\!1\!+\!\frac{8\sqrt{\kappa_x}-5}{5}\Big)\!\!\left\|\tilde{x}_{k,s-1} \!-\! \bar x \right\|^2\nonumber\\
    &\leq & \Big(\!1\!-\!\frac{1}{6\sqrt{\kappa_x}}\Big)\!\!\left\|z_{k,s-1}\!-\!\bar x\right\|^2 \!+\! \frac{2\left\|\tilde{x}_{k,s-1} \!-\! \bar x \right\|^2}{5\sqrt{\kappa_x}}.   \nonumber
\end{eqnarray} 
Now, combining the inequalities \eqref{lm:aapp-7}-\eqref{lm:aapp-10} yields
\begin{eqnarray}
    \label{lm:aapp-11}
    \left\|z_{k,s}-\bar x\right\|^2 &\leq& \left(1-\frac{1}{6\sqrt{\kappa_x}}\right)\left\|z_{k,s-1}-\bar x\right\|^2 + \frac{\|x_{k,s}-\bar x\|^2}{2\sqrt{\kappa_x}} + 6\kappa_x\|x_{k,s}-\tilde{x}_{k,s-1}\|^2\,\,\qquad \nonumber\\
    & & + 8\kappa_x\left(\tilde{x}_{k,s-1}-\Big(1-\frac{1}{2\sqrt{\kappa_x}}\Big)x_{k,s-1}-\frac{\bar x}{2\sqrt{\kappa_x}}\right)^{\!\!\top}\!\left(x_{k,s}-\tilde{x}_{k,s-1}\right).
\end{eqnarray}
Combining \eqref{lm:aapp-11} with \eqref{lm:aapp-6} yields
\begin{eqnarray*}
    &&\hpk(x_{k,s})-\hpk(\bar x) + \frac{\alpha}{4}\|z_{k,s}-\bar x\|^2\\
    & \leq & \left(1-\frac{1}{6\sqrt{\kappa_x}}\right)\left(\hpk(x_{k,s-1})-\hpk(\bar x) + \frac{\alpha}{4}\|z_{k,s-1}-\bar x\|^2\right) + \chi_0.
\end{eqnarray*}
Consequently, by the fact that $z_{k,0} = x_{k,0}$ and $\chi = 6\sqrt{\kappa_x}\chi_0$, we prove this lemma. 
\end{proof}

\section{Proof of Lemma \ref{lemma:Flag-24}}\label{proof:lemma:Flag-24}
\begin{proof}
First of all, when Algorithm \ref{alg:main} executes Line 16, by Lemma \ref{lemma:distance}, we know
$\left\{x_{k,t}\right\}_{t=0}^{T_k}\cup\{w^x_k\}\subseteq \cX_k$. Next, let us prove this lemma holds for each Flag value. \vspace{0.3cm}\\
\textbf{Case Flag = 2.} In this situation,  $\hat{\phi}_{k(x_{k,T_k})}>\hat{\phi}_{k(x_{k,0})}+\chi+2\delta_y$ and the algorithm returns $w^x_k=x_{k,0}$. Now let us set $\bar x  = x_{k,0}$ and $d = \frac{\alpha}{2L_2}$ in Lemma \ref{lemma:aapp}. If  $\tilde{D}_k(x_{k,s-1},x_{k,s})\geq0$ and $\tilde{D}_k(\bar x, x_{k,s})\geq0$ hold for $1\leq s\leq T_k$, then Lemma \ref{lemma:aapp} immediately indicates that $\hpk(x_{k,T_k}) - \hpk(x_{k,0})  \leq   \chi$. This inequality further indicates 
$$\hat{\phi}_{k(x_{k,T_k})}-\hat{\phi}_{k(x_{k,0})}\leq \hpk(x_{k,T_k})-\hpk(x_{k,0}) + 2\delta_y\leq \cE + 2\delta_y,$$
which will cause a contradiction. As a result, there must be some $\tilde{D}_k(x_{k,s-1},x_{k,s})<0$ or $\tilde{D}_k(x_{k,0},x_{k,s})<0$.\vspace{0.3cm} \\
\textbf{Case Flag = 4.} In this case, set $\bar x  = w^x_k$ and $d = \frac{\alpha}{2L_2}$ in Lemma \ref{lemma:aapp}.  If  $\tilde{D}_k(x_{k,s-1},x_{k,s})\geq0$ and $\tilde{D}_k(\bar x, x_{k,s})\geq0$ hold for all $1\leq s\leq T_k$, then we have
\begin{align}
    \gamma\|w^x_k-x_{k,T_k}\|^2 &\leq  \hpk(x_{k,T_k}) - \hpk(w^x_k) + \delta_x \nonumber\\
    & \leq  \Big(1-\frac{1}{6\sqrt{\kappa_x}}\Big)^t\cdot\left(\hpk(x_{k,0})-\hpk(w^x_k) + \frac{\alpha}{4}\|w^x_k-x_{k,0}\|^2\right) + \chi + \delta_x\nonumber\\
    & \leq  \Big(1-\frac{1}{6\sqrt{\kappa_x}}\Big)^t\cdot\left(E_k + 2\delta_y\right) + \chi + \delta_x \nonumber\\
    & \leq   \Big(1-\frac{1}{6\sqrt{\kappa_x}}\Big)^t\cdot E_k + \chi + \delta_x + 2\delta_y\nonumber,
\end{align} 
which contradicts the necessary condition for returning $\text{Flag} = 4$. Therefore, similar to Case $\text{Flag}=2$,  there must exist some $\tilde{D}_k(x_{k,s-1},x_{k,s})<0$ or $\tilde{D}_k(x_{k,0},x_{k,s})<0$.
         
Next, let us prove that Algorithm \ref{alg:Exploit-Ncvx} must be able to find some $(u,v)$ through Line 3 to Line 10. According to our previous discussion, suppose $\tilde{D}_k(x_{k,s-1},x_{k,s})<0$ for some $1\leq s\leq T_k$. Therefore, we have \vspace{-0.2cm}
\begin{align}
    &\,\,\zeta\big(x_{k,s-1},x_{k,s}\big) \nonumber\\
    = &\,\,\hat{\phi}_{k (x_{k,s-1})\!}-\!\hat{\phi}_{k (x_{k,s})}\!-\!\hat{g}_{k(x_{k,s})}^{\!\top}\!(x_{k,s-1}\!-\!x_{k,s})\!-\!\frac{\alpha}{2}\|x_{k,s-1}\!-\!x_{k,s}\|^2\nonumber\\
    \leq & \,\,  \hpk(x_{k,s-1})\!-\!\hpk(x_{k,s}) + 2\delta_y  -\!  \nabla\hpk(x_{k,s})^{\!\top}\!(x_{k,s-1}\!-\!x_{k,s})     \!-\!\frac{\alpha}{2}\|x_{k,s-1}\!-\!x_{k,s}\|^2 \nonumber\\
    &\, + \left(\nabla\hpk(x_{k,s})-\hat{g}_{k(x_{k,s})}\right)^{\!\top}\!(x_{k,s-1}\!-\!x_{k,s})    \nonumber\\
    \leq & \,\, \tilde{D}_k(x_{k,s-1},x_{k,s})  - (\delta_x-2\delta_y)  -  \left(\sqrt{\frac{2\delta_x}{\gamma\!+\!\alpha}}\ell-\Delta_y\right)\cdot\|x_{k,s-1}\!-\!x_{k,s}\|    \nonumber\\
    < & \,\,   0 - 2\delta_y - \Delta_y\cdot\|x_{k,s-1}-x_{k,s}\| .    \nonumber
\end{align} 
Therefore, we have 
$$\tilde{D}_k(x_{k,s-1},\!x_{k,s})\!<\zeta\big(x_{k,s-1},x_{k,s}\big)<\!-\! 2\delta_y \!-\Delta_y\cdot\|x_{k,s-1}\!-\!x_{k,s}\|.$$
Similarly, we have 
$$\tilde{D}_k(x_{k,0},x_{k,s})\!<\!0\Longrightarrow\zeta\big(\!x_{k,0},\!x_{k,s}\!\big)\!<\!-\!2\delta_y\! -\Delta_y\cdot\|x_{k,0}-x_{k,s}\|.$$
Therefore, $(u,v)$ will be nonempty after running Line 3 to Line 10 of Algorithm \ref{alg:Exploit-Ncvx}. 
      
Finally, we prove that $(u,v)$ satisfies required properties. According to Algorithm \ref{alg:Exploit-Ncvx}, $u\in\left\{x_{k,s}\right\}_{s=0}^{T_k-1}\cup\left\{w^x_k\right\}$. Let $w$ be the associated $y$ variable of $u$ in the algorithm. By Lemma \ref{lemma:LeqFx0}, we have $\hat{\phi}_{k(u)}\leq  \hat{\phi}_{k(x_{k,0})}+\chi+\delta_x +4\delta_y$, which indicates 
$$\Phi(u)-\Phi(x_{k,0})\leq \hpk(u)-\Phi(x_{k,0})\leq \hat{\phi}_{k(u)} -\phi_{x_{k,0}}+2\delta_y\leq \cE+\delta_x+6\delta_y.$$
For ease of discussion, suppose $(u,v) \!=\! (x_{k,s-1},x_{k,s})$ for some $1 \!\leq\! s \!\leq\! T_k$. Then Line 5 of Algorithm \ref{alg:Exploit-Ncvx} indicates that $\zeta\big(\!x_{k,s-1},x_{k,s}\!\big)\!<\!-2\delta_y \!-\Delta_y\cdot\|x_{k,s-1}\!-\!x_{k,s}\|.$ Consequently, 
\begin{eqnarray}
    &&\!\hpk(x_{k,s-1})\!-\!\hpk(x_{k,s})  \! -\!  \nabla\hpk(x_{k,s})^{\!\top}\!(x_{k,s-1}\!-\!x_{k,s})     \!-\!\frac{\alpha}{2}\|x_{k,s-1}\!-\!x_{k,s}\|^2 \nonumber\\
    & \leq & \hat{\phi}_{k(x_{k,s-1})} - \hat{\phi}_{k(x_{k,s})} + 2\delta_y - \hat{g}_{k(x_{k,s})}^\top(x_{k,s-1}-x_{k,s}) \nonumber\\
    & & -\frac{\alpha}{2}\|x_{k,s-1}\!-\!x_{k,s}\|^2 - \left(\nabla\hpk(x_{k,s})-\hat{g}_{k(x_{k,s})}\right)^{\!\top}\!(x_{k,s-1}\!-\!x_{k,s})\nonumber\\
    & \leq & \zeta\big(\!x_{k,s-1},\!x_{k,s}\!\big) + 2\delta_y + \Delta_y\|x_{k,s-1}-x_{k,s}\|< 0.\nonumber
\end{eqnarray}
Similar argument applies to the case where $(u,v) = (x_{k,0},x_{k,s})$.  
\end{proof}

\section{Preliminary Numerical Experiments}  In this appendix, we present a preliminary experiment on a robust non-linear regression problem to validate the performance of our algorithm. Given the set of data points and the corresponding labels $\{(x_i,y_i)\}_{i=1}^N$, we aim to solve the NC-SC minimax problem \vspace{-0.1cm}
\begin{align}
    \min_{x\in\mathbb{R}^d} \!\!\!\max_{\,\,\,\,\,\,\{\delta_i\in\mathbb{R}^d\}_{i=1}^N}\,\, f\left(w,\{\delta_i\}_{i=1}^N\right) = \frac{1}{N}\sum_{i=1}^N\left(\phi\left((x_i+\delta_i)^\top w-y_i)\right)-\frac{\rho}{2}\|\delta_i\|^2\right)\nonumber \vspace{-0.1cm}
\end{align}
where $\phi(\theta)=\frac{\theta^2}{1+\theta^2}$ is the smooth biweight loss function \cite{beaton1974fitting}. Moreover, to encourage model robustness, an adversarial perturbation $\delta_i$ is added to each data point $x_i$ while a quadratic penalty term on $\delta_i$ controls the level perturbation, see e.g. \cite{lin2020gradient}. 

\begin{figure}[h]
\centering
\hspace{-0.05cm}\begin{subfigure}{.25\linewidth}
\includegraphics[width=\linewidth]{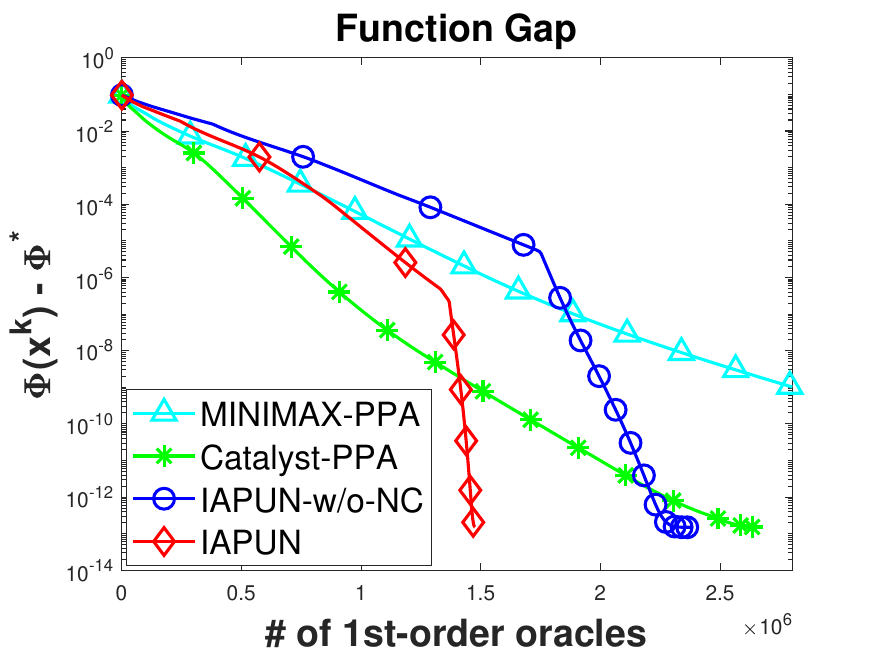}  
\end{subfigure}\hspace{-0.35cm}
\begin{subfigure}{.25\linewidth}
  \includegraphics[width=\linewidth]{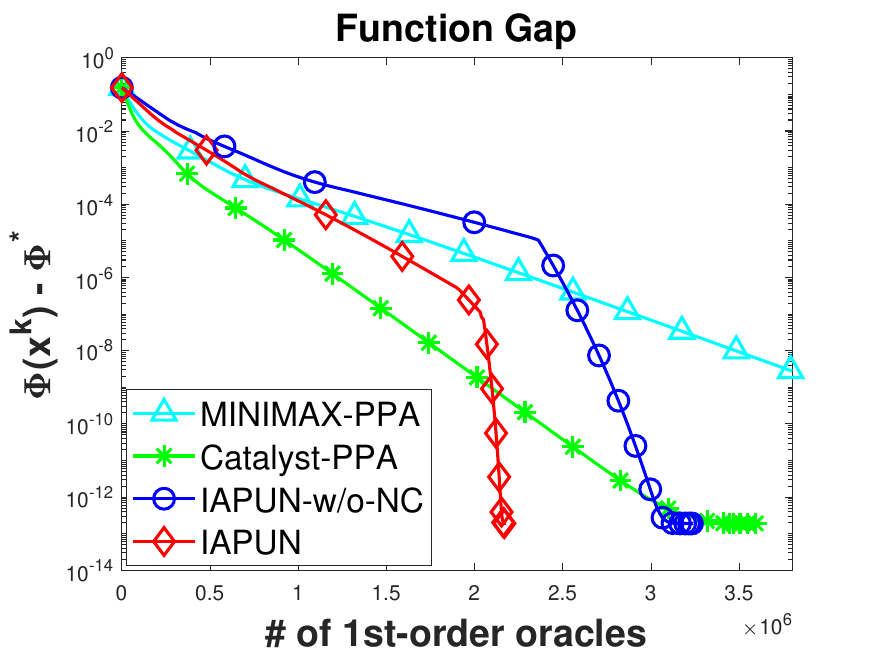} 
\end{subfigure}\hspace{-0.35cm}
\begin{subfigure}{.25\linewidth}
  \includegraphics[width=\linewidth]{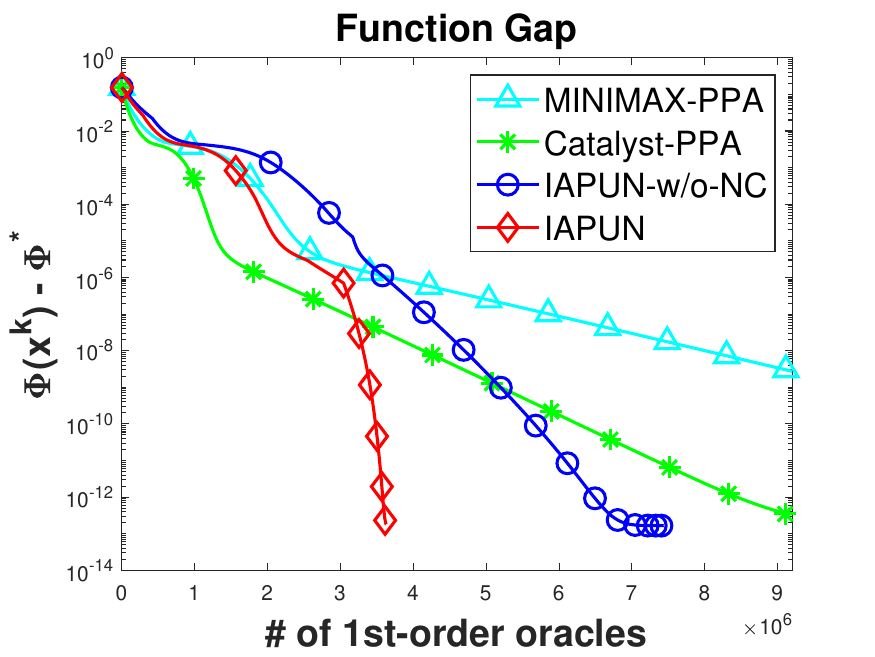} 
\end{subfigure}\hspace{-0.35cm}
\begin{subfigure}{.25\linewidth}
  \includegraphics[width=\linewidth]{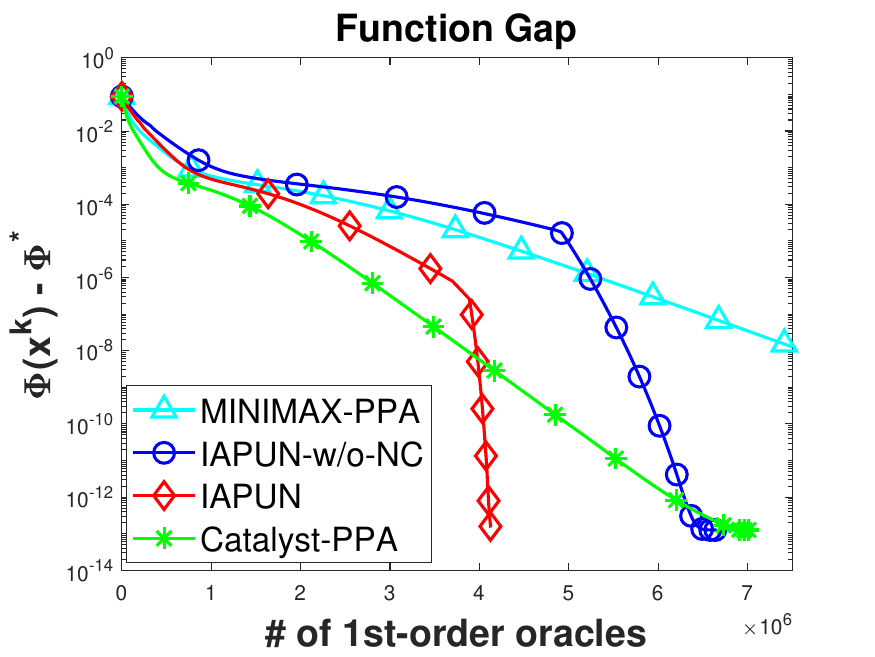}\!\!\!
\end{subfigure}

\vspace{0.2cm} 
\hspace{-0.05cm}\begin{subfigure}{.25\linewidth}
  \includegraphics[width=\linewidth]{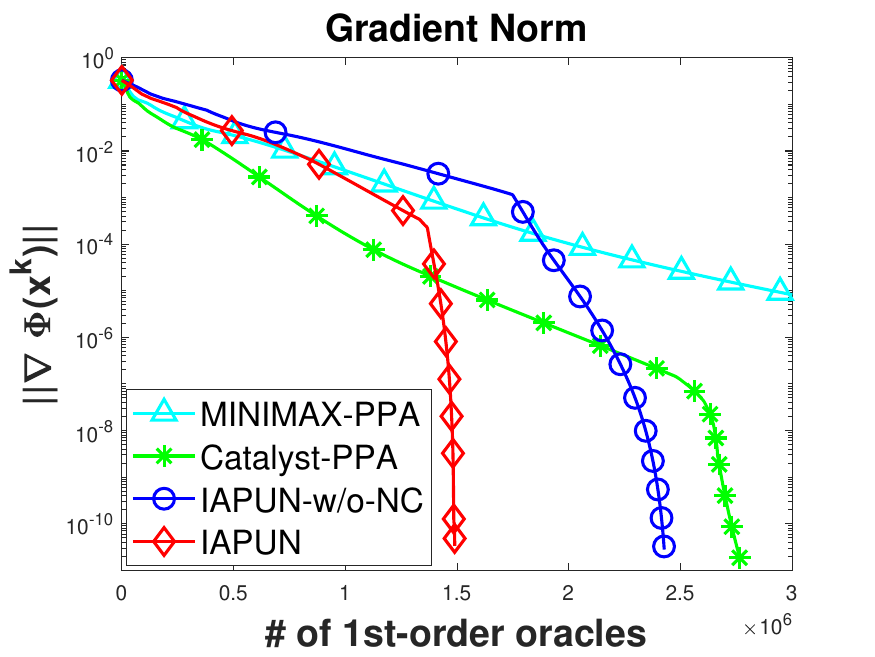} 
\end{subfigure}\hspace{-0.35cm}
\begin{subfigure}{.25\linewidth}
  \includegraphics[width=\linewidth]{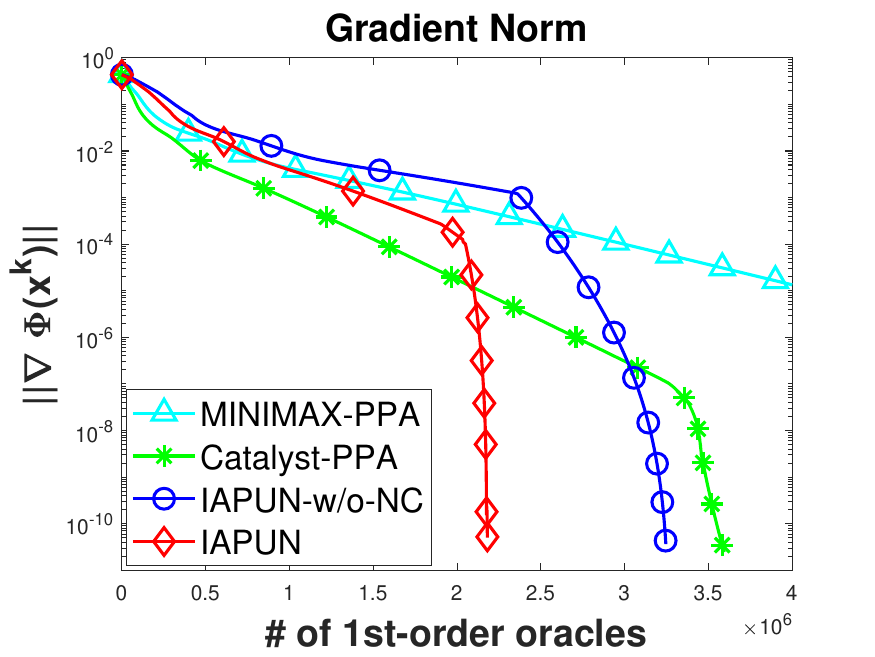}
\end{subfigure}\hspace{-0.35cm}
\begin{subfigure}{.25\linewidth}
  \includegraphics[width=\linewidth]{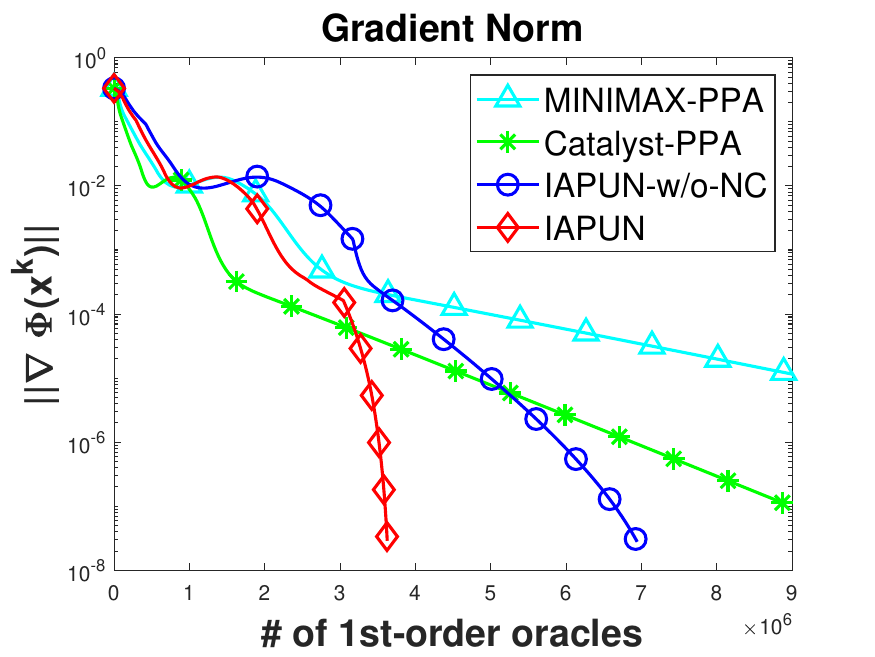}
\end{subfigure}\hspace{-0.35cm}
\begin{subfigure}{.25\linewidth}
  \includegraphics[width=\linewidth]{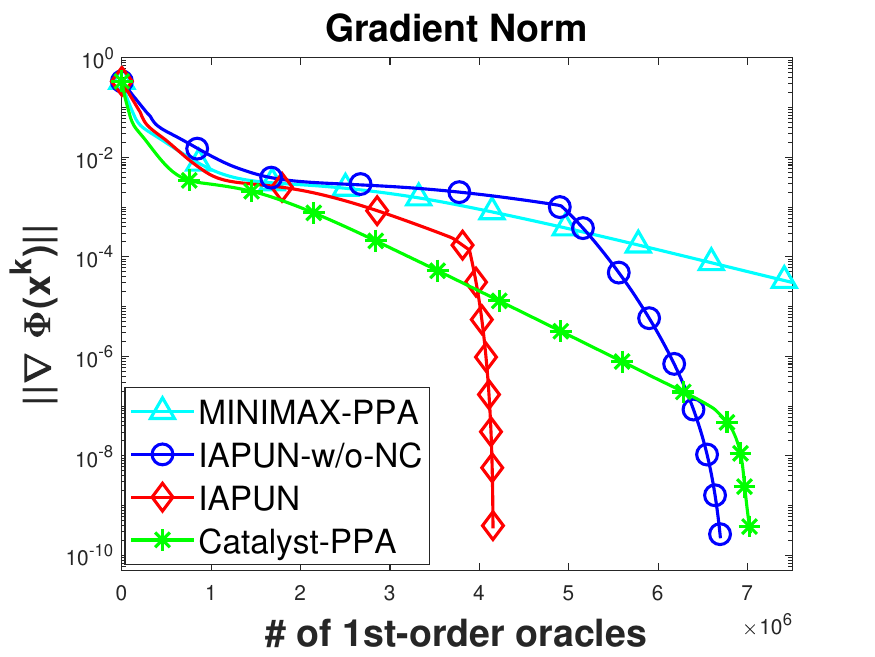}
\end{subfigure}\vspace{-0.05cm}
\caption{Experimental results for robust non-linear regression with adversarial perturbation for 4 randomly synthesized instances, each column contains the function gap and gradient norm of a randomly synthesized instance.  }
\label{fig:RobustReg}\vspace{-0.4cm}
\end{figure}

\noindent We compare our algorithm with the MINIMAX-PPA \cite{lin2020near} method, which is the optimal first-order algorithm that does not exploit higher-order smoothness. In addition, we also implement the Catalyst-PPA \cite{zhang2021complexity} as a second benchmark, which is a Catalyst acceleration of the MINIMAX-PPA algorithm that reduces a few poly-logarithmic factors in the complexity. Besides the IAPUN method, we also implement a variant of this method that does exploit NC-pairs. Following \cite{carmon2017convex}, we set $d=30$, $N=60$, and we generate $x_i\sim \mathcal{N}(0,I_d)$ and $y_i\sim\mathcal{N}(0,1)$. All methods are initialized at 0 and all the PPA subproblems are properly warm started. Numerical experiments under several randomly generated datasets are presented in Figure \ref{fig:RobustReg}.

In the experimental results, it can be observed that the IAPUN algorithm and its variant without NC-pair exploitation perform better in the high-accuracy regime while the MINIMAX-PPA and Catalyst-PPA methods are often more efficient in the low accuracy regime. \vspace{-0.3cm}




\bibliographystyle{alpha}
\bibliography{main}
\end{document}